\documentclass[12pt,reqno]{amsart}
\usepackage[margin=1in]{geometry}
\makeatletter
\def\section{\@startsection{section}{1}%
	\z@{.7\linespacing\@plus\linespacing}{.5\linespacing}%
	{\bfseries
		\centering
}}
\def\@secnumfont{\bfseries}
\makeatother
\usepackage{amsmath,amssymb,amsthm,graphicx,amsxtra, setspace}
\usepackage[utf8]{inputenc}
\usepackage{charter}
\usepackage{mathrsfs}
\usepackage{alltt}
\usepackage{relsize}
\usepackage{hyperref}
\usepackage{aliascnt}
\usepackage{tikz}
\usepackage{mathtools}
\usepackage{multicol}
\usepackage{upgreek}
\usepackage{graphicx,type1cm,eso-pic,color}
\allowdisplaybreaks
\usepackage{scalerel,stackengine}
\stackMath
\newcommand\reallywidehat[1]{%
	\savestack{\tmpbox}{\stretchto{%
			\scaleto{%
				\scalerel*[\widthof{\ensuremath{#1}}]{\kern-.6pt\bigwedge\kern-.6pt}%
				{\rule[-\textheight/2]{1ex}{\textheight}}
			}{\textheight}%
		}{0.5ex}}%
	\stackon[1pt]{#1}{\tmpbox}%
}
\parskip 1ex

\numberwithin{equation}{section}

\newtheorem{theorem}{Theorem}[section]

\newaliascnt{lemma}{theorem}
\newtheorem{lemma}[lemma]{Lemma}
\aliascntresetthe{lemma} 

\newaliascnt{proposition}{theorem}

\aliascntresetthe{proposition}

\newaliascnt{assumption}{theorem}

\aliascntresetthe{assumption}

\newaliascnt{corollary}{theorem}

\aliascntresetthe{corollary}

\newaliascnt{definition}{theorem}
\newtheorem{definition}[definition]{Definition}
\aliascntresetthe{definition}

\newaliascnt{example}{theorem}

\aliascntresetthe{example}

\newaliascnt{remark}{theorem}
\newtheorem{remark}[remark]{Remark}
\aliascntresetthe{remark}

\newaliascnt{hypothesis}{theorem}

\aliascntresetthe{hypothesis}

\newaliascnt{property}{theorem}

\aliascntresetthe{property}

\let\originalleft\left
\let\originalright\right
\renewcommand{\left}{\mathopen{}\mathclose\bgroup\originalleft}
\renewcommand{\right}{\aftergroup\egroup\originalright}

\makeatletter
\newcommand{\doublewidetilde}[1]{{%
		\mathpalette\double@widetilde{#1}%
}}
\newcommand{\double@widetilde}[2]{%
	\sbox\z@{$\m@th#1\widetilde{#2}$}%
	\ht\z@=.9\ht\z@
	\widetilde{\box\z@}%
}
\makeatother


\renewcommand{\d}{\/\mathrm{d}\/}

\def\w{\textbf{W}^{\varepsilon}_{{\theta}^{\varepsilon}}}

\def\L{\mathrm{L}}

\def\F{\mathrm{F}}
\def\C{\mathrm{C}}

\def\h{\mathbf{h}}
\def\J{\mathrm{J}}

\def\D{\mathrm{D}}
\def\y{\mathbf{y}}

\def\z{\mathbf{z}}
\def\v{\mathbf{v}}
\def\V{\mathbb{V}}
\def\w{\mathbf{w}}
\def\W{\mathrm{W}}
\def\G{\mathbb{G}}

\def\no{\nonumber}
\def\V{\mathbb{V}}

\def\U{\mathrm{U}}

\def\u{\mathbf{u}}
\def\H{\mathbb{H}}
\def\n{\mathbf{n}}

\newcommand{\R}{\mathbb{R}}

\renewcommand{\d}{\/\mathrm{d}\/}


\newcommand{\Addresses}{{
		\footnote{

			\noindent \textsuperscript{1}School of Mathematics, Indian Institute of Science Education and Research, Trivandrum (IISER-TVM),
			Maruthamala PO, Vithura, Thiruvananthapuram, Kerala, 695 551, INDIA  \par\nopagebreak \noindent
			\textit{e-mail:} \texttt{sheetal@iisertvm.ac.in}, *Corresponding Author

			\noindent \textsuperscript{2}School of Mathematics,
			Indian Institute of Science Education and Research, Trivandrum (IISER-TVM),
			Maruthamala PO, Vithura, Thiruvananthapuram, Kerala, 695 551, INDIA.  \par\nopagebreak \noindent
			\textit{e-mail:} \texttt{plnmn915@iisertvm.ac.in}

			
}}}

\begin{document}
\title{Nonlocal Cahn-Hilliard-Brinkman System with regular potential: Regularity and optimal control
\Addresses	}

       
	\author[S. Dharmatti and P. L. N. Mahendranath]
	{Sheetal Dharmatti\textsuperscript{1*} and P. L. N. Mahendranath\textsuperscript{2}}

\begin{abstract}
In this paper we study optimal control  problem for non local Cahn-Hilliard-Brinkman system which models  phase separation of binary fluids in porous media. We consider the system in two dimensional bounded domain with regular potential.  We extend  recently proved existence of weak solution results for such a system  and prove the existence of strong solution under certain assumptions on the forcing term and initial datum. Further using our regularity results, we study the tracking type optimal control problem. We prove the existence of an optimal control and establish the first order optimality condition. Lastly, we characterize optimal control in terms of the solution of corresponding adjoint system. The existence of solution for the adjoint system is also established. 
\end{abstract}

\keywords{Brinkman equation, Cahn-Hilliard equation, strong solution, optimal control, nonlocal models, phase separation.}

\subjclass{35D35, 35Q35, 49J20, 49J50, 49K20, 76S05, 76T99.}

\maketitle
\section{Introduction} 
The Brinkman equation was proposed in \cite{Brinkman} by H. C. Brinkman. It is a modified Darcy's law to describe the flow through porous media. In recent studies, a diffuse interface variant of the Brinkman equation is proposed to model the phase separation of the incompressible binary fluids in a porous medium. The idea is to couple the Brinkman equation with the Cahn-Hilliard equation, which describes the phase separation phenomenon. Let $\Omega \subset \mathbb{R}^2$ be a bounded smooth domain with boundary $\partial \Omega$. Consider the Non local Cahn-Hilliard-Brinkman (CHB) system (see \cite{CHB weak}) given by,
\begin{align} \label{60}
\varphi_t + \nabla \cdot(\u \varphi) = \Delta \mu, &\quad \text{in} \quad \Omega \times (0,T),\\
\mu = a \varphi-\J * \varphi + \F'(\varphi), &\quad \text{in} \quad \Omega \times (0,T), \\
- \nabla \cdot (\nu (\varphi) \nabla \u) + \eta \u + \nabla \pi  = \mu \nabla \varphi + \h, &\quad \text{in} \quad \Omega \times (0,T),\\
\mathrm{div}(\u) =0,&\quad \text{in} \quad  \Omega \times (0,T),
\end{align}
We endow this system with following boundary and initial conditions,
\begin{align}
\frac{\partial \mu}{\partial \n} &=0, \ \mathrm{on} \ \partial \Omega,\\
\u &=0, \ \mathrm{on} \ \partial \Omega,\\
\varphi(0) &= \varphi_0, \ \mathrm{on} \  \Omega, \label{61}
\end{align}
where $\varphi$ denotes difference in concentrations of the two fluids, and $\u$ is the average fluid velocity. The viscosity coefficient, which may depend on $\varphi$ is denoted by $\nu >0$,  permeability is denoted by $\eta >0$ and $\pi$ is the pressure exerted on the fluid. Let $\J:\mathbb{R}^d \rightarrow \mathbb{R} $ be a suitable interaction kernel, $a(x) = \int_\Omega \J(x-y) \d y$ and the spatial convolution $\J*\varphi$ be defined by 
\begin{align*}
(\J*\varphi)(x):= \int_\Omega \J(x-y)\varphi (y) dy, \quad x \in \Omega.
\end{align*} 
The above system is called nonlocal because of the presence of the $\J$ term. The external forcing is denoted by $\h$, and $\F$ is a double-well potential accounting for phase separation, which can be singular (typically logarithmic potential) or regular (e.g., $\F (s) = (s^2-1)^2$). In this paper, we consider only the case of regular potential. If $\nu=0$, the system \eqref{60}-\eqref{61} becomes the so-called Cahn-Hilliard-Hele-Shaw system (or also referred to as Cahn-Hilliard-Darcy system in the context of a multi-phase fluid mixture in nonporous medium) (see \cite{CHHS sing}) and is used in modeling tumor growth dynamics. There is a surge of papers in recent years that study existence, uniqueness, numerics, and optimal control problems for Cahn-Hilliard-Darcy  system (see \cite{CHHS singnonlocal, CHDS num,CHD fem,on CHD,CHD op}). 

The local Cahn-Hilliard-Brinkman system is obtained by replacing $\mu$ equation in \eqref{60}-\eqref{61} by $ \mu = -\Delta \varphi + \F'(\varphi)$.  Well-posedness and some convergence results for the local Cahn-Hilliard-Brinkman system have been studied in \cite{CHB local} with regular potential. For the local system, the optimal control problem is studied in \cite{localBrinkman} and some numerical results  in \cite{CHB num}.

From the physical viewpoint, the nonlocal Cahn-Hilliard (CH) equation can be justified (see \cite{CH phy1,CH phy2,CH phy3}). The nonlocal CH equation has been analyzed theoretically and numerically in (e.g., \cite{CH Han, CH1,CH2, CH num}) under various assumptions on the potential $\F$. The nonlocal version of the Cahn Hilliard equation coupled with the Navier-Stokes system has been studied recently. For example results about existence of weak solution, strong solution, long time behaviour and optimal control problems are studied  in \cite{CHNS weak, CHNS strong, unique, CHNS traj, CHNS op}. 

The coupled nonlocal Cahn Hilliard Brinkman system  \eqref{60}-\eqref{61}  is studied recently, for existence and  uniqueness results  in \cite{CHB weak} in the case of dimension 2 and 3. We are interested in studying the optimal control problems related to the Cahn-Hilliard-Brinkman system. However, such results are not available in the literature as an optimal control problem requires a higher regularity of the solutions. In this work, we first address this issue. 
It has been challenging to prove regularity results for the nonlocal Cahn-Hilliard-Brinkman system. We prove the existence of strong solution for the Cahn-Hilliard-Brinkman system in two dimension. We follow the work of \cite{CHNS strong}  for the existence and uniqueness of strong solution results for the Cahn-Hilliard-Navier-Stokes system and results of \cite{CHB weak} to obtain these results.  We employ this regularity to further study the optimal control problem for the Cahn-Hilliard-Brinkman system. To the best of our knowledge, such a result is not available in the literature to date. Our aim is prove the existence of optimal control and finally to characterize it in terms of adjoint variables.

The structure of the paper is as follows. In the next section, we recall the existence results of the system \eqref{60}-\eqref{61} obtained in \cite{CHB weak}.  Further we  consider the system \eqref{60}-\eqref{61} with constant viscosity $\nu$ and $\eta=1$. We prove the existence of a  strong solution and obtain corresponding difference estimates. In section \ref{linear}, we prove three important results: the existence of the optimal control, the existence of a solution for the linearised system, and the differentiability of control to the state operator. In section \ref{adjoint} we derive the first-order necessary optimality condition. We further study the existence of a solution for the adjoint system. Finally, we characterize the optimal control in terms of the adjoint variable.

\section{ Existence of Strong Solution}
\subsection{Functional setting and Preliminary results}

We first explain the  functional spaces needed to obtain our main results. Let us define
\begin{align*}
\G_{\text{div}} &:= \left\{ \u \in L^2(\Omega;\R^n) : \text{div}(\u)=0,\  \u\cdot \mathbf{n} \big|_{\partial \Omega}=0   \right\}, \\
\V_{\text{div}} &:= \left\{\u \in H^1_0(\Omega;\R^n): \text{div}(\u)=0\right\},\\ 
H&:=L^2(\Omega;\R),\ V:=H^1(\Omega;\R),
\end{align*}
where $n=2,3$. Let us denote $\| \cdot \|$ and $(\cdot, \cdot)$, the norm and the scalar product, respectively, on both $\mathrm{H}$ and $\G_{\text{div}}$. The duality between any Hilbert space $X$ and its dual $X'$ is denoted by $_{X'}\langle \cdot,\cdot\rangle_{X}$. We know that $\V_{\text{div}}$ is endowed with the scalar product 
$$(\u,\v)_{\V_{\text{div}} }= (\nabla \u, \nabla \v)=2(\mathrm{D}\u,\mathrm{D}\v),\ \text{ for all }\ \u,\v\in\V_{\text{div}}.$$ The norm on $\V_{\text{div}}$ is given by $\|\u\|_{\V_{\text{div}}}^2:=\int_{\Omega}|\nabla\u(x)|^2\d x=\|\nabla\u\|^2$. For every $f \in V'$, we denote $\overline{f}$ the average of $f$ over $\Omega$, i.e., $\overline{f} := |\Omega|^{-1} {_{V'}}\langle f, 1 \rangle_{V}=|\Omega|^{-1}\int_{\Omega}f(x)\d x$. 
Let us also define the operator $\mathcal{B}$, an unbounded linear operator on $H$ with
domain $\D(\mathcal{B}) = \left\{v \in \mathrm{H}^2(\Omega) : \frac{\partial v}{\partial\mathbf{n}}= 0\text{ on }\partial\Omega \right\}$. 
%
We state below some estimates which will be used in this paper. 
\begin{lemma}[Sobolev inequality]
For $v \in H^{s}(\mathbb R^{n})$, we have
\[ \|v\|_{L^{q}(\mathbb R^{n})} \leq C_{n, s, q} \|v\|_{H^s(\mathbb R^{n})}\]
provided that q lies in the following range
\begin{enumerate}
\item[(i)] if $s < n/2$, then $2 \leq q \leq \frac{2n}{n-2s}.$
\item[(ii)] if $s = n/2$, then $2 \leq q < \infty$.
\item[(iii)] if $s > n/2$, then $2 \leq q \leq \infty$.
\end{enumerate}
\end{lemma}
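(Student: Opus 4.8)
The plan is to pass to the Fourier side and use the Bessel potential representation. Writing $\langle\xi\rangle := (1+|\xi|^{2})^{1/2}$, the condition $v\in H^{s}(\mathbb{R}^{n})$ is equivalent to $g := \langle\xi\rangle^{s}\,\widehat{v}\in L^{2}(\mathbb{R}^{n})$ with $\|g\|_{L^{2}} = \|v\|_{H^{s}}$, so that $v = G_{s}*f$ where $f := \mathcal{F}^{-1}g$ satisfies $\|f\|_{L^{2}} = \|v\|_{H^{s}}$ and $G_{s} := \mathcal{F}^{-1}\bigl[\langle\xi\rangle^{-s}\bigr]$ is the Bessel kernel. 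First I would record the standard properties of $G_{s}$, most easily obtained from the subordination identity $G_{s}(x) = c_{n,s}\int_{0}^{\infty} t^{(s-n)/2-1} e^{-t}\, e^{-|x|^{2}/(4t)}\,dt$: the kernel $G_{s}$ is positive and radially decreasing, satisfies $G_{s}(x)\le C|x|^{s-n}$ for all $x$, decays exponentially as $|x|\to\infty$, and belongs to $L^{r}(\mathbb{R}^{n})$ for every $1\le r< n/(n-s)$.

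For case (iii), $s>n/2$, Cauchy--Schwarz gives $\|v\|_{L^{\infty}} \le \|\widehat{v}\|_{L^{1}} \le \|g\|_{L^{2}}\bigl(\int_{\mathbb{R}^{n}}\langle\xi\rangle^{-2s}\,d\xi\bigr)^{1/2} = C_{n,s}\|v\|_{H^{s}}$, the integral being finite precisely because $2s>n$; together with the trivial bound $\|v\|_{L^{2}}\le\|v\|_{H^{s}}$ and the log-convexity inequality $\|v\|_{L^{q}}\le\|v\|_{L^{2}}^{1-\theta}\|v\|_{L^{\infty}}^{\theta}$ (with $1/q=(1-\theta)/2$, $\theta\in[0,1]$) this yields the full range $2\le q\le\infty$.

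For case (i), $s<n/2$, I would use $|v(x)|\le (G_{s}*|f|)(x)\le C\,I_{s}(|f|)(x)$, where $I_{s}$ is the Riesz potential of order $s$, obtained from the pointwise bound on $G_{s}$. The Hardy--Littlewood--Sobolev inequality then gives $\|I_{s}(|f|)\|_{L^{q}}\le C\|f\|_{L^{2}}$ exactly for the conjugate relation $1/q = 1/2 - s/n$, i.e.\ $q = 2n/(n-2s)$; combining this with $\|v\|_{L^{2}}\le\|v\|_{H^{s}}$ and interpolation between $L^{2}$ and $L^{2n/(n-2s)}$ covers $2\le q\le 2n/(n-2s)$. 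Case (ii), $s=n/2$, then reduces to (i): for finite $q\ge 2$ pick $t\in[0,n/2)$ with $1/q=1/2-t/n$; since $t<s$ we have $\|v\|_{H^{t}}\le\|v\|_{H^{s}}$, so (i) applied with the exponent $t$ gives the estimate (the case $q=2$ being immediate).

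The main obstacle is case (i). Cases (ii) and (iii) are essentially Cauchy--Schwarz plus interpolation, but the endpoint exponent $q=2n/(n-2s)$ cannot be reached by applying Young's convolution inequality to $G_{s}$: that would require $G_{s}\in L^{n/(n-s)}$, which fails precisely at this critical index because of the $|x|^{s-n}$ singularity at the origin. Reaching the endpoint genuinely needs the weak-type bound for $I_{s}$ and Marcinkiewicz interpolation, i.e.\ the full Hardy--Littlewood--Sobolev theorem; establishing the kernel estimates for $G_{s}$ from the subordination formula is the other (routine) technical ingredient. If the open range $2\le q<2n/(n-2s)$ were enough for the applications, Young's inequality alone would suffice and the argument would be elementary, but I would include the endpoint via Hardy--Littlewood--Sobolev to match the statement as given.
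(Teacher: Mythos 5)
The paper does not prove this lemma at all: it is recorded as standard background (a classical Sobolev embedding) and used later without justification, so there is no in-paper argument to compare yours against. Your proposed proof is correct and is the standard route: writing $v=G_{s}*f$ with $\|f\|_{L^{2}}=\|v\|_{H^{s}}$, using Cauchy--Schwarz on the Fourier side for $s>n/2$ (where $\int\langle\xi\rangle^{-2s}\,d\xi<\infty$), dominating $G_{s}$ by the Riesz kernel $|x|^{s-n}$ and invoking Hardy--Littlewood--Sobolev for the critical exponent $q=2n/(n-2s)$ when $s<n/2$, and reducing $s=n/2$ to the subcritical case via $\|v\|_{H^{t}}\le\|v\|_{H^{s}}$ for $t<s$, with interpolation of $L^{p}$ norms filling in the intermediate exponents in each case. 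Your remark about where the genuine difficulty sits is also accurate: Young's inequality misses the endpoint because $G_{s}\notin L^{n/(n-s)}$, so the weak-type bound for $I_{s}$ plus Marcinkiewicz interpolation (i.e.\ the full HLS theorem) is really needed for $q=2n/(n-2s)$; and indeed the constant in case (ii) necessarily degenerates as $q\to\infty$, consistent with the strict inequality $q<\infty$ there. In the context of this paper the lemma is only ever applied on a bounded domain in low dimension, so an extension-operator reduction to $\mathbb{R}^{n}$ would be the one additional (routine) remark needed to use your proof where the paper actually invokes the inequality.
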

\begin{lemma}\cite{nirenberg}
Let $u \in L^q(\mathbb{R}^n)$ and its derivatives of order $m$, $D^m u \in L^r(\mathbb{R}^n)$, $1 \leq q,r \leq \infty$. For the derivatives $D^j u, 0 \leq j <m$, the following inequalities hold
\begin{align}\label{gag}
\|D^j u\| _{L^p(\mathbb{R}^n)} \leq C \| D^m u\|^\theta_{L^r(\mathbb{R}^n)} \| u \|^{1-\theta}_{L^q(\mathbb{R}^n)}
\end{align}
where, 
\begin{align*}
\frac{1}{p} = \frac{j}{n} + \theta \left( \frac{1}{r}-\frac{m}{n} \right) + (1-\theta ) \frac{1}{q}
\end{align*}
for all $\theta$ in the interval $\frac{j}{m} \leq \theta \leq 1$. The constant $C$ depends only on $n,m,j,q,r,\theta$, with the following exceptional cases
\begin{enumerate}
\item If $j=0, \ rm < n, \ q = \infty$ then we make the additional assumption that either $u$ tends to zero at infinity or $u \in L^{\tilde{q}}$ for some finite $\tilde{q} >0 $
\item If $1 < r < \infty$, and $m-j-\frac{m}{r}$ is a non negative integer then \eqref{gag} holds only for a satisfying $\frac{j}{m} \leq \theta <1$.
\end{enumerate}
If we consider the smooth bounded domain $\Omega \subset \mathbb{R}^n$ then \eqref{gag} becomes 
\begin{align*} 
\|D^j u\| _{L^p(\Omega)} \leq C \| D^m u\|^\theta_{L^r(\Omega)} \| u \|^{1-\theta}_{L^q(\Omega)} + \|u\|_{L^s(\Omega)}
\end{align*}
for some $s>0$. In a particular case, for $p=\infty, m=2, j=0, n=2, r=q=2$ we get 
\begin{align}
\|u\|_{L^\infty} \leq C_\Omega \|u\|^{1/2} \|u\|^{1/2}_{H^2} \quad u \in H^2(\Omega).
\end{align}
\end{lemma}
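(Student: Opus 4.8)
Since this lemma is quoted verbatim from \cite{nirenberg}, my plan is to recall the skeleton of the classical argument over $\mathbb{R}^n$, transfer it to the smooth bounded domain $\Omega$, and then read off the stated two‑dimensional special case. Over $\mathbb{R}^n$ the inequality \eqref{gag} rests on three ingredients. First, the base case $m=1$: for $u\in C_c^\infty(\mathbb{R}^n)$ one writes $u$ (or a power of it) as the line integral of one of its first partials along each coordinate direction and applies Hölder's inequality in the remaining variables — the Gagliardo--Nirenberg--Sobolev device — which yields the estimate with $m=1$ for $\theta$ in the admissible range. Second, a dilation argument $u\mapsto u(\lambda\,\cdot)$ shows that the displayed exponent identity relating $p,q,r,j,m,n,\theta$ is \emph{forced}, and conversely that this identity makes the inequality scale invariant, which is why a purely dimensional constant $C$ suffices. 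Third, one bootstraps to general $m$ by applying the $m=1$ case to the derivatives $D^{m-1}u$ and iterating. The two exceptional cases are exactly the configurations where the relevant endpoint embedding degenerates (the $L^\infty$ endpoint with $rm<n$, respectively the case where $m-j-m/r$ is a nonnegative integer), forcing either the extra integrability hypothesis or the restriction $\theta<1$; a density argument then extends the estimate to the asserted function classes.

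To pass to a smooth bounded $\Omega\subset\mathbb{R}^n$, I would use a bounded linear (Stein/Calderón) extension operator $E\colon W^{m,r}(\Omega)\to W^{m,r}(\mathbb{R}^n)$ which is simultaneously bounded on the relevant Lebesgue spaces. Applying the whole‑space inequality to $Eu$ and restricting to $\Omega$ gives
\[
\|D^j u\|_{L^p(\Omega)}\le \|D^j(Eu)\|_{L^p(\mathbb{R}^n)}\le C\,\|D^m(Eu)\|_{L^r(\mathbb{R}^n)}^{\theta}\,\|Eu\|_{L^q(\mathbb{R}^n)}^{1-\theta}\le C\,\|u\|_{W^{m,r}(\Omega)}^{\theta}\,\|u\|_{L^q(\Omega)}^{1-\theta}.
\]
The one remaining point is to replace the full norm $\|u\|_{W^{m,r}(\Omega)}$ by the top‑order seminorm $\|D^m u\|_{L^r(\Omega)}$: each intermediate derivative $D^k u$, $0<k<m$, obeys an Ehrling‑type bound $\|D^k u\|_{L^r(\Omega)}\le \delta\|D^m u\|_{L^r(\Omega)}+C_\delta\|u\|_{L^s(\Omega)}$, and absorbing these is precisely what produces the additive lower‑order term $\|u\|_{L^s(\Omega)}$ in the domain version of \eqref{gag}.

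Finally, the quoted special case follows by substitution. With $p=\infty$, $m=2$, $j=0$, $n=2$, $r=q=2$, the exponent identity $\frac1p=\frac jn+\theta\big(\frac1r-\frac mn\big)+(1-\theta)\frac1q$ collapses to $0=-\tfrac{\theta}{2}+\tfrac12(1-\theta)$, hence $\theta=\tfrac12$. Here one must check exceptional case (2): since $m-j-\tfrac mr=2-0-1=1$ is a nonnegative integer and $1<r<\infty$, the inequality is only guaranteed for $\theta<1$ — but we need $\theta=\tfrac12<1$, so it applies. This gives $\|u\|_{L^\infty(\Omega)}\le C\,\|D^2u\|_{L^2(\Omega)}^{1/2}\|u\|^{1/2}+\|u\|_{L^s(\Omega)}$ for some $s>0$; choosing $s=2$, using $\|D^2u\|_{L^2(\Omega)}\le\|u\|_{H^2}$ and the trivial bound $\|u\|=\|u\|^{1/2}\|u\|^{1/2}\le\|u\|^{1/2}\|u\|_{H^2}^{1/2}$, both terms are dominated by $C_\Omega\|u\|^{1/2}\|u\|_{H^2}^{1/2}$, as claimed. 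The only genuinely delicate points are confirming that the $\theta=\tfrac12$ endpoint is not excluded by the exceptional cases and tracking the lower‑order terms through the extension step; the rest is classical.
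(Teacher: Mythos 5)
The paper states this lemma without proof, simply citing Nirenberg, so there is no internal argument to compare against; your sketch is a faithful outline of the classical proof (base case $m=1$ via the Gagliardo--Nirenberg--Sobolev device, scaling to force the exponent identity, induction on $m$, extension operator plus Ehrling interpolation for the bounded-domain version). Your verification of the special case is also correct: the exponent identity gives $\theta=\tfrac12$, exceptional case (2) applies since $m-j-\tfrac{m}{r}=1$ but only excludes $\theta=1$, and absorbing the lower-order term with $s=2$ via $\|u\|\le\|u\|^{1/2}\|u\|_{H^2}^{1/2}$ yields the stated two-dimensional estimate.
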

\begin{lemma}[Ladyzhenskaya Inequality]\label{lady}
	For $\u\in\ \C_0^{\infty}(\Omega;\R^n), n = 2, 3$, there exists a constant $C$ such that
	\begin{equation}
	\|\u\|_{\mathbb{L}^4}\leq C^{1/4}\|\u\|^{1-\frac{n}{4}}\|\nabla\u\|^{\frac{n}{4}},\text{ for } n=2,3,
	\end{equation}
	where $C=2,4$ for $n=2,3$ respectively. Using Poincaré's inequality we can deduce that for $n=2,3$,
	\begin{align}\label{poin}
	\|\u\|_{\mathbb{L}^4} \leq C_\Omega \|\nabla \u\| .
	\end{align}
\end{lemma}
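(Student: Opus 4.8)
The plan is to establish the multiplicative inequality first on $\R^n$ and then deduce \eqref{poin} on $\Omega$. Since $\u\in\C_0^\infty(\Omega;\R^n)$ has compact support, its extension by zero belongs to $\C_0^\infty(\R^n;\R^n)$ and leaves the $\mathbb{L}^2$ and $\mathbb{L}^4$ norms unchanged, so it suffices to prove $\|\u\|_{\mathbb{L}^4}\le C^{1/4}\|\u\|^{1-n/4}\|\nabla\u\|^{n/4}$ for smooth compactly supported $\u$ on $\R^n$, $n=2,3$. In fact this is exactly the Gagliardo--Nirenberg inequality of the preceding lemma applied to the scalar function $w=|\u|$ with $j=0$, $m=1$, $p=4$, $r=q=2$: the exponent relation then forces $\theta=n/4$, neither exceptional case of that lemma occurs here (since $q=2\neq\infty$ and $m-j-m/r=\tfrac12$ is not a nonnegative integer), and the clean $\R^n$ form without the lower-order remainder is available because $w$ has compact support; one only needs the a.e.\ bound $|\nabla|\u||\le|\nabla\u|$, which holds by the chain rule and Cauchy--Schwarz. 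This already gives the stated inequality with \emph{some} constant $C=C(n)$, and it then remains to pin down the values $C=2$ for $n=2$ and $C=4$ for $n=3$.

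For $n=2$ I would run the classical slicing proof explicitly. From the fundamental theorem of calculus applied in each coordinate and the bound $|\partial_i|\u|^2|\le2|\u|\,|\partial_i\u|$ one gets $|\u(x)|^2\le2\int_{\R}|\u|\,|\partial_1\u|\,\d y_1$ and $|\u(x)|^2\le2\int_{\R}|\u|\,|\partial_2\u|\,\d y_2$ along the two coordinate lines through $x$; multiplying these, integrating $|\u|^4=|\u|^2\cdot|\u|^2$ over $\R^2$, and applying Fubini and then Cauchy--Schwarz gives $\|\u\|_{\mathbb{L}^4}^4\le4\|\u\|^2\,\|\partial_1\u\|\,\|\partial_2\u\|$, and hence $\|\u\|_{\mathbb{L}^4}^4\le2\|\u\|^2\|\nabla\u\|^2$ via $\|\partial_1\u\|\,\|\partial_2\u\|\le\tfrac12\|\nabla\u\|^2$. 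For $n=3$ the two-slice estimate produces the wrong exponent, so I would instead interpolate $\mathbb{L}^4$ between $\mathbb{L}^2$ and $\mathbb{L}^6$ (using $\tfrac14=\tfrac{1/4}{2}+\tfrac{3/4}{6}$), getting $\|\u\|_{\mathbb{L}^4}\le\|\u\|^{1/4}\|\u\|_{\mathbb{L}^6}^{3/4}$, and then invoke the critical Sobolev embedding --- the Sobolev inequality above in the case $s<n/2$ with $s=1$, $n=3$, $q=6$, taken in its homogeneous form $\|\u\|_{\mathbb{L}^6(\R^3)}\le C_S\|\nabla\u\|$ (valid for compactly supported fields, and obtained from the stated $H^1$ bound by letting the dilation parameter tend to infinity) --- to reach $\|\u\|_{\mathbb{L}^4}^4\le C_S^3\|\u\|\,\|\nabla\u\|^3$ with $C_S^3\le4$.

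Finally, returning to $\Omega$ is immediate, and since $\u$ has zero boundary values Poincaré's inequality $\|\u\|\le C_P\|\nabla\u\|$ inserted into the multiplicative bound (legitimate as $1-n/4>0$ for $n=2,3$) yields $\|\u\|_{\mathbb{L}^4}\le C_\Omega\|\nabla\u\|$ with $C_\Omega=C^{1/4}C_P^{1-n/4}$, which is \eqref{poin}. A density argument --- $\C_0^\infty(\Omega;\R^n)$ is dense in $H^1_0(\Omega;\R^n)\supseteq\V_{\text{div}}$, and the $\mathbb{L}^4$, $\mathbb{L}^2$ and gradient norms are all continuous for $H^1$ convergence in dimensions $2$ and $3$ --- then extends both inequalities to $\u\in\V_{\text{div}}$ and $\u\in H^1_0(\Omega;\R^n)$, the form used later. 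I do not anticipate a genuine obstacle: the whole argument is classical, and the only points needing care are the vector-valued chain-rule estimate and the circumstance that the slicing proof giving the sharp constant $2$ in two dimensions does not carry over verbatim to three dimensions, where the interpolation-plus-Sobolev detour (or, equivalently, slicing combined with the one-dimensional bound $\|f\|_{L^\infty(\R)}^2\le2\|f\|_{L^2(\R)}\|f'\|_{L^2(\R)}$ in the remaining two variables) is required and the resulting constant must be checked to be at most $4$.
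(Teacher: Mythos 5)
Your proof is correct, but note that the paper itself offers no proof of this lemma at all: it is stated as a classical fact (Ladyzhenskaya's inequality) and used as a black box, so there is nothing in the source to compare against step by step. Your argument is the standard one and all the pieces fit: the reduction to $|\u|^2$ via $|\partial_i|\u|^2|\le 2|\u|\,|\partial_i\u|$ avoids any smoothness issue for $|\u|$, the two-dimensional slicing computation does yield $\|\u\|_{\mathbb{L}^4}^4\le 4\|\u\|^2\|\partial_1\u\|\,\|\partial_2\u\|\le 2\|\u\|^2\|\nabla\u\|^2$, the interpolation exponent $\theta=1/4$ between $\mathbb{L}^2$ and $\mathbb{L}^6$ is right, the scaling argument recovering the homogeneous Sobolev bound is legitimate, and the Poincar\'e and density steps are routine. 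The one soft spot is the three-dimensional constant: your route delivers $C=C_S^3$ and you merely assert $C_S^3\le 4$, which is true (the sharp $\dot H^1(\R^3)\hookrightarrow L^6$ constant is about $0.43$, so $C_S^3\approx 0.08$) but is left as a claim to be checked against a number you do not compute. If you want the constant $4$ without importing the value of the Sobolev constant, run the slicing argument in three dimensions as well: bound $\sup_{x_3}|\u|^2$ by $2\int|\u||\partial_3\u|\,dx_3$ and apply the two-dimensional estimate in $(x_1,x_2)$ to the remaining factor; this is Ladyzhenskaya's original derivation and produces $\|\u\|_{\mathbb{L}^4}^4\le 4\|\u\|\,\|\nabla\u\|^3$ directly. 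Either way the lemma as stated is established.
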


We need following assumptions to deduce well-posedness of Cahn Hilliard Brinkman system under consideration:

\begin{itemize}
\item[(\textbf{H1})] ~~ Let $\Omega \subset \mathbb{R}^d, d = 2,3,$ be an open, bounded and connected  domain with a smooth boundary. 
\item[(\textbf{H2})] ~~ $\J \in W^{1,1} (\mathbb{R}^d)$ satisfies $ \J(x) = \J(-x),$ and
\begin{align*}
a(x) := \int_\Omega \J(x-y) \d x \geq 0 , \ a.e. \ x \in \Omega.
\end{align*}
\item[(\textbf{H3})] ~~$\F \in C^{2,1}_{loc} (\mathbb{R})$ and there exists $c_0 > 0$ such that
\begin{align}\label{H3}
\F'' (S) + a(x) \geq c_0 , \quad \forall s \in \mathbb{R}, \ a.e. \ x \in \Omega.
\end{align}
\item[(\textbf{H4})] ~~ There exist $c_1>0,c_2>0$ and $q>0$ if $d=2, q \geq \frac{1}{2}$ if $d=3$ such that 
\begin{align*}
\F''(s) + a(x) \geq c_1 |s|^{2q} - c_2\quad \forall s \in \mathbb{R}, \ a.e. \ x \in \Omega.
\end{align*}
\item[(\textbf{H5})] ~~ There exist $c_3 >0$ and $p \in (1,2]$ such that 
\begin{align*}
|\F'(s)|^p \leq c_3 (|\F(s)+ 1|) \quad \forall s \in \mathbb{R}.
\end{align*}
\item[(\textbf{H6})] ~~ $\nu$ is Lipschitz on $\mathbb{R}$ and there exist $\nu_0, \nu_1 >0$ such that 
\begin{align*}
\nu_0 \leq \nu (s) \leq \nu_1, \quad \forall  s \in \mathbb{R},
\end{align*}
and $\eta \in L^\infty(\Omega)$ is such that $\eta(x) \geq 0, \ \mathrm{a.e.} \ x \in \Omega$.
\end{itemize}	
Now, we summarize few results from \cite{CHB weak} regarding well-posedness and the uniqueness of the system: 		
	\begin{definition}
	Let $T>0$ be given and let $\varphi_0 \in H $ be such that $F(\varphi_0) \in L^1(\Omega)$. Then $(\varphi, \u)$ is a weak solution of \eqref{60}-\eqref{61} if 
	\begin{align*}
	&\varphi \in C([0,T];H) \cap L^2(0,T;V) \\
	&\varphi_t \in L^2(0,T;V') \\
	&\mu = a \varphi  - \J * \varphi +\F'(\varphi)\in L^2(0,T;V) \\
	& \u \in L^2(0,T;\V_{\mathrm{div}} )
	\end{align*}
	and it satisfies 
	\begin{align*}
	& \langle \varphi_t, \psi \rangle + (\nabla \mu , \nabla \psi) = (u \varphi, \nabla \psi), \quad \forall \psi \in V, \quad a.e. \ \mathrm{ in } \ (0,T), \\
	& (\nu(\varphi)\nabla \u, \nabla \v) + (\eta\u, \v)= (\mu \nabla \varphi, \v) + \langle \h, \v \rangle, \quad \forall \v \in \G_{\mathrm{div}}, \quad a.e. \ \mathrm{ in } \ (0,T), \\
	& \varphi(0) = \varphi_0, \quad a.e. \ \mathrm{in} \ \Omega.
	\end{align*}
	\end{definition}
	\begin{theorem} \label{existence} [\cite{CHB weak}, Theorem 2.2]
	Suppose that (\textbf{H1})-(\textbf{H6}) are satisfied. Let $\varphi_0 \in H $ be such that $\F (\varphi_0) \in L^1 (\Omega)$ and $\h \in L^2(0,T; \V'_{\mathrm{div}})$. Then there exists a weak solution $(\varphi, \u)$ of \eqref{60}-\eqref{61}. Furthermore, $F(\varphi)$ is in $ L^{\infty} (0,T; L^1(\Omega))$ and setting
	\begin{align*}
	\mathcal{E}(\varphi (t)) = \frac{1}{4} \int_\Omega \int_\Omega \J (x-y) (\varphi(x)- \varphi(y))^2 \d x \d y + \int_\Omega \F(\varphi(x)) \d x
	\end{align*}
	the following energy equality holds for almost every $t \in (0,T)$
	\begin{align*}
	\frac{d}{dt} \mathcal{E}(\varphi(t)) + \|\nabla \mu \|^2+ \nu \|\nabla \u \|^2 + \| \u \| = \langle \h, \u \rangle.
	\end{align*}
	\end{theorem}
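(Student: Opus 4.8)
The plan is to prove existence by a Faedo--Galerkin approximation followed by a compactness argument, in the spirit of the nonlocal Cahn--Hilliard--Navier--Stokes theory but simplified by the fact that the Brinkman equation carries no time derivative, so that the velocity is slaved to the phase field. Let $\{w_j\}_{j\ge1}\subset\D(\mathcal B)$ be the $H$-orthonormal eigenfunctions of the Neumann Laplacian $\mathcal B$ and set $H_n=\mathrm{span}\{w_1,\dots,w_n\}$. One seeks $\varphi_n(t)=\sum_{j=1}^n c^n_j(t)w_j$ solving the first two equations of \eqref{60}--\eqref{61} projected onto $H_n$, with $\mu_n=a\varphi_n-\J*\varphi_n+\F'(\varphi_n)$ (meaningful since $\varphi_n$ is a finite combination of smooth functions). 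For given $(\varphi_n,\mu_n)$ the bilinear form $(\v,\w)\mapsto(\nu(\varphi_n)\nabla\v,\nabla\w)+(\eta\v,\w)$ is bounded and, by (\textbf{H6}), coercive on $\V_{\mathrm{div}}$, so Lax--Milgram yields a unique $\u_n\in\V_{\mathrm{div}}$ with $(\nu(\varphi_n)\nabla\u_n,\nabla\v)+(\eta\u_n,\v)=(\mu_n\nabla\varphi_n,\v)+\langle\h,\v\rangle$ for all $\v\in\V_{\mathrm{div}}$, depending locally Lipschitz-continuously on $(c^n_1,\dots,c^n_n)$. Substituting $\u_n$ back converts the projected $\varphi$-equation into an ODE system with locally Lipschitz right-hand side (using $\F\in C^{2,1}_{\mathrm{loc}}$ and $\nu$ Lipschitz), hence locally solvable; the estimates below make the solution global on $[0,T]$.

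For the a priori bounds I would take $\psi=\mu_n$ in the $\varphi_n$-equation, $\v=\u_n$ in the Brinkman equation and add, obtaining the approximate energy identity $\frac{d}{dt}\mathcal E(\varphi_n)+\|\nabla\mu_n\|^2+(\nu(\varphi_n)\nabla\u_n,\nabla\u_n)+(\eta\u_n,\u_n)=\langle\h,\u_n\rangle$. By (\textbf{H2})--(\textbf{H3}) the energy $\mathcal E(\varphi_n)$ controls $\|\varphi_n\|^2$ and $\|\F(\varphi_n)\|_{L^1}$ from below up to constants; absorbing $\langle\h,\u_n\rangle$ against $\nu_0\|\nabla\u_n\|^2$ by Young's inequality and applying Grönwall's lemma gives uniform bounds for $\varphi_n$ in $L^\infty(0,T;H)$, for $\F(\varphi_n)$ in $L^\infty(0,T;L^1)$, for $\nabla\mu_n$ in $L^2(0,T;H)$ and for $\u_n$ in $L^2(0,T;\V_{\mathrm{div}})$. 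Assumption (\textbf{H4}) then supplies a further space-time bound for $\varphi_n$ in $L^{2+2q}$, while (\textbf{H5}) with the $L^1$-bound on $\F(\varphi_n)$ bounds $\F'(\varphi_n)$, hence the average $\overline{\mu_n}$, in a suitable $L^p$; combined with Poincaré--Wirtinger this gives $\mu_n$ bounded in $L^2(0,T;V)$. Writing $\nabla\mu_n=(\F''(\varphi_n)+a)\nabla\varphi_n+\varphi_n\nabla a-(\nabla\J)*\varphi_n$ and invoking the coercivity $\F''+a\ge c_0$ of (\textbf{H3}) to solve for $\nabla\varphi_n$ then yields $\varphi_n$ bounded in $L^2(0,T;V)$. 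Finally, inserting these bounds into the weak $\varphi_n$-equation and estimating the transport term $(\u_n\varphi_n,\nabla\psi)$ via Ladyzhenskaya's inequality (\autoref{lady}) bounds $\partial_t\varphi_n$ in $L^2(0,T;V')$.

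Passing to the limit, Banach--Alaoglu together with the Aubin--Lions--Simon lemma (since $L^2(0,T;V)\cap W^{1,2}(0,T;V')$ embeds compactly in $L^2(0,T;H)$) give, along a subsequence, $\varphi_n\rightharpoonup\varphi$ in $L^2(0,T;V)$ and weak-$*$ in $L^\infty(0,T;H)$, $\partial_t\varphi_n\rightharpoonup\varphi_t$ in $L^2(0,T;V')$, $\varphi_n\to\varphi$ strongly in $L^2(0,T;H)$ and a.e.\ in $\Omega\times(0,T)$, $\mu_n\rightharpoonup\mu$ in $L^2(0,T;V)$, and $\u_n\rightharpoonup\u$ in $L^2(0,T;\V_{\mathrm{div}})$. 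The a.e.\ convergence with the continuity and (\textbf{H5})-growth of $\F'$ give $\F'(\varphi_n)\to\F'(\varphi)$ in $L^2(0,T;H)$ by Vitali's theorem, so $\mu=a\varphi-\J*\varphi+\F'(\varphi)$; likewise $a\varphi_n\to a\varphi$ and $\J*\varphi_n\to\J*\varphi$. Interpolating the strong $L^2(0,T;H)$ convergence against the $L^2(0,T;V)$ bound gives $\varphi_n\to\varphi$ strongly in $L^2(0,T;L^q)$ for every finite $q$, so that $\u_n\varphi_n\rightharpoonup\u\varphi$ and $\nu(\varphi_n)\nabla\u_n\rightharpoonup\nu(\varphi)\nabla\u$ (weak times strong), and, rewriting the Korteweg force modulo a pressure gradient so that $(\mu_n\nabla\varphi_n,\v)=-(\varphi_n\nabla\mu_n,\v)$ for $\v\in\V_{\mathrm{div}}$, also $\varphi_n\nabla\mu_n\rightharpoonup\varphi\nabla\mu$. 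These convergences let me pass to the limit in the Galerkin identities tested against elements of a fixed $H_m$ and then, by density, against all $\psi\in V$ and $\v\in\V_{\mathrm{div}}$; the initial datum follows from $\varphi_n\in C([0,T];H)$ and weak continuity of $\varphi$. Lower semicontinuity of the $L^2$-norms in the limit of the approximate energy identity, with Fatou's lemma, yields the energy inequality and $\F(\varphi)\in L^\infty(0,T;L^1)$; since $\varphi\in L^2(0,T;V)$, $\varphi_t\in L^2(0,T;V')$, $\mu\in L^2(0,T;V)$ and $\F'(\varphi)=\mu-a\varphi+\J*\varphi\in L^2(0,T;V)$, the chain rule for the nonlocal free energy applies and identifies $\langle\varphi_t,\mu\rangle$ with $\frac{d}{dt}\mathcal E(\varphi(t))$, upgrading the inequality to the stated equality.

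I expect the main obstacles to be twofold. First, obtaining the \emph{uniform} $L^2(0,T;V)$ bound on $\varphi_n$: unlike the local model there is no fourth-order regularization, so one must genuinely invert the algebraic relation between $\nabla\mu_n$ and $\nabla\varphi_n$, which is possible only because of the coercivity (\textbf{H3}) (and is why all of (\textbf{H2})--(\textbf{H5}) are imposed). Second, giving rigorous meaning to, and passing to the limit in, the Korteweg forcing $\mu\nabla\varphi$: this is handled by absorbing $\nabla(\mu\varphi)$ into the pressure and working with $-\varphi\nabla\mu$ tested against divergence-free fields, where the strong $L^2(0,T;L^q)$ convergence of $\varphi_n$ is essential; the same strong convergence is what makes the variable-viscosity term $\nu(\varphi_n)\nabla\u_n$ pass to the limit.
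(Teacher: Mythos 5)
The paper itself contains no proof of this statement: it is quoted verbatim from [\cite{CHB weak}, Theorem 2.2] as a prerequisite, so there is no internal argument to compare against. Your outline, however, is essentially the standard proof given in that reference (and in the closely related nonlocal Cahn--Hilliard--Navier--Stokes literature): Faedo--Galerkin in the phase variable with the velocity slaved through Lax--Milgram, the energy identity obtained by testing with $\mu_n$ and $\u_n$, recovery of the $V$-bound on $\varphi_n$ by inverting $\nabla\mu_n=(a+\F'')\nabla\varphi_n+\varphi_n\nabla a-(\nabla\J)*\varphi_n$ via (\textbf{H3}), the rewriting of the Korteweg force as $-\varphi\nabla\mu$ modulo a pressure gradient, and Aubin--Lions compactness to pass to the limit. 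The identification of the correct roles of (\textbf{H2})--(\textbf{H5}) is accurate, and the upgrade from energy inequality to equality by testing the limit equation and invoking the chain rule for $\mathcal E$ is the right mechanism.

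One step as written is not legitimate and needs the standard repair: you cannot take $\psi=\mu_n$ in the projected $\varphi_n$-equation, because $\mu_n=a\varphi_n-\J*\varphi_n+\F'(\varphi_n)$ does not belong to $H_n$. You must define the Galerkin chemical potential as the projection $\tilde\mu_n:=\tilde P_n\bigl(a\varphi_n-\J*\varphi_n+\F'(\varphi_n)\bigr)\in H_n$ and use it both in the equation and as the test function. The energy identity survives because $(\partial_t\varphi_n,\tilde\mu_n)=(\partial_t\varphi_n,a\varphi_n-\J*\varphi_n+\F'(\varphi_n))=\frac{d}{dt}\mathcal E(\varphi_n)$ (as $\partial_t\varphi_n\in H_n$), and the $V$-bound on $\varphi_n$ still follows because the spectral projection onto Neumann eigenfunctions commutes with the Dirichlet form, so $(\nabla\tilde\mu_n,\nabla\varphi_n)=(\nabla(a\varphi_n-\J*\varphi_n+\F'(\varphi_n)),\nabla\varphi_n)$. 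With that adjustment (and the corresponding bookkeeping when recovering the bound on the unprojected $\mu_n$ in $L^2(0,T;V)$ before passing to the limit), your argument is sound and coincides with the proof in the cited source.
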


	\begin{theorem}[\cite{CHB weak}, Proposition 2.1]
 Let assumptions of Theorem \ref{existence} hold. If $\varphi_0 \in L^\infty(\Omega)$ then any solution $(\varphi, \u)$ to the problem on $[0,T]$ corresponding to $\varphi_0$ satisfies 
      \begin{align*}
      \varphi, \mu \in L^\infty (\Omega \times (0,T)).
      \end{align*}
	\end{theorem}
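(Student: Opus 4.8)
The plan is to derive the bound for $\varphi$ by a Moser--Alikakos-type iteration on its $L^p(\Omega)$-norms, and then to read off the bound for $\mu$ directly from $\mu=a\varphi-\J\ast\varphi+\F'(\varphi)$. Let $(\varphi,\u)$ be a weak solution with $\varphi_0\in L^\infty(\Omega)$, fix $p\geq 2$, and (after a standard truncation argument that we suppress) test the $\varphi$-equation with $|\varphi|^{p-2}\varphi\in V$. Since $\mathrm{div}\,\u=0$ and $\u|_{\partial\Omega}=0$, the convective term $(\u\varphi,\nabla(|\varphi|^{p-2}\varphi))=\tfrac{p-1}{p}\int_\Omega\u\cdot\nabla|\varphi|^p\,\d x$ vanishes; using $\nabla\mu=(a+\F''(\varphi))\nabla\varphi+\varphi\nabla a-\nabla\J\ast\varphi$ together with the coercivity $a+\F''(\varphi)\geq c_0$ from $(\textbf{H3})$ one obtains
\[
\frac1p\frac{d}{dt}\|\varphi\|_{L^p}^p+\frac{4(p-1)c_0}{p^2}\bigl\|\nabla|\varphi|^{p/2}\bigr\|^2
=(p-1)\int_\Omega(\varphi\,\nabla a-\nabla\J\ast\varphi)\cdot|\varphi|^{p-2}\nabla\varphi\,\d x .
\]
Now $\nabla a(x)=\int_\Omega\nabla\J(x-y)\,\d y$, so $\|\nabla a\|_{L^\infty}\leq\|\nabla\J\|_{L^1}$ by $(\textbf{H2})$, and Young's convolution inequality gives $\int_\Omega|\varphi|^{p-2}|\nabla\J\ast\varphi|^2\,\d x\leq\|\nabla\J\|_{L^1}^2\|\varphi\|_{L^p}^p$. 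Writing $w:=|\varphi|^{p/2}$, the right-hand side above is therefore at most $\tfrac{2(p-1)}{p}\bigl(\|\nabla a\|_{L^\infty}+\|\nabla\J\|_{L^1}\bigr)\|w\|\,\|\nabla w\|$, and absorbing part of it into the dissipative term by Young's inequality yields
\[
\frac{d}{dt}\|\varphi\|_{L^p}^p+c_0\|\nabla w\|^2\leq C\,p^2\,\|\varphi\|_{L^p}^p ,\qquad C=C\bigl(\|\nabla\J\|_{L^1},c_0\bigr).
\]

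For the iteration, put $p_k=2^kp_0$ with $p_0=2$, and $w_k=|\varphi|^{p_k/2}$, so that $\|w_k\|_{L^1}=\|\varphi\|_{L^{p_{k-1}}}^{p_{k-1}}$ and $\|w_k\|^2=\|\varphi\|_{L^{p_k}}^{p_k}$. Splitting the term $Cp_k^2\|w_k\|^2$ by means of the two-dimensional Gagliardo--Nirenberg inequality $\|w_k\|^2\leq C_\Omega\bigl(\|\nabla w_k\|\,\|w_k\|_{L^1}+\|w_k\|_{L^1}^2\bigr)$ (see \cite{nirenberg}), absorbing one part into $c_0\|\nabla w_k\|^2$ and using the remaining dissipation — again through Gagliardo--Nirenberg — to control $\|w_k\|^2$ itself, one finds that the time derivative of $\|\varphi(t)\|_{L^{p_k}}^{p_k}$ is strictly negative whenever $\|\varphi(t)\|_{L^{p_k}}^{p_k}$ exceeds a threshold of the form $\widehat C\,p_k^{\gamma}N_{k-1}^2$, where $N_{k-1}:=\sup_{s\in[0,T]}\|\varphi(s)\|_{L^{p_{k-1}}}^{p_{k-1}}$ and $\widehat C,\gamma>0$ are independent of $k$. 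A standard ODE comparison argument then gives
\[
N_k\;\leq\;\max\Bigl(\|\varphi_0\|_{L^{p_k}}^{p_k},\ \widehat C\,p_k^{\gamma}\,N_{k-1}^2\Bigr).
\]
Since $\|\varphi_0\|_{L^{p_k}}^{p_k}\leq|\Omega|\,\|\varphi_0\|_{L^\infty}^{p_k}$ and $2/p_k=1/p_{k-1}$, putting $Q_k:=N_k^{1/p_k}=\|\varphi\|_{L^\infty(0,T;L^{p_k}(\Omega))}$ turns this into the multiplicative recursion $Q_k\leq\max(A_k,B_kQ_{k-1})$, with $A_k:=|\Omega|^{1/p_k}\|\varphi_0\|_{L^\infty}$ bounded (indeed $A_k\to\|\varphi_0\|_{L^\infty}$) and $B_k:=(\widehat C p_k^{\gamma})^{1/p_k}\to1$ with $\sum_k|\log B_k|<\infty$. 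Iterating, and using that consequently all partial products $\prod_{i=m}^{n}B_i$ are uniformly bounded, one gets $\sup_kQ_k<\infty$; here the starting value $Q_0=\|\varphi\|_{L^\infty(0,T;H)}$ is finite because $\varphi\in C([0,T];H)$. Letting $k\to\infty$ yields $\varphi\in L^\infty(\Omega\times(0,T))$.

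The main obstacle is precisely this closing of the iteration. A plain Gronwall estimate applied to the inequality of the first paragraph would leave an additive term $\|\varphi_0\|_{L^{p_k}}^{p_k}$ that does not shrink along the iteration and forces $Q_k$ to grow without bound; one must instead use the dissipative term $c_0\|\nabla w_k\|^2$ twice — once to absorb part of the $p_k^2\|w_k\|^2$ term, and once (through Gagliardo--Nirenberg) to render $\tfrac{d}{dt}\|\varphi(t)\|_{L^{p_k}}^{p_k}$ negative at high levels — in order to reach the purely multiplicative recursion $N_k\lesssim p_k^{\gamma}N_{k-1}^2$, and one must track the polynomial-in-$p_k$ growth of the constants so that $\prod_kB_k$ converges. (Alternatively, the stronger coercivity $(\textbf{H4})$ enlarges the dissipation in two dimensions and also permits the scheme to close; the argument above uses only $(\textbf{H3})$.) Making all the above rigorous for a weak solution requires only a routine truncation and approximation argument.

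Finally, once $\varphi\in L^\infty(\Omega\times(0,T))$ is known, the bound for $\mu$ is immediate: $a(x)=\int_\Omega\J(x-y)\,\d y$ gives $\|a\|_{L^\infty}\leq\|\J\|_{L^1}$; Young's convolution inequality gives $\|(\J\ast\varphi)(\cdot,t)\|_{L^\infty}\leq\|\J\|_{L^1}\|\varphi(\cdot,t)\|_{L^\infty}$ for a.e.\ $t$; and since $\F\in C^{2,1}_{loc}(\mathbb R)$ one has $|\F'(\varphi)|\leq\max_{|s|\leq\|\varphi\|_{L^\infty(\Omega\times(0,T))}}|\F'(s)|<\infty$ a.e. Adding the three, $\mu=a\varphi-\J\ast\varphi+\F'(\varphi)\in L^\infty(\Omega\times(0,T))$.
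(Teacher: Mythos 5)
The paper states this result without proof, importing it verbatim from [\cite{CHB weak}, Proposition~2.1], so there is no in-paper argument to compare against; the Moser--Alikakos iteration you give is precisely the argument used in that reference and its antecedents (test with $|\varphi|^{p-2}\varphi$, use $a+\F''(\varphi)\ge c_0$ from (\textbf{H3}) to produce dissipation of $|\varphi|^{p/2}$ in $H^1$, and close the $L^{p_k}$-recursion with the two-dimensional Gagliardo--Nirenberg inequality), and your sketch is sound modulo the truncation and ODE-comparison details you explicitly flag. The concluding bound for $\mu$ via $\|a\|_{L^\infty}\le\|\J\|_{L^1}$, Young's convolution inequality, and the local boundedness of $\F'$ is likewise correct.
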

	
\begin{theorem} \label{regularity} [\cite{CHB weak}, Corollary 2.1]
 Let (\textbf{H1})-(\textbf{H6}) hold. If $\h \in L^\infty(0,T; \V_{\text{div}}')$ for some $T>0$. Then any weak solution $(\varphi, \u)$ to \eqref{60}-\eqref{61} is such that
\begin{align*}
\varphi \in L^4(0,T;L^4(\Omega)), \quad \u \in L^\infty(0,T;\V_{\text{div}}).
\end{align*}
\end{theorem}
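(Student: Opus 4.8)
The plan is to bootstrap the a priori bounds of Theorem~\ref{existence} using the growth hypothesis (\textbf{H4}) and then feed the resulting spatial integrability of $\varphi$ into the Brinkman equation, which is elliptic (stationary in time), so that a bound on $\varphi$ valid for a.e.\ $t$ converts directly into an $L^\infty$-in-time bound on $\u$. To begin, since $\h\in L^\infty(0,T;\V'_{\mathrm{div}})\subset L^2(0,T;\V'_{\mathrm{div}})$, Theorem~\ref{existence} applies; integrating the energy equality in time and absorbing $\langle\h,\u\rangle$ into the viscous term by Young's inequality gives bounds, depending only on $\mathcal{E}(\varphi_0)$, $T$, $\nu_0$ and $\|\h\|_{L^\infty(0,T;\V'_{\mathrm{div}})}$, that ensure $\varphi\in L^\infty(0,T;H)\cap L^2(0,T;V)$, $\nabla\mu\in L^2(0,T;H)$, $\u\in L^2(0,T;\V_{\mathrm{div}})$ and $\F(\varphi)\in L^\infty(0,T;L^1(\Omega))$; mass conservation $\overline{\varphi}(t)\equiv\overline{\varphi_0}$ follows on testing the $\varphi$-equation with $\psi\equiv1$.

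The key step is to exploit (\textbf{H4}). Integrating $\F''(s)+a(x)\ge c_1|s|^{2q}-c_2$ twice yields $\F(s)\ge c\,|s|^{2q+2}-C|s|^2-C$ for all $s\in\R$ and some $c,C>0$; combined with $\F(\varphi)\in L^\infty(0,T;L^1)$ and $\varphi\in L^\infty(0,T;H)$ this forces $\varphi\in L^\infty(0,T;L^{2q+2}(\Omega))$. If $2q+2\ge4$ --- which holds for any polynomial double-well potential, e.g.\ $\F(s)=(s^2-1)^2$ with $q=1$ --- this already contains $L^4(0,T;L^4(\Omega))$; in the remaining range $0<q<1$ one interpolates between $\varphi\in L^\infty(0,T;L^{2q+2})$ and $\varphi\in L^2(0,T;V)$ by the two-dimensional Gagliardo--Nirenberg inequality to get $\|\varphi(t)\|_{L^4}^4\le C\|\varphi(t)\|_V^{2(1-q)}+C$, whose right side is integrable on $(0,T)$ since $\|\varphi\|_V\in L^2(0,T)$ and $2(1-q)\le2$; either way $\varphi\in L^4(0,T;L^4(\Omega))$.

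For the velocity I would argue pointwise in $t$. For a.e.\ $t$, $\u(t)$ solves the stationary Brinkman problem; testing it with $\u(t)$ and using $\eta\ge0$ together with (\textbf{H6}) gives $\nu_0\|\nabla\u\|^2\le(\mu\nabla\varphi,\u)+\langle\h,\u\rangle$. As $\u$ is divergence-free and vanishes on $\partial\Omega$, the term $(\F'(\varphi)\nabla\varphi,\u)=(\nabla[\F(\varphi)],\u)$ vanishes; writing in addition $a\varphi\nabla\varphi=\tfrac12\nabla(a\varphi^2)-\tfrac12\varphi^2\nabla a$ and $(\J*\varphi)\nabla\varphi=\nabla[(\J*\varphi)\varphi]-\varphi(\nabla\J*\varphi)$ and dropping the pure gradients against $\u$, one is left with $-\tfrac12(\varphi^2\nabla a,\u)+(\varphi(\nabla\J*\varphi),\u)$, which by H\"older's inequality, Young's convolution inequality and $\nabla a\in L^\infty(\Omega)$, $\nabla\J\in L^1(\R^d)$ (from (\textbf{H2})) is bounded by $C\|\varphi\|_{L^4}^2\|\u\|$. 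Using Poincar\'e's inequality ($\|\u\|\le C\|\nabla\u\|$ on $\V_{\mathrm{div}}$) and $|\langle\h,\u\rangle|\le\|\h\|_{\V'_{\mathrm{div}}}\|\nabla\u\|$, and then dividing by $\|\nabla\u\|$, we arrive at $\nu_0\|\nabla\u(t)\|\le C\|\varphi(t)\|_{L^4}^2+\|\h(t)\|_{\V'_{\mathrm{div}}}$ for a.e.\ $t$. Taking the essential supremum in $t$ and invoking $\varphi\in L^\infty(0,T;L^4(\Omega))$ and $\h\in L^\infty(0,T;\V'_{\mathrm{div}})$ gives $\u\in L^\infty(0,T;\V_{\mathrm{div}})$.

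The step I expect to be the real obstacle is securing a bound on $\|\varphi(t)\|_{L^4(\Omega)}$ that is \emph{uniform in $t$} rather than merely $L^4$ in time, since it is precisely this uniformity --- supplied by (\textbf{H4}) and by the cancellation of the convective and pure-gradient terms in the elliptic estimate --- that upgrades the a.e.-$t$ bound above to $\u\in L^\infty(0,T;\V_{\mathrm{div}})$. When $0<q<1$ the uniform $L^4(\Omega)$-bound on $\varphi$ is not immediate from (\textbf{H4}) alone and needs a further argument, for instance testing the $\varphi$-equation with $|\varphi|^2\varphi$ --- where the convective contribution $\int_\Omega\u\cdot\nabla(|\varphi|^4)$ again vanishes since $\operatorname{div}\u=0$ --- followed by a Gronwall or Moser-type iteration; by contrast, the $L^4(0,T;L^4)$-regularity of $\varphi$ itself is softer, requiring only (\textbf{H4}) and interpolation.
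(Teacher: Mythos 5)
First, note that the paper does not prove Theorem \ref{regularity} at all: it is imported verbatim from [\cite{CHB weak}, Corollary 2.1], so there is no in-paper proof to compare against. Judged on its own terms, your strategy is the standard one, and your elliptic estimate for the velocity is exactly the computation the paper itself reuses later (the decomposition $\mu\nabla\varphi=\nabla(\F(\varphi)+a\varphi^2/2)-\nabla a\,\varphi^2/2-(\J*\varphi)\nabla\varphi$ and the resulting bound $\nu\|\u\|_{\V_{\mathrm{div}}}\lesssim\|\varphi\|_{L^4}^2+\|\h\|_{\V'_{\mathrm{div}}}$ appear in \eqref{136}--\eqref{87} and in the remark following Theorem \ref{strongsol}). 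The first half of your argument is sound: $\varphi\in L^4(0,T;L^4(\Omega))$ does follow, and in fact in $d=2$ it follows even more cheaply from $\varphi\in L^\infty(0,T;H)\cap L^2(0,T;V)$ via Ladyzhenskaya, with no appeal to (\textbf{H4}); your route through $\F(s)\ge c|s|^{2q+2}-Cs^2-C$ and Gagliardo--Nirenberg is also correct and has the merit of covering $d=3$.

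The genuine gap is the one you flag yourself: the conclusion $\u\in L^\infty(0,T;\V_{\mathrm{div}})$ hinges on $\varphi\in L^\infty(0,T;L^4(\Omega))$, and the only uniform-in-time spatial integrability your argument produces is $\varphi\in L^\infty(0,T;L^{2q+2}(\Omega))$ (from $\F(\varphi)\in L^\infty(0,T;L^1)$), which reaches $L^4$ only when $q\ge1$. Since (\textbf{H4}) as stated permits any $q>0$ in dimension two, the theorem as quoted is not covered by your proof in the range $0<q<1$: an $L^4$-in-time bound on $\|\varphi(t)\|_{L^4}$ cannot be fed through the pointwise-in-$t$ elliptic estimate to yield an essential supremum. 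The fix you sketch (testing the $\varphi$-equation with $|\varphi|^2\varphi$, using $\operatorname{div}\u=0$ to kill the convective term and (\textbf{H3}) to extract a coercive term $c_0\int|\varphi|^2|\nabla\varphi|^2$ from $(\nabla\mu,\nabla(|\varphi|^2\varphi))$) is the right idea but is not carried out, and it is not a one-line step. So either the additional restriction $2q+2\ge4$ must be imposed (which is what makes the cited corollary work for standard polynomial double wells), or that $L^4$-energy estimate must actually be executed; as written, the second assertion of the theorem is not proved.
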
	
The following result can be proved using [\cite{CHB weak}, Proposition 2.2].
	\begin{theorem} \label{weakunique}
	Let hypotheses (\textbf{H1})-(\textbf{H6}) hold. Suppose 	$\h_1, \h_2 \in L^\infty(0,T;\V_{\mathrm{div}}')$. Consider two weak solutions to \eqref{60}-\eqref{61}, namely $(\varphi_1, \u_1)$ and $(\varphi_2, \u_2)$, corresponding to the initial data $\varphi_{1,0}$ and $\varphi_{2,0}$ such that $\varphi_{i,0} \in L^2(\Omega)$ and $F(\varphi_{i,0}) \in L^1(\Omega), i=1,2$. Then there exists $N=N(T)>0$ such that, for any $t \in [0,T]$, 
	\begin{align}
	&\| \varphi_1(t) - \varphi_2(t) \|^2_{V'}   + \int_0^t \|\u_1(t) - \u_2(t) \|^2_{\V_{\mathrm{div}}} \no \\
	& \hspace{1cm}\leq N (\|\varphi_{1,0}- \varphi_{2,0}\|^2_{V'} + |\bar{\varphi}_{1,0} - \bar{\varphi}_{2,0}|)\|\h_1-\h_2\|_{L^2(0,T;\V_{\mathrm{div}}')} \label{uniq}
	\end{align}
	In particular, \eqref{16}-\eqref{op2} has a unique solution.
	\end{theorem}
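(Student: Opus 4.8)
The plan is to retrace the continuous dependence argument used in the proof of Proposition 2.2 of \cite{CHB weak}, now allowing $\h_1\neq\h_2$ and keeping track of the extra forcing term. Write $\varphi:=\varphi_1-\varphi_2$, $\u:=\u_1-\u_2$, $\mu:=\mu_1-\mu_2=a\varphi-\J*\varphi+\F'(\varphi_1)-\F'(\varphi_2)$ and $\h:=\h_1-\h_2$. Testing the first equation of \eqref{60} with $\psi=1$ and using $\mathrm{div}(\u_i)=0$, $\u_i=0$ on $\partial\Omega$ and $\partial_{\n}\mu_i=0$ shows that the spatial mean $\overline{\varphi}_i(t)$ is conserved, so that $\overline{\varphi}(t)\equiv\overline{\varphi}_{1,0}-\overline{\varphi}_{2,0}$ for every $t$ and $\varphi-\overline{\varphi}$ has zero mean. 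Subtracting the two weak formulations gives, for a.e.\ $t$,
\begin{align*}
\langle\varphi_t,\psi\rangle+(\nabla\mu,\nabla\psi) &=(\u_1\varphi+\u\varphi_2,\nabla\psi),\\
(\nu(\varphi_1)\nabla\u,\nabla\v)+(\eta\u,\v) &=-\big((\nu(\varphi_1)-\nu(\varphi_2))\nabla\u_2,\nabla\v\big)\\
&\quad+(\mu\nabla\varphi_1+\mu_2\nabla\varphi,\v)+\langle\h,\v\rangle,
\end{align*}
for all $\psi\in V$ and $\v\in\G_{\mathrm{div}}$.

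Let $\mathcal{N}$ denote the inverse of $-\Delta$ with homogeneous Neumann boundary conditions acting on zero-mean functions, and recall that $v\mapsto\|\nabla\mathcal{N}v\|$ is a norm on $\{v\in V':\overline{v}=0\}$ equivalent to $\|\cdot\|_{V'}$. The idea is to choose $\psi=\mathcal{N}(\varphi-\overline{\varphi})$ in the first identity and $\v=\u$ in the second and add them. The time derivative then yields $\tfrac12\tfrac{d}{dt}\|\nabla\mathcal{N}(\varphi-\overline{\varphi})\|^2$ (since $\overline{\varphi}$ is constant in $t$); writing $(\mu,\varphi-\overline{\varphi})=(\mu,\varphi)-\overline{\varphi}\int_\Omega(\F'(\varphi_1)-\F'(\varphi_2))$, the mean of $a\varphi-\J*\varphi$ vanishing by the evenness of $\J$, hypothesis (\textbf{H3}) produces the coercive term $c_0\|\varphi\|^2$, while (\textbf{H6}) gives $\nu_0\|\nabla\u\|^2=\nu_0\|\u\|_{\V_{\mathrm{div}}}^2$ and $(\eta\u,\u)\geq0$. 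The new forcing contribution is benign: $|\langle\h,\u\rangle|\leq\|\h\|_{\V_{\mathrm{div}}'}\|\u\|_{\V_{\mathrm{div}}}\leq\tfrac{\nu_0}{4}\|\u\|_{\V_{\mathrm{div}}}^2+C\|\h\|_{\V_{\mathrm{div}}'}^2$, the first summand being absorbed on the left. The remaining terms — the convective terms $(\u_1\varphi+\u\varphi_2,\nabla\mathcal{N}(\varphi-\overline{\varphi}))$, the capillary cross terms $(\mu\nabla\varphi_1+\mu_2\nabla\varphi,\u)$, the variable-viscosity term, the convolution terms, and the mean correction $\overline{\varphi}\int_\Omega(\F'(\varphi_1)-\F'(\varphi_2))$ — I would bound by Hölder's inequality, the Ladyzhenskaya inequality of Lemma~\ref{lady}, the Gagliardo--Nirenberg interpolation inequality, Young's convolution inequality together with the smoothing of $\J*(\cdot)$, and Young's inequality, using the regularity of the two solutions furnished by Theorem~\ref{existence} and Theorem~\ref{regularity} ($\u_i\in L^\infty(0,T;\V_{\mathrm{div}})$, $\varphi_i\in L^4(0,T;L^4(\Omega))\cap L^2(0,T;V)$, $\mu_i\in L^2(0,T;V)$) and the growth of $\F''$ implied by (\textbf{H3}); the contributions proportional to $\|\varphi\|^2$ get absorbed into $c_0\|\varphi\|^2$ and those proportional to $\|\u\|_{\V_{\mathrm{div}}}^2$ into $\nu_0\|\u\|_{\V_{\mathrm{div}}}^2$. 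What remains is a differential inequality of the form
\begin{align*}
\frac{d}{dt}\|\nabla\mathcal{N}(\varphi-\overline{\varphi})\|^2+\frac{c_0}{2}\|\varphi\|^2+\frac{\nu_0}{2}\|\u\|_{\V_{\mathrm{div}}}^2 &\leq k(t)\big(\|\nabla\mathcal{N}(\varphi-\overline{\varphi})\|^2+|\overline{\varphi}_{1,0}-\overline{\varphi}_{2,0}|\big)+C\|\h\|_{\V_{\mathrm{div}}'}^2,
\end{align*}
with $k\in L^1(0,T)$ depending only on the norms of $(\varphi_i,\u_i)$; integrating in time and applying Gronwall's lemma gives \eqref{uniq}, and taking $\h_1=\h_2$, $\varphi_{1,0}=\varphi_{2,0}$ yields uniqueness of the weak solution.

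I expect the main obstacle — and the technical heart of Proposition 2.2 in \cite{CHB weak} — to be that the $V'$-valued test function $\mathcal{N}(\varphi-\overline{\varphi})$, which is forced on us because only $V'$-control of $\varphi$ propagates from the $V'$-datum, destroys the cancellation between the transport term and the capillary force $\mu\nabla\varphi$ that underlies the energy identity of Theorem~\ref{existence}. Hence $(\u\varphi_2,\nabla\mathcal{N}(\varphi-\overline{\varphi}))$ and $(\mu_2\nabla\varphi,\u)$ must be estimated individually; this calls for interpolating $\varphi$ between $H$ and $V$, for controlling $\|\nabla\varphi\|$ via the identity $\nabla\mu=(a+\F''(\varphi_1))\nabla\varphi+(\nabla a)\varphi-\nabla\J*\varphi+(\F''(\varphi_1)-\F''(\varphi_2))\nabla\varphi_2$ combined with the lower bound (\textbf{H3}), and for the full strength of the regularity in Theorems~\ref{existence} and \ref{regularity}; the non-constant mean correction is exactly what produces the $|\overline{\varphi}_{1,0}-\overline{\varphi}_{2,0}|$ term. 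By contrast, the forcing term enters only linearly in $\u$ and is immediately absorbed by the Brinkman dissipation, so its incorporation into the scheme of \cite{CHB weak} is routine.
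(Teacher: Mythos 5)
Your proposal follows exactly the route the paper takes: the paper's entire proof of this theorem is the one-line remark that one argues as in Proposition 2.2 of \cite{CHB weak} applied to the differences $\varphi=\varphi_1-\varphi_2$, $\u=\u_1-\u_2$, $\h=\h_1-\h_2$, and your sketch is a faithful (and considerably more explicit) execution of that same argument, with the new forcing contribution $\langle\h,\u\rangle$ absorbed into the Brinkman dissipation exactly as you describe. One remark in your favour: your Gronwall step yields $\|\h_1-\h_2\|^2_{L^2(0,T;\V_{\mathrm{div}}')}$ entering \emph{additively} alongside the initial-data terms, which is the correct form of the estimate; the product written in \eqref{uniq} is evidently a typographical slip, since taken literally it would force $\u_1=\u_2$ whenever $\varphi_{1,0}=\varphi_{2,0}$ regardless of the forcings.
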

	\begin{proof}
	Let us denote $\varphi = \varphi_1 - \varphi_2, \u = \u_1-\u_2$ and $\h = \h_1-\h_2$. Arguing exactly as in the proof of Proposition 2.2 in \cite{CHB weak} we can arrive at \eqref{uniq}. 
	\end{proof}

\subsection{ Strong Solution}
Let us consider the Cahn-Hilliard-Brinkman system
	\begin{align}
	\varphi_t + \u \cdot \nabla \varphi & = \Delta \mu, \quad \text{in} \quad \Omega \times (0,T), \label{16}\\
	\mu &= a \varphi  - \J * \varphi +\F'(\varphi),   \quad \text{in} \quad \Omega \times (0,T),\\
	-\nu \Delta \u + \u + \nabla \pi &= \mu \nabla \varphi + \h, \quad \text{in} \quad \Omega \times (0,T), \label{op1} \\
	\text{div} \, (\u) & = 0 , \quad \text{in} \quad \Omega \times (0,T),\\
		\u = \frac{\partial \mu}{\partial \n}&  = 0 , \quad \text{on} \quad \partial\Omega \times (0,T),\\
	\varphi (0)  & = \varphi_0 (x), \quad \text{in} \quad \Omega. \label{op2}
	\end{align}
which is obtained by assuming $\eta =1 $ and $\nu $ is independent of $\varphi$. We are now going to prove the main theorem of this section, namely the existence of a strong solution of the system \eqref{16}-\eqref{op2} for the dimensions $d=2$. We consider the space 
\begin{align*}
\mathcal{U} := \{ \h \in L^\infty (0,T; \G_{\mathrm{div}}) \quad | \quad \h_t \in L^2(0,T; \V'_{\mathrm{div}})\}.
\end{align*}
We observe that $\mathcal{U}$ is a Banach space with the norm (see Chapter 7 in \cite{Tomas})
\begin{align*}
\|\h\|_{\mathcal{U}} := \|\h\|_{L^\infty (0,T; \G_{\mathrm{div}})} + \|\h_t\|_{L^2(0,T; \V'_{\mathrm{div}})}.
\end{align*}
\begin{theorem} \label{strongsol}
Let $\h \in \mathcal{U}$ and $\varphi_0 \in H^2(\Omega) \cap L^\infty (\Omega)$ and hypothesis (\textbf{H1})-(\textbf{H5}) are satisfied. 
Further assume that $\F \in C^3(\mathbb{R}), \J \in W^{2,1}(\mathbb{R}^d)$. Then there exists a unique strong solution for the system \eqref{16}-\eqref{op2} on $[0,T]$ in the following sense 
\begin{align}
&\varphi \in  L^\infty(0,T;H^2(\Omega)), \label{43}\\
& \varphi_t \in L^\infty (0,T;H) \cap L^2(0,T; V) , \label{45} \\
&\u \in L^2(0,T; \H^2). \label{42} 
\end{align}
\end{theorem}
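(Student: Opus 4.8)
The plan is to follow the Faedo–Galerkin approach used for the Cahn–Hilliard–Navier–Stokes system in \cite{CHNS strong}, adapting it to the Brinkman (quasi-stationary, elliptic) velocity equation, and to derive the regularity \eqref{43}--\eqref{45}--\eqref{42} by testing the equations with suitable higher-order multipliers and using the energy/uniform bounds already guaranteed by Theorems \ref{existence}--\ref{regularity}. Throughout I would work with the approximating solutions $(\varphi_m,\u_m)$ obtained by projecting the $\varphi$-equation onto the span of the first $m$ eigenfunctions $\{w_j\}$ of the Neumann operator $\mathcal{B}$ (so that, crucially, $-\Delta$ commutes with the projection on this basis), while $\u_m$ is defined at each time as the solution of the linear Brinkman problem $-\nu\Delta\u_m+\u_m+\nabla\pi_m=\mu_m\nabla\varphi_m+\h$ with the current $\mu_m,\varphi_m$; elliptic regularity for the Stokes–Brinkman operator gives $\u_m\in\H^2$ with $\|\u_m\|_{\H^2}\le C\|\mu_m\nabla\varphi_m+\h\|$. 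Since all estimates below are performed on the Galerkin level and are uniform in $m$, passing to the limit is standard (Aubin–Lions), and uniqueness follows from the difference estimates of Theorem \ref{weakunique} together with the improved regularity; so I will only sketch the a priori estimates, which carry the real content.

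\textbf{Key steps.} First, starting from the weak solution's bounds $\varphi\in L^\infty(0,T;H)\cap L^2(0,T;V)$, $\u\in L^\infty(0,T;\V_{\mathrm{div}})$, $\varphi,\mu\in L^\infty(\Omega\times(0,T))$ and $\varphi\in L^4(0,T;L^4)$, I would derive the first new estimate by testing the $\varphi$-equation with $\mu_t$ (equivalently differentiating the energy and using $\mu=a\varphi-\J*\varphi+\F'(\varphi)$) to bound $\nabla\varphi$ and $\varphi_t$; more precisely, I will test the time-differentiated chemical-potential identity and the $\varphi$-equation with $\Delta\mu$ and $\mu_t$ respectively. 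Concretely: (i) Test $\varphi_t=\Delta\mu-\u\cdot\nabla\varphi$ with $\mu$ and use $\mu=(a+\F'')\,\text{-weighted}$ structure plus (\textbf{H3}) to get $\varphi\in L^\infty(0,T;V)$ and $\sqrt{\,}$-type control of $\nabla\mu$; here the key algebraic identity is $(\varphi_t,\mu)=\tfrac{d}{dt}\mathcal E(\varphi)$-type manipulation combined with $\nabla\mu = (a+\F''(\varphi))\nabla\varphi + (\nabla a)\varphi - \nabla\J*\varphi$, so that $\|\nabla\varphi\|\le c_0^{-1}(\|\nabla\mu\|+C\|\varphi\|)$ using (\textbf{H3}). (ii) Test the $\varphi$-equation with $\varphi_t$ and with $\mathcal B\varphi_t$, using the $L^\infty$ bound on $\varphi$ and on $\mu$, the regularity $\nu\in$ const, $\J\in W^{2,1}$, $\F\in C^3$ to handle $\partial_t\F'(\varphi)=\F''(\varphi)\varphi_t$ and the convolution terms, and the just-obtained $\u\in L^\infty(0,T;\V_{\mathrm{div}})\hookrightarrow L^\infty(0,T;\mathbb L^4)$ to absorb $\u\cdot\nabla\varphi$; this yields $\varphi_t\in L^\infty(0,T;H)\cap L^2(0,T;V)$, i.e. \eqref{45}. (iii) Then $\Delta\mu=\varphi_t+\u\cdot\nabla\varphi\in L^\infty(0,T;H)$ (using $\u\in\mathbb L^\infty_t\mathbb L^4_x$, $\nabla\varphi\in\mathbb L^\infty_t\mathbb L^4_x$ once $\varphi\in\mathbb L^\infty_t V$ is upgraded), hence $\mu\in L^\infty(0,T;H^2)$; inverting $\F'(\varphi)=\mu-a\varphi+\J*\varphi$ and using (\textbf{H3}) once more (the map $s\mapsto \F'(s)+a(x)s$ is strictly monotone, so $\nabla\varphi$ is controlled by $\nabla\mu$ and lower-order terms, and then $\Delta\varphi$ by $\Delta\mu$, $|\nabla\varphi|^2$, and convolution terms) gives $\varphi\in L^\infty(0,T;H^2)$, which is \eqref{43}. (iv) Finally, feeding $\varphi\in L^\infty_t H^2\hookrightarrow L^\infty_t W^{1,p}$ for all $p<\infty$ and $\mu\in L^\infty_t H^2\hookrightarrow L^\infty_t L^\infty$ into the elliptic estimate for the Brinkman equation gives $\mu\nabla\varphi\in L^\infty(0,T;H)$, hence $\u\in L^\infty(0,T;\H^2)\subset L^2(0,T;\H^2)$, which is \eqref{42} (in fact slightly better).

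\textbf{Main obstacle.} The delicate point is closing the estimates in step (ii)--(iii): the term $(\u\cdot\nabla\varphi,\mathcal B\varphi_t)$ and the term coming from $\partial_t(\mu\nabla\varphi)$ when one differentiates the Brinkman equation in time are genuinely quadratic in the highest norms, and one must exploit the two-dimensional Ladyzhenskaya/Agmon inequalities ($\|v\|_{L^\infty}\le C\|v\|^{1/2}\|v\|_{H^2}^{1/2}$, quoted above) together with the uniform $L^\infty$ bounds on $\varphi$ and $\mu$ to keep the nonlinearity subcritical and absorb it into the dissipation via Young's inequality; this is precisely where $d=2$ is used and where the argument of \cite{CHNS strong} must be modified because the Brinkman velocity is not an independent evolution variable but is slaved to $(\varphi,\mu)$, so its time derivative $\u_t$ must be estimated from $\h_t\in L^2(0,T;\V'_{\mathrm{div}})$ and from $\partial_t(\mu\nabla\varphi)$ — requiring the bound on $\varphi_t$ from step (ii) to be already in hand, so the estimates must be organised in the correct order and, if necessary, run together in a single Gronwall argument. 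A secondary technical nuisance is justifying the use of $\mathcal B\varphi_t$ as a test function at the Galerkin level, which is why the eigenbasis of $\mathcal B$ is chosen for the discretization.
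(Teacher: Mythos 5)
Your overall strategy (higher-order multipliers on top of the weak-solution bounds, two-dimensional interpolation, Gronwall, with uniqueness inherited from Theorem \ref{weakunique}) is in the same spirit as the paper's, and your Galerkin scaffolding with $\u_m$ slaved to $(\varphi_m,\mu_m)$ by elliptic regularity is a reasonable way to make the formal estimates rigorous (the paper works directly on the weak solution). The early parts also line up: the paper obtains $\u\in L^2(0,T;\H^2)$ by testing the Brinkman equation with $-\Delta\u$ and using $\varphi\in L^4(0,T;L^4)$, and obtains $\varphi\in L^\infty(0,T;V)$ together with $\varphi_t\in L^2(0,T;H)$ by testing the $\varphi$-equation with $\mu_t$ (not with $\mu$, which only reproduces the known energy identity), using \textbf{(H3)} and the comparison \eqref{22}.

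The genuine gap is in your step (ii), which is the heart of the theorem. Testing the \emph{undifferentiated} $\varphi$-equation with $\varphi_t$ or $\mathcal{B}\varphi_t$ cannot produce $\varphi_t\in L^\infty(0,T;H)$: no term $\frac{\d}{\d t}\|\varphi_t\|^2$ (or $\frac{\d}{\d t}\int_\Omega(a+\F''(\varphi))\varphi_t^2$) is generated, and in the nonlocal setting the multiplier $\mathcal{B}\varphi_t$ creates the uncontrolled pairing $(\Delta\mu,\Delta\varphi_t)$. What is actually needed --- and what the paper does in \eqref{33}--\eqref{142} --- is to differentiate \emph{both} equations in time, test with $\mu_t$ and $\u_t$, and add, so that $(\varphi_{tt},\mu_t)$ yields $\frac12\frac{\d}{\d t}\int_\Omega(a+\F''(\varphi))\varphi_t^2$ and the cross terms $(\u\cdot\nabla\varphi_t,\mu_t)$, $(\mu\nabla\varphi_t,\u_t)$, $(\u_t\cdot\nabla\varphi,\J*\varphi_t)$ can be absorbed into $\|\nabla\mu_t\|^2$, $\|\u_t\|^2$ and $\nu\|\nabla\u_t\|^2$. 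Moreover, your claim that the nonlinearity stays subcritical and can be absorbed by Young's inequality is not correct as stated: the terms $\int_\Omega\F'''(\varphi)\varphi_t^3$ and $(\mu\nabla\varphi_t,\u_t)$ leave a residual $C\|\varphi_t\|^4$ on the right-hand side of the resulting inequality \eqref{35}, which a naive Gronwall cannot close. The paper closes it with a logarithmic (Brezis--Gallouet-type) device: dividing by $1+\int_\Omega(a+\F''(\varphi))\varphi_t^2$ and using the previously established $\varphi_t\in L^2(0,T;H)$ so that the quartic term becomes an integrable coefficient. This device, and the time-differentiation of the coupled system that precedes it, are absent from your outline; without them the $L^\infty(0,T;H)$ bound on $\varphi_t$, and hence $\Delta\mu\in L^\infty(0,T;H)$ and $\varphi\in L^\infty(0,T;H^2)$, do not follow.
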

\begin{proof}
Note that by Theorem \ref{existence}, \ref{regularity} and \ref{weakunique} there exists a unique weak solution of \eqref{16}-\eqref{op2} under given assumptions. To prove higher regularity given by \eqref{43} we take inner product of \eqref{op1} with $- \Delta \u $. We get 
	\begin{align} \label{9}
	\nu \| \Delta \u \|^2 + \| \nabla \u \|^2 - (\nabla \pi ,\Delta \u)&= -(\mu \nabla \varphi, \Delta \u )- (\h,\Delta \u).
	\end{align}
	Since $\textrm{div}(\u) =0$, we have $-(\nabla \pi, \Delta \u) =( \pi, \Delta (\textrm{div}(\u))) =0$. Henceforth we shall denote by a positive constant $C = C(\J,\F, \Omega, \nu)$ and $C$ may vary from line to line, even within the estimate. Before we estimate the right hand side of \eqref{9} observe that
	\begin{align*}
	\mu \nabla \varphi &= (a \varphi - \J * \varphi + \F'(\varphi) ) \nabla \varphi \\
	& = \nabla \left(\F (\varphi) + a \frac{\varphi^2}{2}\right) -\nabla a \frac{\varphi^2}{2} - (\J * \varphi) \nabla \varphi.
	\end{align*}
	Hence we have
	\begin{align*}
	-(\mu \nabla \varphi, \Delta \u) = (\nabla a \frac{\varphi^2}{2}, \Delta \u) + ((\J * \varphi) \nabla \varphi ,\Delta \u).
	\end{align*}
Then, using integration by parts, H\"older's inequality and Young's inequality we get 
\begin{align} 
|(\mu \nabla \varphi, \Delta \u)| 
& \leq \frac{1}{2}\|\nabla a \|_{L^\infty} \|\varphi^2\| \| \Delta \u\| + \|\nabla \J * \varphi \|_{L^4} \|\varphi \|_{L^4} \|\Delta \u \| \no \\
& \leq \frac{1}{2}\| \nabla a \|_{L^\infty} \| \varphi \|^2_{L^4} \| \Delta \u\| + \| \nabla \J \|_{L^1} \| \varphi\|_{L^4}^2  \| \Delta \u \|  \label{136} \\
& \leq \frac{\nu}{3} \| \Delta \u \|^2 + C \| \varphi \|_{L^4}^4 ,\label{10}
\end{align}
and
\begin{align}\label{11}
|(\h,\Delta \u) | \leq \| \h \| \| \Delta \u \|
 \leq \frac{\nu}{6} \| \Delta \u \|^2 + C \| \h \|^2 .
\end{align}
Substitute \eqref{10} and \eqref{11} in \eqref{9}, we get, 
\begin{align} \label{87}
\frac{\nu}{2} \| \Delta \u \|^2 \leq C (\| \varphi \|_{L^4}^4 + \| \h \|^2). 
\end{align}
Integrate \eqref{87} from $0$ to $T$ and using Theorem \ref{regularity}, we get 
\begin{align*}
\frac{\nu}{2} \int_0^T \| \Delta \u (t) \|^2 \d t &\leq C \left( \int_0^T  \| \varphi (t)\|_{L^4}^4 \d t +\frac{3}{2 \nu} \int_0^T \| \h(t) \|^2 \d t \right) < \infty.
\end{align*}	
This proves \eqref{42}. Our next aim is to prove $\varphi \in L^\infty (0,T;H^2 (\Omega))$.
For, consider 
\begin{align}
(\Delta \mu , \Delta \varphi) &= (\Delta (a \varphi  - \J * \varphi +\F'(\varphi)) , \Delta \varphi) \no \\
&= ((a+\F''(\varphi) ) \Delta \varphi +\varphi \Delta a + 2 \nabla a \cdot \nabla \varphi - \Delta \J * \varphi + \F'''(\varphi) (\nabla \varphi)^2 , \Delta \varphi) \no\\
&\geq C_0 \| \Delta \varphi \|^2 - \frac{C_0}{16} \| \Delta \varphi\|^2 - C \|\varphi \|^2 \|\Delta a \|^2_{L^\infty} - \frac{C_0}{16} \| \Delta \varphi \|^2 - C \|\nabla a \|^2 \|\nabla \varphi \|^2 \no\\
& \ \ -\frac{C_0}{16} \| \Delta \varphi \|^2 - C \|\Delta \J\|^2 \| \varphi \|^2 -\frac{C_0}{16} \| \Delta \varphi \|^2 - C \| \nabla \varphi \|^4_{L^4} \no \\
&\geq \frac{3C_0}{4} \| \Delta \varphi \|^2 -C (\|\varphi \|_{\mathrm{V}}^2+\| \nabla \varphi \|^4_{L^4)}  .\label{14}
\end{align}
and
\begin{align}\label{15}
|(\Delta \mu, \Delta \varphi)| \leq \| \Delta \mu \| \| \Delta \varphi\| \leq \frac{C_0}{4} \| \Delta \varphi\|^2 + C  \| \Delta \mu \|^2.
\end{align}
Using \eqref{14} and \eqref{15} we get,
\begin{align} \label{140}
\frac{C_0}{2} \| \Delta \varphi \|^2 \leq C (\| \Delta \mu \|^2 +\|\varphi \|_{\mathrm{V}}^2+\| \nabla \varphi \|^4_{L^4}).
\end{align} 
Observe that we can find bound for the RHS by finding the bounds for the terms $\|\Delta \mu\| $ and $\|\nabla \varphi\|_{L^4} $.
Now we prove  that $  \Delta \mu \in L^\infty(0,T;H)$ and $\nabla \varphi \in L^\infty (0,T; L^4(\Omega))$. In fact we prove that $\nabla \varphi \in L^\infty (0,T; L^p(\Omega))$ for $2 \leq p \leq \infty$. From \eqref{16}, using H\"older inequality and \eqref{poin} we have
\begin{align}
\| \Delta \mu \| &\leq \| \varphi_t \| + \| \u \cdot \nabla \varphi \|, \no \\
&\leq \| \varphi_t \| + \| \u \|_{L^4} \|\nabla \varphi \|_{L^4} ,\no  \\
&\leq \| \varphi_t \| + C\|\nabla \u \| \|\nabla \varphi \|_{L^4} . \label{143}
\end{align} 
Since we have from Theorem \ref{regularity} that $\u \in L^\infty(0,T; \V_{\text{div}})$, we only have to prove that $\varphi_t \in L^\infty(0,T; H)$.
Before that we prove that $\varphi_t \in \L^2(0,T;H)$. Taking inner product of \eqref{16} with $\mu_t$, we get
\begin{align}
0 &= (\varphi_t, \mu_t) + (\u \cdot \nabla \varphi,\mu_t)  + \frac{1}{2} \frac{\d}{\d t} \| \nabla \mu \|^2 ,\no \\
&= ( (a +\F''(\varphi) )\varphi_t,\varphi_t)-(\varphi_t,\J * \varphi_t)+ (\u \cdot \nabla \varphi, \mu_t) + \frac{1}{2} \frac{\d}{\d t} \| \nabla \mu \|^2 \label{17}.
\end{align}
Now,
\begin{align}
|(\u \cdot \nabla \varphi, \mu_t)| &= |(\u \cdot \nabla \varphi, a \varphi_t  - \J * \varphi_t +\F''(\varphi) \varphi_t)| \no \\
&\leq \| a\|_{L^\infty} \| \varphi_t\| \|\u \|_{\mathbb{L}^\infty} \|\nabla \varphi \| + \| \J\|_{L^1} \| \varphi_t\| \|\u \|_{\mathrm{L}^\infty} \|\nabla \varphi \|+ C \| \varphi_t\| \|\u \|_{\mathbb{L}^\infty} \|\nabla \varphi \| \no \\
&\leq \frac{C_0}{12} \| \varphi_t\|^2 + C\|\u \|^2_{\H^2} \|\nabla \varphi \|^2 + \frac{C_0}{12} \| \varphi_t\|^2+ \|\u \|^2_{\H^2} \|\nabla \varphi \|^2 \no \\
& \qquad \qquad +\frac{C_0}{12} \| \varphi_t\|^2 + C \|\u \|^2_{\H^2} \|\nabla \varphi \|^2  \no \\
& \leq \frac{C_0}{4}\| \varphi_t\|^2 + C \|\u \|^2_{\H^2} \|\nabla \varphi \|^2, \label{18}
\end{align}
and 
\begin{align}
(\varphi_t,\J * \varphi_t) &= (-\u \cdot \nabla \varphi + \Delta \mu ,\J * \varphi_t)\no \\ 
& \leq |(\u \cdot \nabla \varphi, \J * \varphi_t )|+ |(\nabla \mu, \nabla \J * \varphi_t)| \no \\
& \leq \| \u \|_{L^\infty} \| \nabla \varphi\| \| \J\|_{L^1} \| \varphi_t \| + \| \nabla \mu\| \| \nabla \J * \varphi_t\| \no \\
& \leq \frac{C_0}{8} \| \varphi_t\|^2 + C\| \u \|^2_{\H^2} \| \nabla \varphi\|^2 + \frac{C_0}{8} \| \varphi_t\|^2 + C \| \nabla \mu\|^2 \no \\
& \leq \frac{C_0}{4} \| \varphi_t\|^2 + C (\| \u \|^2_{\H^2} \| \nabla \varphi\|^2 + \| \nabla \mu\|^2). \label{19}
\end{align}
Substituting \eqref{18} and \eqref{19} in \eqref{17} and using (\textbf{H3}) we get, 
\begin{align*}
\frac{1}{2} \frac{\d}{\d t} \| \nabla \mu \|^2 + \frac{C_0}{2} \|\varphi_t \|^2 \leq C (\| \u \|^2_{\H^2} \| \nabla \varphi\|^2 +\| \nabla \mu\|^2).
\end{align*}
Integrating from $0$ to $t$, we get,
\begin{align}
\frac{1}{2} \| \nabla \mu(t) \|^2 &+ \frac{C_0}{2}\int_0^t \|\varphi_s (s)\|^2 \d s \no \\
&\leq \frac{1}{2} \| \nabla \mu_0 \|^2 + C \int_0^t \| \u(s) \|^2_{\H^2} \| \nabla \varphi(s)\|^2 \d s+C \int_0^t \| \nabla \mu(s)\|^2 \d s .\label{23}
\end{align} 
Observe that,
\begin{align}
(\nabla \mu, \nabla \varphi)& = (a \nabla \varphi + \nabla a \varphi - \nabla \J * \varphi + \F''(\varphi) \nabla \varphi) \no \\
& \geq C_0 \| \nabla \varphi\|^2 - 2 \|\nabla \J \|_{L^1} \| \nabla \varphi\| \| \varphi\| \no \\
& \geq \frac{C_0}{2} \| \nabla \varphi \|^2 - C \| \nabla \varphi\|^2, \label{20}
\end{align}
and 
\begin{align}
(\nabla \mu, \nabla \varphi) \leq \frac{C_0}{4} \| \nabla \varphi \|^2 + C \| \nabla \mu \|^2. \label{21}
\end{align}
From \eqref{20} and \eqref{21} we get,
\begin{align}
\| \nabla \mu \|^2 \geq \frac{C_0}{4} \| \nabla \varphi\|^2 - C \| \varphi \|^2. \label{22}
\end{align}
Using \eqref{22}, from \eqref{23} we get
\begin{align*}
\frac{1}{2} \| \nabla \mu(t) \|^2 \leq \frac{1}{2} \| \nabla \mu_0 \|^2 + C\int_0^t (\| \u(s) \|^2_{\H^2}+1) \| \nabla \mu(s) \|^2 \d s + C \int_0^t \| \varphi(s) \|^2 \d s.
\end{align*}
From Gronwall's lemma we have
\begin{align} \label{41}
\nabla \mu \in L^\infty (0,T;H), \quad \forall \, T>0.
\end{align}
So from \eqref{22}, we conclude 
\begin{align} \label{137}
\varphi \in L^\infty (0,T; V).
\end{align}
Moreover from \eqref{23}, we get that 
\begin{align} \label{36}
\varphi_t \in L^2(0,T;H), \quad \forall \, T>0.
\end{align}
Now we prove that $ \varphi_t \in \L^\infty(0,T;H) $. Differentiate \eqref{16} and \eqref{op1} with respect to $t$ and take $L^2$ inner product with $\mu_t$ and $\u_t$ respectively,
\begin{align}
(\varphi_{tt}, \mu_t) + (\u_t \cdot \nabla \varphi, \mu_t)+(\u \cdot \nabla \varphi_t, \mu_t) &= - \|\nabla \mu_t\|^2, \label{33} \\
\nu \|\nabla \u_t\|^2 + \|\u_t\|^2 + (\nabla \pi_t, \u_t) &= (\mu_t \nabla \varphi, \u_t)+(\mu \nabla \varphi_t, \u_t) + (\h_t, \u_t) .\label{32}
\end{align}
Adding \eqref{32} and \eqref{33}, using the fact that $(\nabla \pi_t, \u_t)= (\pi_t, (\text{div}\, \u)_t) =0$, we get that
\begin{align}
(\varphi_{tt}, \mu_t)+ \nu \|\nabla \u_t\|^2 + \|\u_t\|^2 +\|\nabla \mu_t\|^2+(\u \cdot \nabla \varphi_t, \mu_t) = (\mu \nabla \varphi_t, \u_t) + \langle\h_t, \u_t \rangle. \label{34}
\end{align}
Now observe that
\begin{align}
(\varphi_{tt} ,\mu_t )&=  (\varphi_{tt},a \varphi_t - \J*\varphi_t + \F''(\varphi) \varphi_t) \no \\
&= \left(\frac{1}{2} \frac{\d}{\d t} \int_\Omega a \varphi_t^2 \right) -( - \u_t \cdot \nabla \varphi - \u \cdot \nabla \varphi_t + \Delta \mu_t,\J*\varphi_t) + \int_\Omega \F''(\varphi) \varphi_t \varphi_{tt} \no \\
&= \frac{1}{2} \frac{\d}{\d t} \left(\int_\Omega (a + \F''(\varphi))\varphi_t^2 \right) +( \u_t \cdot \nabla \varphi ,\J*\varphi_t)+( \u \cdot \nabla \varphi_t,\J*\varphi_t) \no \\
& \hspace{5cm} - (\Delta \mu_t,\J*\varphi_t) + \frac{1}{2}\int_\Omega \F'''(\varphi) \varphi_t^3 .\label{26}
\end{align}
Using \eqref{26} in \eqref{34}, we get 
\begin{align}
&\frac{1}{2} \frac{\d}{\d t} \left(\int_\Omega (a + \F''(\varphi))\varphi_t^2 \right) +  \nu \|\nabla \u_t\|^2 + \|\u_t\|^2 +\|\nabla \mu_t\|^2 \no \\
& \quad= (\mu \nabla \varphi_t, \u_t)-(\u \cdot \nabla \varphi_t, \mu_t) + \langle \h_t, \u_t\rangle -( \u_t \cdot \nabla \varphi ,\J*\varphi_t)-( \u \cdot \nabla \varphi_t,\J*\varphi_t) \no \\
& \hspace{2cm}+(\Delta \mu_t,\J*\varphi_t)-\frac{1}{2}\int_\Omega \F'''(\varphi) \varphi_t^3. \label{142}
\end{align}
Before we estimate right hand side terms of the above equality, we estimate $\nabla \varphi$ with $\nabla \mu$ in $L^p$ for $2 \leq p < \infty$ as follows
\begin{align*}
\int_\Omega \nabla \varphi |\nabla \varphi|^{p-2} \cdot \nabla \mu &= \int_\Omega \nabla \varphi |\nabla \varphi|^{p-2} \cdot (a \nabla \varphi + \varphi \nabla a - \nabla \J*\varphi + \F''(\varphi) \nabla \varphi) \\
&= \int_\Omega (a + \F''(\varphi)) |\nabla \varphi|^p + \int_\Omega (\varphi \nabla a - \nabla \J * \varphi) |\nabla \varphi|^{p-1}.
\end{align*}
Using \eqref{H3}, we get 
\begin{align*}
C_0 \|\nabla \varphi\|_{L^p}^p &\leq  \int_\Omega \nabla \varphi |\nabla \varphi|^{p-2} \cdot \nabla \mu - \int_\Omega \varphi \nabla a|\nabla \varphi|^{p-1} + \int_\Omega (\nabla \J * \varphi) |\nabla \varphi|^{p-1}\\
& \leq \| \nabla \mu \|_{L^p} \| \nabla \varphi \|_{L^p}^{p-1} + (\| \varphi \|_{L^p} \| \nabla a \|_{L^\infty} + \| \nabla \J \|_{L^1} \|\varphi\|_{L^p}) \| \nabla \varphi\|_{L^p}^{p-1}\\
& \leq \frac{C_0}{4} \| \nabla \varphi \|_{L^p}^p + C \| \nabla \mu \|_{L^p}^p+\frac{C_0}{4} \| \nabla \varphi \|_{L^p}^p + C \|\varphi\|_{L^p}^p.
\end{align*}
Therefore, we get
\begin{align} \label{27}
\|\nabla \varphi\|_{L^p} \leq C \| \nabla \mu \|_{L^p} + C.
\end{align}
where $C$  depends on $p$. Now we estimate $\| \nabla \mu \|_{L^p}$ in terms of $\| \varphi_t \|$. Using Gagliardo-Nirenberg inequality we get 
\begin{align*}
\|\nabla \mu \|_{L^p} &\leq C \| \nabla \mu \|^{2/p} \|\nabla \mu \|_{H^1}^{1-2/p}\\
& \leq C\| \nabla \mu \|^{2/p} \| \mu \|_{H^2}^{1-2/p}.
\end{align*}
Now using the fact that $\| \mu \|_{H^2} \cong \|\Delta \mu + \mu \| $ we infer that,
\begin{align*}
\|\nabla \mu \|_{L^p} &\leq C \| \nabla \mu \|^{2/p} (\| \Delta \mu \|^{1-2/p}+ \|\mu\|^{1-2/p} )\\
& \leq C \| \nabla \mu \|^{2/p} ( (\| \varphi_t\|^{1-2/p} + \| \u \cdot \nabla \varphi \|^{1-2/p})+ \|\mu\|^{1-2/p} ). \\
\end{align*}
From \eqref{41} and H\"older inequality we get
\begin{align*}
\|\nabla \mu \|_{L^p} \leq C (\| \varphi_t \|^{1-2/p} + \|\u\|_{L^q}^{1-2/p} \|\nabla \varphi \|_{L^p}^{1-2/p} + 1).
\end{align*}
where $\frac{1}{2}=\frac{1}{p}+\frac{1}{q}$.
By using \eqref{27} and Holder inequality we get
\begin{align*}
\|\nabla \mu \|_{L^p} \leq \frac{1}{2}\|\nabla \mu \|_{L^p}  +C (\| \varphi_t \|^{1-2/p} + \|\u\|_{L^q}^{p/2} +1).
\end{align*}
Finally we get 
\begin{align} \label{37}
\| \nabla \mu \|_{L^p} \leq C ( 1 + \| \varphi_t\|^{1-2/p}).
\end{align}
Using \eqref{37} we estimate $\nabla \varphi_t$ by $\nabla \mu_t$. For, consider 
\begin{align}
(\nabla \mu_t ,\nabla \varphi_t) &= (a \nabla \varphi_t + \varphi_t \nabla a - \nabla \J *\varphi_t + \F''(\varphi) \nabla \varphi_t + \F'''(\varphi) \varphi_t \nabla \varphi , \nabla \varphi_t) \no \\
& \geq C_0 \|\nabla \varphi_t\|^2+(\nabla a \varphi_t - \nabla \J * \varphi_t ,\nabla \varphi_t) + \int_\Omega \F'''(\varphi) \varphi_t \nabla \varphi \cdot \nabla \varphi_t. \label{28}
\end{align}
 The last term of the above equation can be estimated  using Gagliardo-Nirenberg inequality and from \eqref{27} as,
\begin{align}
\int_\Omega \F'''(\varphi) \varphi_t \nabla \varphi \cdot \nabla \varphi_t &\leq C \| \varphi_t \|_{L^6} \| \nabla \varphi \|_{L^3} \|\nabla \varphi_t \| \no \\
&\leq C (\| \nabla \varphi_t \|^{2/3} \|\varphi_t\|^{1/3} + \|\varphi_t\|)(1 + \|\varphi_t\|^{1/3}) \|\nabla \varphi_t \| \no \\
&\leq C  (\| \nabla \varphi_t \|^{5/3} \| \varphi_t \|^{1/3} + \| \nabla \varphi_t \|^{5/3} \| \varphi_t \|^{2/3}+ \| \nabla \varphi_t \| \| \varphi_t \| \no \\
& \hspace{2cm}+\| \nabla \varphi_t \|\| \varphi_t \|^{4/3})\no\\
&\leq \frac{C_0}{16} \| \nabla \varphi_t \|^2 + C \| \varphi_t\|^2 + \frac{C_0}{16} \| \nabla \varphi_t \|^2 + C \| \varphi_t \|^4 \no \\
& \quad + \frac{C_0}{16} \| \nabla \varphi_t \|^2 + C \| \varphi_t \|^2 + \frac{C_0}{16} \| \nabla \varphi_t \|^2 + C \| \varphi_t \|^{8/3}\no\\ 
&\leq \frac{C_0}{4} \| \nabla \varphi_t \|^2 + C \| \varphi_t\|^2 + C \| \varphi_t \|^4 + C . \label{29}
\end{align}
Using \eqref{H3}, from \eqref{28} and \eqref{29}, we get that
\begin{align}
(\nabla \mu_t ,\nabla \varphi_t) &\geq C_0 \| \nabla \varphi_t \|^2 -\frac{C_0}{8} \| \nabla \varphi_t \|^2 - C \| \nabla a \|_{L^\infty}\|\varphi_t \|^2 -\frac{C_0}{8} \|\varphi_t \|^2  \no\\
& \hspace{0.5cm}- \frac{C_0}{4} \| \nabla \varphi_t \|^2 -C \| \varphi_t\|^2 - C \|\nabla \J\|_{L^1}\|\varphi_t \|^2-C \| \varphi_t \|^4 \no \\
&\geq \frac{C_0}{2} \| \nabla \varphi_t \|^2 - C \|\varphi_t \|^2 - C \| \varphi_t \|^4-C. \label{31}
\end{align}
From the estimate
\begin{align} \label{30}
|(\nabla \mu_t ,\nabla \varphi_t)| \leq \|\nabla \mu_t \| \| \nabla \varphi_t\| \leq \frac{1}{C_0} \|\nabla \mu_t \|^2 + \frac{C_0}{4} \| \nabla \varphi_t \|^2
\end{align}
and \eqref{31}, we get
\begin{align}
\| \nabla \varphi_t \|^2 \leq \frac{4}{C_0^2} \|\nabla \mu_t \|^2+C \|\varphi_t \|^2 +C \|\varphi_t \|^4+C.\label{154}
\end{align}
Using H\"older inequality and Gagliardo-Nirenberg inequality and \eqref{154} we get,
\begin{align}
|(\mu \nabla \varphi_t, \u_t)| &=|(\nabla \mu, \varphi_t \u_t)| \no \\
&\leq \| \nabla \mu\|_{L^6} \| \varphi_t \|_{L^3} \| \u_t\| \no\\
&\leq C (1 + \|\varphi_t\|^{2/3})(\|\varphi_t\|^{2/3} \|\nabla\varphi_t\|^{1/3} + \|\varphi_t\| ) \| \u_t\|\no\\
&\leq C (\|\varphi_t\|^{2/3} \|\nabla\varphi_t\|^{1/3}+ \|\varphi_t\| ^{4/3}\|\nabla \varphi_t\|^{1/3}  + \|\varphi_t\| +\|\varphi_t\|^{5/3}  ) \| \u_t\|\no \\
&\leq \frac{1}{6} \| \u_t\|^2 +C (\|\varphi_t\|^{4/3} \|\nabla\varphi_t\|^{2/3}+ \|\varphi_t\| ^{8/3}\|\nabla \varphi_t\|^{2/3}  + \|\varphi_t\|^2 +\|\varphi_t\|^{10/3}  ) \no\\
&\leq \frac{1}{6} \|\u_t\|^2 +  \frac{C_0^2}{64} \| \nabla \varphi_t\|^2 + C \|\varphi_t\|^2  +\frac{C_0^2}{64} \| \nabla \varphi_t\|^2 + C \|\varphi_t\|^4 \no \\
& \hspace{2cm}+ \|\varphi_t\|^2 +\|\varphi_t\|^{10/3} \no \\
&\leq  \frac{1}{6} \|\u_t\|^2 +  \frac{C_0^2}{32} \| \nabla \varphi_t\|^2  + C \|\varphi_t\|^2 +C \|\varphi_t\|^4  + C \no \\
&\leq \frac{1}{6} \|\u_t\|^2 + \frac{1}{8} \|\nabla \mu_t \|^2 + C \|\varphi_t\|^2 + C\|\varphi_t\|^4 +C. \label{155}
\end{align}
Using integration by parts and Holder inequality we get,
\begin{align}
|(\u \cdot \nabla \varphi_t, \mu_t)| &=|(\u \varphi_t, \nabla \mu_t)| \no \\
&\leq \|\u\|_{L^\infty} \|\varphi_t\| \|\nabla \mu_t\|\no  \\
&\leq \frac{1}{4} \|\nabla \mu_t\|^2 + C \|\u\|^2_{\H^2} \|\varphi_t\|^2, \label{156}
\end{align}
\begin{align}
|\langle\h_t, \u_t\rangle| \leq \|\h_t\|_{\V'_{div}} \|\nabla \u_t\| \leq \frac{\nu}{2} \| \nabla \u_t\|^2 + C \|\h_t\|_{\V'_{\mathrm{div}}}^2, \label{157}
\end{align}
\begin{align}
|(\nabla \J*\varphi_t,\u_t\varphi)| &\leq \|\nabla \J*\varphi_t\|\|\u_t\|\|\varphi\|_{L^\infty} \no \\
&\leq \frac{1}{6} \|\u_t\|^2 + C \|\varphi\|^2_{\infty} \|\varphi_t\|^2,  \label{158}
\end{align}
\begin{align}
|(\nabla \J*\varphi_t, \u \varphi_t)| \leq \|\nabla \J * \varphi_t\| \|\u\|_{L^\infty} \|\varphi_t\| \leq C \|\u\|_{\H^2} \|\varphi_t\|^2, \label{159}
\end{align}
\begin{align}
|(\nabla \J*\varphi_t, \nabla \mu_t)| &\leq \|\nabla \mu_t\| \|\nabla \J* \varphi_t\|, \no \\
&\leq \frac{1}{4} \|\nabla \mu_t\|^2 + C  \|\varphi_t\|^2. \label{160}
\end{align}
Using Gagliardo-Nirenberg inequality we get
\begin{align}
\int_\Omega \F'''(\varphi) \varphi_t^3 &\leq C \|\varphi_t\|^3_{L^3} \leq C(\|\nabla \varphi_t\|^{1/3} \|\varphi_t\|^{2/3} +  \|\varphi_t\|)^3 \no \\
&\leq C (\|\nabla \varphi_t\| \|\varphi_t\|^2 + \|\varphi_t\|^3 )\no \\
& \leq \frac{C_0^2}{32} \|\nabla \varphi_t\|^2 + C \|\varphi_t\|^4 \no \\
&\leq \frac{C_0^2}{32}\left( \frac{4}{C_0^2} \|\nabla \mu_t\|^2 + C \|\varphi_t\|^4 +C \right) + C \|\varphi_t\|^4 \no \\
&\leq \frac{1}{8} \|\nabla \mu_t\|^2 + C \|\varphi_t\|^4 + C \label{161} .
\end{align}
Substituting above estimates \eqref{155}-\eqref{161} in \eqref{142}, we get 
\begin{align*}
&\frac{1}{2} \frac{\d}{\d t} \left(\int_\Omega (a + \F''(\varphi))\varphi_t^2 \right)+\nu \|\nabla \u_t\|^2 + \frac{1}{2} \|\u_t\|^2 + \frac{1}{4} \|\nabla \mu_t\|^2 \\
& \quad\leq C (\|\u\|^2_{\H^2} + 1  )\|\varphi_t\|^2 + C \|\varphi_t\|^4 +C \|\h_t\|_{\V'_{\mathrm{div}}}^2 + C.
\end{align*}
Hence we obtain
\begin{align} \label{35}
\frac{1}{2} \frac{\d}{\d t} \left(\int_\Omega (a + \F''(\varphi))\varphi_t^2 \right) \leq f(t) \|\varphi_t\|^2 + C \|\varphi_t\|^4 +C \|\h_t\|_{\V'_{\mathrm{div}}}^2 + C.
\end{align}
where $f(t) =C (\|\u\|^2_{\H^2} + 1  ) $. Let us multiply \eqref{35} by $(1+ \int_\Omega(a+\F''(\varphi))\varphi_t^2)^{-1}$. Then we get
\begin{align*}
\frac{1}{2} \frac{\d}{\d t} \log\left(1+\int_\Omega(a+\F''(\varphi))\varphi_t^2 \right) &\leq \frac{1}{C_0} f(t) + \frac{C \|\varphi_t\|^4}{1+\int_\Omega(a+\F''(\varphi)\varphi_t^2)} + C \|\h_t\|_{\V'_{\mathrm{div}}}^2 + C, \\
&\leq \frac{1}{C_0} f(t) + C \|\varphi_t\|^2 + C \|\h_t\|_{\V'_{\mathrm{div}}}^2 + C. 
\end{align*}
Now integrate from $0$ to $t$ to obtain
\begin{align*}
\frac{1}{2} \log \left( 1 + \int_\Omega (a + \F''(\varphi))\varphi_t^2 \right) &\leq \frac{1}{2} \log \left( 1 + \int_\Omega (a + \F''(\varphi_0))\varphi_t^2(0) \right) \\
&+ \frac{1}{C_0} \int_0^t f(t) + C \int_0^t \|\varphi_t\|^2 
+ C \int_0^t \|\h_t\|_{\V'_{div}}^2 +C .
\end{align*}
Since $f \in L^1(0,T)$, $\h_t \in L^2(0,T; \G_{\text{div}})$, and $\varphi(0) \in H^2(\Omega)$, from \eqref{36} we can say that
\begin{align} \label{38}
\varphi_t \in L^\infty(0,T; H), \quad \forall \,T>0.
\end{align}
Using \eqref{38} in \eqref{27} and \eqref{37} we have
\begin{align} \label{39}
\nabla \varphi , \nabla \mu \in L^\infty (0,T; L^p(\Omega)) \quad \forall \, T>0, \ 2 \leq p < \infty.
\end{align}
Substituting \eqref{38} and \eqref{39} in \eqref{143} we get that 
\begin{align} \label{141}
\Delta \mu \in L^\infty(0,T; L^2(\Omega)).
\end{align}
Hence, using \eqref{137},\eqref{39} and \eqref{141} in \eqref{140} we infer that
\begin{align*}
\varphi \in L^\infty (0,T; \mathrm{H}^2(\Omega)).
\end{align*}
\end{proof}

\begin{remark}
Substituting \eqref{136} and \eqref{11} in \eqref{9} we get 
\begin{align*}
\nu \| \Delta \u \|^2 + \| \nabla \u \|^2 \leq \frac{1}{2}\| \nabla a \|_{L^\infty} \| \varphi \|^2_{L^4} \| \Delta \u\| + \| \nabla \J \|_{L^1} \| \varphi\|_{L^4}^2  \| \Delta \u \|+\| \h \| \| \Delta \u \|.
\end{align*}
On using Sobolev inequality and \eqref{137}, this implies
\begin{align*}
\nu \|\Delta \u \| \leq C \| \varphi\|_V^2 +\| \h \| .
\end{align*}
Using \eqref{137}, we infer that 
\begin{align*}
\Delta \u \in L^\infty (0,T; \G_{\mathrm{div}}),
\end{align*}
and using Theorem \ref{regularity} we get
\begin{align*}
\u \in L^\infty(0,T; \H^2).
\end{align*}
\end{remark}

\begin{theorem} \label{diff1}
Suppose that the hypothesis (\textbf{H1})-(\textbf{H5}) is fulfilled. Let $\h_1, \h_2 \in \mathcal{U}$ and let $[ \varphi_1,\u_1]$ and $[\varphi_2,\u_2]$ be two unique strong solutions of the system \eqref{16}-\eqref{op2} corresponding to $\h_1$ and $\h_2$ respectively with the same initial data satisfying \eqref{43}-\eqref{42}. Then there exists a constant $C>0$ such that 
\begin{align}
 \|\varphi_1-\varphi_2\|^2_{L^\infty([0,t];H)}+\| \nabla (\varphi_1-\varphi_2)\|^2_{L^2(0,t;H)} +\|\u_1-\u_2\|^2_{L^2(0,t; \V_{\mathrm{div}})} \no \\
 \leq C \|\h_1-\h_2\|^2_{L^2(0,T;\V_{\mathrm{div}}')} \label{bound1}
\end{align}
for every $t \in [0,T]$.
\end{theorem}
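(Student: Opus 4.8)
The plan is to introduce difference functions and close a single Gronwall inequality in the $H$-norm of the phase field, treating the (elliptic) Brinkman equation statically and feeding it into the parabolic estimate. Set $\varphi=\varphi_1-\varphi_2$, $\u=\u_1-\u_2$, $\mu=\mu_1-\mu_2=a\varphi-\J*\varphi+\F'(\varphi_1)-\F'(\varphi_2)$ and $\h=\h_1-\h_2$. Subtracting the two systems, $\varphi$ and $\u$ solve
\begin{align*}
\varphi_t+\u_1\cdot\nabla\varphi+\u\cdot\nabla\varphi_2=\Delta\mu,\qquad -\nu\Delta\u+\u+\nabla\pi=\mu_1\nabla\varphi+\mu\nabla\varphi_2+\h,\quad \mathrm{div}\,\u=0,
\end{align*}
with $\partial\mu/\partial\n=0$, $\u|_{\partial\Omega}=0$ and $\varphi(0)=0$. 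Throughout I would use the a priori regularity of the two strong solutions given by Theorem~\ref{strongsol} together with $\F\in C^3$: in two dimensions $\varphi_i\in L^\infty(0,T;\mathrm{H}^2(\Omega))\hookrightarrow L^\infty(\Omega\times(0,T))$, so by \eqref{39} both $\nabla\varphi_i$ and $\nabla\mu_i$ lie in $L^\infty(0,T;L^p(\Omega))$ for every finite $p$, while $\u_i\in L^\infty(0,T;\V_{\mathrm{div}})$; moreover $\F',\F'',\F'''$ are bounded and $\F''$ is Lipschitz on the (bounded) essential range of $\varphi_1,\varphi_2$, which yields $\|\mu\|\le C\|\varphi\|$ and $|\F''(\varphi_1)-\F''(\varphi_2)|\le C|\varphi|$ a.e. All such solution-dependent quantities are absorbed into a generic constant $C$.

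The first step is a static Brinkman estimate: pair the $\u$-equation with $\u$, use $(\nabla\pi,\u)=0$, integrate by parts in $(\mu_1\nabla\varphi,\u)=-(\varphi\nabla\mu_1,\u)$, and apply H\"older's inequality together with $\|\nabla\mu_1\|_{L^4}+\|\nabla\varphi_2\|_{L^4}\le C$, $\|\mu\|\le C\|\varphi\|$ and the Ladyzhenskaya--Poincar\'e bound $\|\u\|_{\mathbb{L}^4}\le C\|\nabla\u\|$ from \eqref{poin}. Combined with $|\langle\h,\u\rangle|\le\|\h\|_{\V'_{\mathrm{div}}}\|\nabla\u\|$ and Young's inequality this gives
\begin{align}\label{plan-brink}
\|\u\|_{\V_{\mathrm{div}}}^2\le C\big(\|\varphi\|^2+\|\h\|_{\V'_{\mathrm{div}}}^2\big).
\end{align}

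The second step is the parabolic estimate: pair the $\varphi$-equation with $\varphi$. The term $(\u_1\cdot\nabla\varphi,\varphi)$ vanishes since $\mathrm{div}\,\u_1=0$, while $|(\u\cdot\nabla\varphi_2,\varphi)|\le\|\u\|_{\mathbb{L}^4}\|\nabla\varphi_2\|_{L^4}\|\varphi\|\le\tfrac{\nu}{4}\|\nabla\u\|^2+C\|\varphi\|^2$. For the right-hand side I write $\nabla\mu=(a+\F''(\varphi_1))\nabla\varphi+\varphi\nabla a-(\nabla\J)*\varphi+(\F''(\varphi_1)-\F''(\varphi_2))\nabla\varphi_2$; the coercivity hypothesis (\textbf{H3}) gives $((a+\F''(\varphi_1))\nabla\varphi,\nabla\varphi)\ge c_0\|\nabla\varphi\|^2$, and the three remaining terms are controlled, using $\|\nabla a\|_{L^\infty}$, $\|\nabla\J\|_{L^1}$, $|\F''(\varphi_1)-\F''(\varphi_2)|\le C|\varphi|$, the two-dimensional inequality $\|\varphi\|_{L^4}^2\le C\|\varphi\|\,\|\varphi\|_{H^1}$ and Young's inequality, by $\tfrac{c_0}{2}\|\nabla\varphi\|^2+C\|\varphi\|^2$; hence $-(\nabla\mu,\nabla\varphi)\le-\tfrac{c_0}{2}\|\nabla\varphi\|^2+C\|\varphi\|^2$. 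Inserting \eqref{plan-brink} to dispose of the $\|\nabla\u\|^2$ contribution yields
\begin{align*}
\frac{d}{dt}\|\varphi\|^2+c_0\|\nabla\varphi\|^2\le C\|\varphi\|^2+C\|\h\|_{\V'_{\mathrm{div}}}^2 .
\end{align*}

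Since $\varphi(0)=0$, Gronwall's lemma then gives $\|\varphi\|_{L^\infty(0,t;H)}^2\le C\|\h\|_{L^2(0,T;\V'_{\mathrm{div}})}^2$ for all $t\in[0,T]$; integrating the differential inequality in time bounds $\|\nabla\varphi\|_{L^2(0,t;H)}^2$ by the same quantity, and feeding both back into \eqref{plan-brink} integrated over $[0,t]$ controls $\|\u\|_{L^2(0,t;\V_{\mathrm{div}})}^2$; adding the three bounds gives \eqref{bound1}. I expect the main obstacle to be the bookkeeping for the nonlinear and coupling terms: one must check that every contribution from $\F'(\varphi_1)-\F'(\varphi_2)$ and from the convective and capillary cross terms $\u\cdot\nabla\varphi_2$ and $\mu_i\nabla\varphi$ splits into a part dominated by $\|\varphi\|^2$ (harmless for Gronwall) plus arbitrarily small multiples of $\|\nabla\varphi\|^2$ and $\|\nabla\u\|^2$ that are absorbed via (\textbf{H3}) and \eqref{plan-brink}. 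Closing these estimates in the $H$-norm of $\varphi$ is exactly what forces the use of the strong-solution regularity $\nabla\varphi_i,\nabla\mu_i\in L^\infty(0,T;L^p)$ and $\varphi_i\in L^\infty(\Omega\times(0,T))$; for merely weak solutions one can only close the weaker $V'$-estimate of Theorem~\ref{weakunique}.
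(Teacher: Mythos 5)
Your proposal is correct and follows essentially the same route as the paper: test the difference equations with $\varphi$ and $\u$, use (\textbf{H3}) for coercivity of $(\nabla\mu,\nabla\varphi)$, exploit $\|\mu\|\le C\|\varphi\|$, the divergence-free cancellation, Ladyzhenskaya, and the strong-solution bounds $\nabla\varphi_i,\nabla\mu_i\in L^\infty(0,T;L^4)$ before closing with Gronwall. The only differences are cosmetic — you split the cross terms as $\u\cdot\nabla\varphi_2$, $\mu_1\nabla\varphi+\mu\nabla\varphi_2$ where the paper uses $\u\cdot\nabla\varphi_1$, $\mu\nabla\varphi_1+\mu_2\nabla\varphi$, and you absorb the Gronwall coefficient into a constant rather than keeping it as an $L^1(0,T)$ weight.
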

\begin{proof}
 Set $\varphi= \varphi_1 - \varphi_2$, $\u= \u_1-\u_2$ and $\h = \h_1 - \h_2$, then we obtain
\begin{align}
\langle \varphi_t , \psi \rangle + (\nabla \mu, \nabla \psi) &= (\u \varphi_1, \nabla \psi) + (\u_2 \varphi, \nabla
\psi), \ \forall \psi \in V, \label{47} \\
\nu (\nabla \u , \nabla \v) + (\u , \v) &= (\mu \nabla \varphi_1, \v) + (\mu_2 \nabla \varphi, \v) + (\h, \v), \ \forall \v \in \V_{\mathrm{div}}, \label{48} \\
\varphi(0) &= \varphi_{01}-\varphi_{02},\no
\end{align}
where 
$ \mu = a \varphi - \mathrm{J}* \varphi + \F'(\varphi_1) - \F' (\varphi_2).$

Let us substitute $\v=\u$ and $\psi =\varphi$ in \eqref{47} and \eqref{48} respectively. We obtain
\begin{align}
\frac{1}{2} \frac{\d}{\d t}\|\varphi (t)\|^2 + (\u \nabla \varphi_1 , \varphi)+ (\u_2 \nabla \varphi, \varphi)&=  (\Delta \mu , \varphi),  \label{56}\\
\nu\|\nabla \u\|^2 + \|\u \|^2 &= (\mu \nabla \varphi_1,\u )+(\mu_2 \nabla \varphi,\u) +(\h,\u) \label{54}.
\end{align}
Now we estimate terms in \eqref{56}. We denote by $C=C(\J, \F, \nu, \Omega, C_0)$.
Using H\"older and Ladyzhenskaya inequalities
\begin{align}
|(\u \nabla \varphi_1 , \varphi)| &\leq \|\u\|_{L^4}\| \nabla \varphi_1\|_{L^4} \| \varphi\|\no \\
& \leq \|\nabla \u\| \| \nabla \varphi_1\|_{L^4} \| \varphi\| \no  \\
& \leq \frac{\nu}{5} \|\nabla \u\|^2 + C \| \nabla \varphi_1\|^2_{L^4} \| \varphi\|^2. \label{91}
\end{align}
Observe that, 
\begin{align*}
\int_\Omega \u_2 (\nabla \varphi) \varphi= \int_\Omega \u_2 \nabla \left(\frac{\varphi^2}{2} \right) = - \int_\Omega \mathrm{div}(\u) \frac{\varphi^2}{2} =0.
\end{align*}
Using (\textbf{H3}), we get
\begin{align}
-(\Delta \mu, \varphi)&= (\nabla \mu, \nabla \varphi) \no \\ 
&=(\nabla (a \varphi - \J*\varphi+ \F(\varphi_1)-\F(\varphi_2)), \nabla \varphi) \no \\
&=((a + \F''(\varphi_2) )\nabla \varphi, \nabla \varphi) +(\varphi \nabla a  - \nabla \J*\varphi ,\nabla \varphi) \no \\
& \quad+ ((\F''(\varphi_1) - \F''(\varphi_2)) \nabla \varphi_1,\nabla \varphi) \no \\
&\geq C_0\|\nabla \varphi\|^2  +(\varphi \nabla a  - \nabla \J*\varphi ,\nabla \varphi) + ((\F''(\varphi_1) - \F''(\varphi_2)) \nabla \varphi_1,\nabla \varphi) \label{135}
\end{align}
Right hand side terms of \eqref{135} can be estimated as follows
\begin{align}
|(\varphi \nabla a  - \nabla \J*\varphi ,\nabla \varphi) | &\leq \|\varphi\|\|\nabla a \| \|\nabla \varphi\| + \|\nabla \J\| \| \varphi\|\|\nabla \varphi\| \no \\
&\leq \frac{C_0}{8}\|\nabla \varphi\|^2 + \frac{2}{C_0} \|\nabla a \|^2 \|\varphi\|^2 + \frac{C_0}{8}\|\nabla \varphi\|^2 + \frac{2}{C_0}\|\nabla \J\|^2 \| \varphi\|^2 \no \\
&\leq \frac{C_0}{4}\|\nabla \varphi\|^2 + C\| \varphi\|^2, \label{138}
\end{align}
using Holder inequality and Gagliardo-Nirenberg inequality we get
\begin{align}
|((\F''(\varphi_1) - \F''(\varphi_2)) \nabla \varphi_1,\nabla \varphi)| &\leq C \|\varphi\|_{L^4} \|\nabla \varphi_1\|_{L^4} \|\nabla \varphi\| \no \\
& \leq C (\|\varphi\| + \|\varphi\|^{1/2} \|\nabla \varphi\|^{1/2} )\|\nabla \varphi_1\|_{L^4} \|\nabla \varphi\| \no \\
& \leq \frac{C_0}{8} \|\nabla \varphi\|^2 + C \|\nabla \varphi_1\|_{L^4}^2\|\varphi\|^2 \no \\
& \quad+ \frac{C_0}{8} \|\nabla \varphi\|^2 + C \|\nabla \varphi_1\|_{L^4}^4\|\varphi\|^2 \no \\
&\leq \frac{C_0}{4} \|\nabla \varphi\|^2 + C(\|\nabla \varphi_1\|_{L^4}^2 + \|\nabla \varphi_1\|_{L^4}^4) \|\varphi\|^2 , \label{139}
\end{align}
Substituting \eqref{138} and \eqref{139} in \eqref{135} we get
\begin{align}
-(\Delta \mu, \varphi) \geq \frac{C_0}{2} \|\nabla \varphi\|^2 -C(1+\|\nabla \varphi_1\|_{L^4}^2 + \|\nabla \varphi_1\|_{L^4}^4)\|\varphi\|^2. \label{92}
\end{align}
Using \eqref{91} and \eqref{92} in \eqref{56} we get 
\begin{align}
&\frac{1}{2} \frac{\d}{\d t}\|\varphi (t)\|^2+\frac{C_0}{2} \|\nabla \varphi\|^2 \no  \\
 & \qquad\leq \frac{\nu}{5} \|\nabla \u\|^2 +C(1+\|\nabla \varphi_1\|_{L^4}^2 + \|\nabla \varphi_1\|_{L^4}^4)\|\varphi\|^2. \label{99}
\end{align} 
Now we estimate the terms in \eqref{54}. Observe that
\begin{align}
\| {\mu} \| &= \| a \varphi - \mathrm{J}* \varphi + \F'(\varphi_1) - \F' (\varphi_2) \|\no \\
& =\| a \varphi - \mathrm{J}* \varphi + \F''(\varphi_1+\theta\varphi_2) \varphi\|\no \\
&\leq \|a\|_\infty \|\varphi\| + \| \J\|_{L^1}\|\varphi\| + C \|\varphi\| \no \\
&\leq C \|\varphi\|. \label{46}
\end{align}
Using \eqref{46} we get
\begin{align}
|(\mu \nabla \varphi_1,\u )| &\leq \|\mu\|\|\nabla \varphi_1\|_{L^4} \|\u\|_{L^4} \no \\
&\leq  \frac{\nu}{5}\|\nabla \u\|^2+ C \|\nabla \varphi_1\|^2\|\varphi\|^2, \label{95}
\end{align}
 and using integration by parts 
\begin{align}
|(\mu_2 \nabla \varphi,\u)| & =|(\varphi\nabla \mu_2  ,\u) | \no  \\
&\leq \|\varphi\|\|\nabla \mu_2\|_{L^4}\|\u\|_{L^4} \no  \\
&\leq \frac{\nu}{5} \|\nabla \u\|^2 + C \|\nabla \mu_2\|^2_{L^4}\|\varphi\|^2, \label{96}
\end{align}
also
\begin{align}
|(\h,\u)| &\leq \|\h\|_{\V_{\mathrm{div}}'}\|\nabla\u\| \no \\
&\leq \frac{\nu}{5}\|\nabla \u\|^2+ C \|\h\|^2 _{\V_{\mathrm{div}}'}. \label{97}
\end{align}
Combining \eqref{95}, \eqref{96} and \eqref{97} we get
\begin{align}
\frac{2\nu}{5}\|\nabla \u\|^2 + \|\u \|^2 = C(\|\nabla \varphi_1\|^2+\|\nabla \mu_2\|^2_{L^4}) \|\varphi\|^2 +C \|\h\|^2_{\V_{\mathrm{div}}'}. \label{98}
\end{align}
Adding \eqref{99} and \eqref{98}, we get
\begin{align}
\frac{1}{2} &\frac{\d}{\d t}  \|\varphi (t)\|^2 + \frac{C_0}{2} \|\nabla \varphi\|^2 +\frac{\nu}{5}\|\nabla \u\|^2 + \|\u \|^2 \no \\
&\leq C(1+\|\nabla \varphi_1\|_{L^4}^2 + \|\nabla \varphi_1\|_{L^4}^4+ \|\nabla \mu_2\|^2_{L^4})\|\varphi\|^2 
 + C \|\h\|^2_{\V_{\mathrm{div}}'}. \label{100}
\end{align}
By applying Gronwall's inequality
\begin{align*}
\|\varphi (t)\|^2 \leq C\exp \left( \int_0^t \alpha (s)ds \right)\|\h\|_{\L^2(0,T;\V_{\mathrm{div}}')},
\end{align*}
where $\alpha(t)=C(1+\|\nabla \varphi_1\|_{L^4}^2 + \|\nabla \varphi_1\|_{L^4}^4+ \|\nabla \mu_2\|^2_{L^4})\in L^1(0,T)$. Integrate \eqref{100} from $0$ to $t$ we arrive at
\begin{align*}
\|\varphi(t)\|^2+&\frac{C_0}{2} \int_0^t\|\nabla \varphi\|^2 +\frac{\nu}{5}\int_0^t\|\nabla \u\|^2 + \int_0^t\|\u \|^2 \\
& \qquad \leq C\left(\exp \left( \int_0^t \alpha (s) \d s \right)  \int_0^t \alpha (s) \d s + 1\right)\|\h\|_{\L^2(0,T;)\V_{\mathrm{div}}'}
\end{align*}
which gives 
\begin{align}
\|\u_1-\u_2\|^2_{L^2(0,t; \V_{\mathrm{div}})} +\|\varphi_1-\varphi_2\|^2_{C^0 (0,t;V)}+ \| \nabla (\varphi_1-\varphi_2)\|^2_{L^2(0,t;H)} \no  \\
 \leq C \|\h_1-\h_2\|^2_{\L^2(0,t;\V_{\mathrm{div}}')}.
\end{align}
for every $t \in [0,T]$. 

\end{proof}
Since for any $\h \in \mathcal{U}$ we have $\u \in L^\infty(0, T; \H^2)$, we can prove the following theorem for a higher-order estimate of $\varphi$ using the same techniques as in Lemma 2.6 in \cite{CHNS op} and estimates used in the proof of Theorem \ref{diff1}.
\begin{theorem}
Let us assume that (\textbf{H1})-(\textbf{H5}) are satisfied.  Let $\h_1, \h_2 \in \mathcal{U}$ and let $[ \varphi_1,\u_1]$ and $[\varphi_2,\u_2]$ be two unique strong solutions of the system \eqref{16}-\eqref{op2} corresponding to $\h_1$ and $\h_2$ respectively with the same initial data satisfying \eqref{43}-\eqref{42}. Then there exists a constant $C >0$ such that
\begin{align}
 \|\varphi_1-\varphi_2\|^2_{L^\infty (0,t;V)} + \|(\varphi_1)_t-(\varphi_2)_t\|^2_{L^2(0,t; H)} + \|\varphi_1-\varphi_2\|^2_{L^2(0,t;H^2)} \no \\
 +\|\u_1-\u_2\|^2_{L^2(0,t; \V_{\mathrm{div}})}\leq C \|\h_1-\h_2\|^2_{\L^2(0,T;\V_{\mathrm{div}}')}, \label{bound2}
\end{align}
for every $t \in (0,T]$.
\end{theorem}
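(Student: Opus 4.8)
The plan is to sharpen the proof of Theorem~\ref{diff1} by combining its structure with the full strong‑solution regularity available for each of the two solutions. Write $\varphi=\varphi_1-\varphi_2$, $\u=\u_1-\u_2$, $\h=\h_1-\h_2$ and $\mu=a\varphi-\J*\varphi+\F'(\varphi_1)-\F'(\varphi_2)$, so that $(\varphi,\u)$ satisfies \eqref{47}--\eqref{48} with $\varphi(0)=0$, and hence $\nabla\mu(0)=0$. By Theorem~\ref{strongsol} and the Remark following it we may use, for $i=1,2$, that $\varphi_i\in L^\infty(0,T;H^2(\Omega))\cap L^\infty(\Omega\times(0,T))$, $(\varphi_i)_t\in L^\infty(0,T;H)\cap L^2(0,T;V)$, $\u_i\in L^\infty(0,T;\H^2)\hookrightarrow L^\infty(0,T;\mathbb{L}^\infty)$ and $\nabla\varphi_i,\nabla\mu_i\in L^\infty(0,T;L^p(\Omega))$ for every $p<\infty$; and Theorem~\ref{diff1} already gives
\[
\|\varphi\|^2_{L^\infty(0,t;H)}+\|\nabla\varphi\|^2_{L^2(0,t;H)}+\|\u\|^2_{L^2(0,t;\V_{\mathrm{div}})}\le C\|\h\|^2_{L^2(0,T;\V_{\mathrm{div}}')},
\]
which settles the last term of \eqref{bound2} and furnishes the $\|\h\|$‑controlled quantities $\int_0^t\|\varphi\|^2$, $\int_0^t\|\nabla\u\|^2$ used below.

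The key step is to test \eqref{47} with $\psi=\mu_t$ (admissible since the regularity above places $\mu_t$ and both convective terms in $L^2(0,T;H)$), which gives
\[
\frac12\frac{\d}{\d t}\|\nabla\mu\|^2+(\varphi_t,\mu_t)=-(\u\cdot\nabla\varphi_1,\mu_t)-(\u_2\cdot\nabla\varphi,\mu_t).
\]
Writing $\mu_t=(a+\F''(\varphi_1))\varphi_t-\J*\varphi_t+(\F''(\varphi_1)-\F''(\varphi_2))(\varphi_2)_t$, hypothesis (\textbf{H3}) gives $((a+\F''(\varphi_1))\varphi_t,\varphi_t)\ge C_0\|\varphi_t\|^2$, and the remaining terms are absorbed exactly as in the proof of Theorem~\ref{strongsol}: for $(\varphi_t,\J*\varphi_t)$ substitute $\varphi_t=\Delta\mu-\u\cdot\nabla\varphi_1-\u_2\cdot\nabla\varphi$ in one slot and integrate by parts (as in \eqref{19}); for the term carrying $\F''(\varphi_1)-\F''(\varphi_2)$ use the local Lipschitz bound $|\F''(\varphi_1)-\F''(\varphi_2)|\le C|\varphi|$ (valid since $\varphi_i\in L^\infty$), the two‑dimensional estimate $\|\varphi\|_{L^4}^2\le C\|\varphi\|\,\|\varphi\|_V$ and $\|(\varphi_2)_t\|_{L^4}^4\le C\|(\varphi_2)_t\|_V^2\in L^1(0,T)$; and the convective terms are handled using $\nabla\varphi_1\in L^\infty(0,T;L^4)$ and $\u_2\in L^\infty(0,T;\mathbb{L}^\infty)$. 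Combining this with the difference analogue of \eqref{22}, namely $\|\nabla\varphi\|^2\le C\|\nabla\mu\|^2+C\|\varphi\|^2$ (obtained as in \eqref{135}--\eqref{92} using $\nabla\varphi_1\in L^\infty(0,T;L^4)$), and absorbing the $\|\varphi_t\|^2$ contributions into the left side, we obtain
\[
\frac{\d}{\d t}\|\nabla\mu\|^2+\frac{C_0}{4}\|\varphi_t\|^2\le\gamma(t)\big(\|\nabla\mu\|^2+\|\varphi\|^2\big)+C\|\nabla\u\|^2,
\]
with $\gamma(t)=C\big(1+\|\u_2\|^2_{\H^2}+\|(\varphi_2)_t\|_V^2\big)\in L^1(0,T)$.

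Integrating from $0$ to $t$, using $\nabla\mu(0)=0$ and the $\|\h\|$‑controlled integrals recorded above, Gronwall's lemma then gives $\|\nabla\mu\|^2_{L^\infty(0,t;H)}\le C\|\h\|^2_{L^2(0,T;\V_{\mathrm{div}}')}$; feeding this back into the same inequality gives $\|\varphi_t\|^2_{L^2(0,t;H)}\le C\|\h\|^2_{L^2(0,T;\V_{\mathrm{div}}')}$, and $\|\nabla\varphi\|^2\le C\|\nabla\mu\|^2+C\|\varphi\|^2$ upgrades Theorem~\ref{diff1} to $\|\varphi\|^2_{L^\infty(0,t;V)}\le C\|\h\|^2_{L^2(0,T;\V_{\mathrm{div}}')}$. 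For the remaining $\|\varphi\|_{L^2(0,t;H^2)}$ term one argues as in the derivation of \eqref{140}: computing $(\Delta\mu,\Delta\varphi)$, the Laplacian of the potential difference contributes $\F''(\varphi_1)\Delta\varphi+(\F''(\varphi_1)-\F''(\varphi_2))\Delta\varphi_2+\F'''(\varphi_1)(\nabla\varphi_1+\nabla\varphi_2)\cdot\nabla\varphi+(\F'''(\varphi_1)-\F'''(\varphi_2))|\nabla\varphi_2|^2$; by (\textbf{H3}) the first piece is coercive and the rest are bounded by $\frac{C_0}{4}\|\Delta\varphi\|^2+C\|\varphi\|_V^2$ using $\varphi_i\in L^\infty(0,T;H^2)$, $\nabla\varphi_i\in L^\infty(0,T;L^p)$, the local Lipschitz bounds on $\F''$ and $\F'''$, and Sobolev/Gagliardo--Nirenberg interpolation (the term with $\Delta\varphi_2$ via $\|\varphi\|_{L^\infty}\le C\|\varphi\|_V^{1-\delta}\|\varphi\|_{H^2}^{\delta}$). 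This gives $\|\Delta\varphi\|^2\le C(\|\Delta\mu\|^2+\|\varphi\|_V^2)$, while $\|\Delta\mu\|\le\|\varphi_t\|+\|\u\|_{\mathbb{L}^4}\|\nabla\varphi_1\|_{L^4}+\|\u_2\|_{\mathbb{L}^\infty}\|\nabla\varphi\|$ from the difference equation; integrating over $[0,t]$ and invoking the bounds already established yields $\|\varphi\|^2_{L^2(0,t;H^2)}\le C\|\h\|^2_{L^2(0,T;\V_{\mathrm{div}}')}$, completing \eqref{bound2}.

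The main obstacle is the bookkeeping in the key step: every cross term pairing a difference of a derivative of $\F$ ($\F'(\varphi_1)-\F'(\varphi_2)$, $\F''(\varphi_1)-\F''(\varphi_2)$, $\F'''(\varphi_1)-\F'''(\varphi_2)$) with a high‑order factor of one solution ($(\varphi_i)_t$, $\nabla\varphi_i$, $\Delta\varphi_i$, $|\nabla\varphi_i|^2$) has to be split, by H\"older, Gagliardo--Nirenberg and Young, into a small multiple of $\|\varphi_t\|^2$ or $\|\Delta\varphi\|^2$ plus $\gamma(t)\|\varphi\|^2$ or $\gamma(t)\|\nabla\mu\|^2$ with $\gamma\in L^1(0,T)$; this is where the strong‑solution regularity of $\varphi_i,\u_i$ and the $L^\infty$‑bound on $\varphi_i$ get consumed. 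The only genuinely delicate point is the term $(\F'''(\varphi_1)-\F'''(\varphi_2))|\nabla\varphi_2|^2$ in the $H^2$ estimate, which requires $\F'''$ to be locally Lipschitz (automatic for the model potential $\F(s)=(s^2-1)^2$) in order to produce the stated quadratic dependence on $\|\h\|_{L^2(0,T;\V_{\mathrm{div}}')}$ rather than a weaker, H\"older‑type one. Once these estimates are organised, the conclusion follows from Gronwall's lemma exactly as in the proofs of Theorem~\ref{diff1} and of Lemma~2.6 in \cite{CHNS op}.
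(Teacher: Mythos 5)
Your argument is correct and follows exactly the route the paper itself indicates (the paper prints no proof of this theorem, only a pointer to the estimates of Theorem \ref{diff1} and the techniques of Lemma 2.6 in \cite{CHNS op}): you test the difference equation with $\mu_t$, use (\textbf{H3}) for coercivity of $((a+\F''(\varphi_1))\varphi_t,\varphi_t)$, consume the strong-solution regularity of $\varphi_i,\u_i$ in the Gronwall weights, and recover the $H^2$ bound from the $(\Delta\mu,\Delta\varphi)$ computation as in \eqref{140}. The one caveat you flag --- that the term $(\F'''(\varphi_1)-\F'''(\varphi_2))|\nabla\varphi_2|^2$ requires $\F'''$ to be locally Lipschitz, which is not implied by (\textbf{H1})--(\textbf{H5}) together with $\F\in C^3$ --- is a genuine observation, but it is harmless in context since the paper only invokes \eqref{bound2} under (\textbf{H7}), i.e.\ $\F\in C^4$.
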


	\section{Optimal control} \label{linear}
In this section we study the optimal control problem(OCP) related to \eqref{16}-\eqref{op2} defined as
 minimizing the tracking type cost functional $\mathcal{J}$
	\begin{align}
	\mathcal{J}(\varphi,\u, \U) & := \int_0^T \| \varphi(t) - \varphi_d(t) \|^2 \d t +\int_0^T \|\u(t) -\u_d(t) \|^2 \d t \no \\
& \hspace{1cm}	+ \int_\Omega |\varphi(x,T)-\varphi_\Omega|^2 \d x + \int_0^T \|\U(t)\|^2  \d t \label{cost}
	\end{align}
	in the bounded, closed and convex set of admissible controls 
	\begin{align} \label{controlset}
	\mathcal{U} _{ad} = \left\{ \U \in \mathcal{U} \quad | \quad \U_1 (x,t) \leq \U(x,t) \leq \U_2 (x,t), \mathrm{a.e.} (x,t) \in \Omega\times (0,T) \right\}
	\end{align}
	subject to the system
	\begin{align} \label{cont1}
		\varphi_t + \u \cdot \nabla \varphi & = \Delta \mu \quad \text{in} \quad \Omega \times (0,T) \\
	\mu &= a \varphi  - \J * \varphi +\F'(\varphi)   \quad \text{in} \quad \Omega \times (0,T)\\
	-\nu \Delta \u + \u + \nabla \pi &= \mu \nabla \varphi + \U \quad \text{in} \quad \Omega \times (0,T) \label{cont3} \\
	\text{div} \, (\u) & = 0  \quad \text{in} \quad \Omega \times (0,T)\\
	\u = \frac{\partial \mu}{\partial \n}&  = 0  \quad \text{on} \quad \partial\Omega \times (0,T)\\
	\varphi (0)  & = \varphi_0 (x) \quad \text{in} \quad \Omega, \label{cont2}
	\end{align}
	where the external force $\U$ plays the role of control and $[\varphi, \u]$ solves \eqref{cont1}-\eqref{cont2}. 
We assume that the desirable concentration $\varphi_d $ and  velocity $\u_d$  belong to $ L^2(\Omega \times (0,T))$ and $ L^2(0,T; \G_{\mathrm{div}})$ respectively. Moreover, $\varphi_\Omega \in L^2(\Omega)$ and $\U_1, \U_2 \in \mathcal{U} \cap \mathbb{L}^\infty(\Omega \times (0,T))$. The optimal control problem is defined as 
\begin{align}
\min_{ \U \in {\mathcal{U}_{ad}} } \; \; \{ \mathcal{J}(\varphi,\u, \U) | (\varphi,\u, \U) \mbox{ is unique strong solution of \eqref{cont1}-\eqref{cont2}}  \} \quad \quad {\mbox(OCP)} \nonumber
\end{align}

Let us define the control to state operator $S: U \rightarrow (\varphi, \u)$  where $(\varphi, \u)$ solves   \eqref{cont1}-\eqref{cont2} with control $\U$. Note that,
\begin{align*}
S : \mathcal{U} \rightarrow \mathcal{V}:= C([0,T]; H) \cap L^2(0,T;V) \times L^2(0,T; \V_{\mathrm{div}}). 
\end{align*}

From Theorem \ref{existence}, Theorem \ref{weakunique} and Theorem \ref{strongsol} we can say that $S$ is a well defined map from $\mathcal{U}$ to $\mathcal{V}$. In fact, it is locally Lipschitz continuous. 

In this section we prove three important results. First one is to prove the existence of optimal control for the problem (OCP) defined above. The next result deals with the existence of a solution for the linearised system, linearised around the optimal state. Further, we prove that the control to state operator $S$ identified above is differentiable, and the Fr\'echet derivative of $S$ is given in terms of the solution of the linearised system. 

\subsection{Existence of Optimal Control}
\begin{theorem}\label{existenceofOptimalControl}
Suppose that the hypothesis (\textbf{H1})-(\textbf{H5}) are satisfied and assume that the admissible set of controls  $\mathcal{U}_{ad} \subset \mathcal{U}$ be as given by \eqref{controlset}. Then optimal control problem (OCP) admits a solution.
\end{theorem}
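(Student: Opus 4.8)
The plan is to run the direct method of the calculus of variations. Since $\U_1\in\mathcal{U}_{ad}$ the admissible set is nonempty and $\mathcal{J}\ge 0$, so $m:=\inf_{\U\in\mathcal{U}_{ad}}\mathcal{J}(\varphi,\u,\U)\in[0,\infty)$; I would fix a minimizing sequence $\{\U_n\}\subset\mathcal{U}_{ad}$ and let $(\varphi_n,\u_n)=S(\U_n)$ be the corresponding unique strong solutions of \eqref{cont1}--\eqref{cont2}, which exist by Theorems~\ref{existence}, \ref{weakunique} and \ref{strongsol} (the standing assumption $\varphi_0\in H^2(\Omega)\cap L^\infty(\Omega)$ is in force). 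Because $\mathcal{U}_{ad}$ is bounded in $\mathcal{U}$, $\{\U_n\}$ is bounded in $L^\infty(0,T;\G_{\mathrm{div}})$ with $\{(\U_n)_t\}$ bounded in $L^2(0,T;\V'_{\mathrm{div}})$; along a subsequence $\U_n\overset{\ast}{\rightharpoonup}\bar\U$ in $L^\infty(0,T;\G_{\mathrm{div}})$ and $(\U_n)_t\rightharpoonup(\bar\U)_t$ in $L^2(0,T;\V'_{\mathrm{div}})$, so $\bar\U\in\mathcal{U}$. The box constraint in \eqref{controlset} cuts out a convex, strongly closed (hence weakly closed) subset of $L^2(\Omega\times(0,T))$, and weak-$\ast$ convergence in $L^\infty$ implies weak convergence in $L^2$, so $\bar\U\in\mathcal{U}_{ad}$.

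Next I would push through the a priori estimates. Inspecting the proof of Theorem~\ref{strongsol}, the Remark following it, and Theorem~\ref{regularity}, every bound there is controlled by $\|\h\|_{\mathcal{U}}$ together with the fixed data $\varphi_0,\J,\F,\Omega,\nu$; since $\|\U_n\|_{\mathcal{U}}$ is uniformly bounded, the states inherit uniform bounds: $\{\varphi_n\}$ in $L^\infty(0,T;H^2(\Omega))$, $\{(\varphi_n)_t\}$ in $L^\infty(0,T;H)\cap L^2(0,T;V)$, $\{\mu_n\}$ in $L^\infty(0,T;V)$, and $\{\u_n\}$ in $L^2(0,T;\H^2)\cap L^\infty(0,T;\V_{\mathrm{div}})$. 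Using the compact embedding $H^2(\Omega)\hookrightarrow\hookrightarrow V\hookrightarrow H$ and the Aubin--Lions--Simon lemma, a further subsequence satisfies $\varphi_n\to\bar\varphi$ strongly in $C([0,T];V)$ --- hence $\nabla\varphi_n\to\nabla\bar\varphi$ in $C([0,T];H)$, $\varphi_n(T)\to\bar\varphi(T)$ in $H$, and $\varphi_n\to\bar\varphi$ a.e. --- together with $\varphi_n\overset{\ast}{\rightharpoonup}\bar\varphi$ in $L^\infty(0,T;H^2)$, $(\varphi_n)_t\rightharpoonup(\bar\varphi)_t$ in $L^2(0,T;V)$, $\mu_n\overset{\ast}{\rightharpoonup}\bar\mu$ in $L^\infty(0,T;V)$, and $\u_n\rightharpoonup\bar\u$ in $L^2(0,T;\H^2)$.

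I would then pass to the limit in the weak formulation of \eqref{cont1}--\eqref{cont2} written with control $\U_n$: the linear terms pass by weak convergence; $\F'(\varphi_n)\to\F'(\bar\varphi)$ and $a\varphi_n-\J*\varphi_n\to a\bar\varphi-\J*\bar\varphi$ strongly in $C([0,T];H)$, so $\bar\mu=a\bar\varphi-\J*\bar\varphi+\F'(\bar\varphi)$; and the quadratic couplings $\u_n\cdot\nabla\varphi_n=\mathrm{div}(\u_n\varphi_n)$ and $\mu_n\nabla\varphi_n$ converge in the sense of distributions, each being a product of a weakly convergent factor and a strongly convergent one. Thus $(\bar\varphi,\bar\u)$ is a weak solution of \eqref{cont1}--\eqref{cont2} with control $\bar\U$, and by Theorem~\ref{weakunique} together with the regularity supplied by Theorem~\ref{strongsol} it coincides with the unique strong solution, i.e. $(\bar\varphi,\bar\u)=S(\bar\U)$. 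Finally, lower semicontinuity closes the argument: strong convergence of $\varphi_n$ in $C([0,T];H)$ yields $\int_0^T\|\varphi_n-\varphi_d\|^2\,\d t\to\int_0^T\|\bar\varphi-\varphi_d\|^2\,\d t$ and $\|\varphi_n(T)-\varphi_\Omega\|_{L^2(\Omega)}^2\to\|\bar\varphi(T)-\varphi_\Omega\|_{L^2(\Omega)}^2$, while weak lower semicontinuity of the $L^2$-norm gives $\int_0^T\|\bar\u-\u_d\|^2\,\d t\le\liminf_n\int_0^T\|\u_n-\u_d\|^2\,\d t$ and $\int_0^T\|\bar\U\|^2\,\d t\le\liminf_n\int_0^T\|\U_n\|^2\,\d t$; adding these, $\mathcal{J}(\bar\varphi,\bar\u,\bar\U)\le\liminf_n\mathcal{J}(\varphi_n,\u_n,\U_n)=m$, so $\bar\U\in\mathcal{U}_{ad}$ is an optimal control.

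The step I expect to be delicate is the uniform estimate in the second paragraph: one has to be sure that the bounds in the proof of Theorem~\ref{strongsol} depend on the forcing only through its $\mathcal{U}$-norm, so that boundedness of $\mathcal{U}_{ad}$ in $\mathcal{U}$ (not merely in $\mathbb{L}^\infty(\Omega\times(0,T))$) really transfers uniform $H^2$-in-space control to $\{\varphi_n\}$; this is what powers the Aubin--Lions compactness needed to handle the couplings $\u_n\cdot\nabla\varphi_n$ and $\mu_n\nabla\varphi_n$. A secondary point is that $\mathcal{U}$ is not reflexive, so the limiting control must be extracted using the weak-$\ast$ topology of $L^\infty(0,T;\G_{\mathrm{div}})$, and one must check that the pointwise box constraint survives this weaker mode of convergence; both are standard once set up, and the remaining limit passages and the lower semicontinuity of $\mathcal{J}$ are routine.
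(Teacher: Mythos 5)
Your proposal is correct and follows essentially the same route as the paper: minimizing sequence, weak(-$\ast$) compactness of the controls with $\mathcal{U}_{ad}$ convex and closed, uniform a priori bounds on the states, Aubin--Lions compactness to pass to the limit in the nonlinear couplings, identification of the limit with $S(\bar{\U})$, and weak lower semicontinuity of $\mathcal{J}$. The only notable difference is that you invoke the uniform strong-solution bounds (giving compactness in $C([0,T];V)$), whereas the paper gets by with the weak-solution energy estimates and compactness in $C([0,T];H)$ alone, which already suffice for the limit passage and for the terminal term $\|\varphi_n(T)-\varphi_\Omega\|^2$.
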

\begin{proof}
Let us define $l= \inf_{\U \in \mathcal{U}_{ad}} \mathcal{J}(\varphi, \u, \U)$. Since $0 \leq l < \infty$, there exists a minimizing sequence $\{U_n\} \in \mathcal{U}_{ad}$ for  \eqref{cost} such that 
\begin{align*}
\lim_{n\rightarrow \infty} \mathcal{J}(\varphi_n, \u_n, \U_n) = l.
\end{align*}
 where $[\varphi_n, \u_n]=S(\U_n)$ is a corresponding state solution of the system \eqref{cont1}-\eqref{cont2}. Since $\mathcal{U}_{ad}$ is bounded in $\mathcal{U}$, using the embedding $\mathcal{U} \subset L^\infty(0,T; \G_{\mathrm{div}}) \subset L^2(0,T; \G_{\mathrm{div}})$ we get  
\begin{align*}
\{ \U_n \} \ \ \mathrm{ is \ uniformly \ bounded \ in \ } L^2(0,T;\G_{\mathrm{div}}).
\end{align*}
Since $\mathcal{U}$ is closed there exists $\U^* \in \mathcal{U}$ such that 
\begin{align*}
\U_n \xrightarrow{w} \U^* \ \mathrm{in} \  L^2(0,T;\G_{\mathrm{div}})
\end{align*} and using the estimates in Theorem 2.2 in \cite{CHB weak}  we get 
\begin{align*}
&\{ \varphi_n \} \ \mathrm{ is \ uniformly \ bounded \ in \ } L^\infty(0,T;H)\cap L^2(0,T;V), \\
&\{ \varphi_n' \} \ \mathrm{is \ uniformly \ bounded \ in} \ L^2(0,T;V'), \\
&\{ \mu_n \} \ \mathrm{is \ uniformly \ bounded \ in} \ L^2(0,T;V) ,\\
&\{ \u_n \} \ \mathrm{is \ uniformly \ bounded \ in} \ L^2(0,T;\V_{\mathrm{div}}).
\end{align*}
We can find sub-sequences (still denoted by same subscript) and $ \varphi^* \in L^\infty(0,T;H)\cap L^2(0,T;V), \u^* \in L^2(0,T;V)$ such that 
\begin{align*}
&\varphi_n \xrightarrow{w^*} \varphi^* \ \mathrm{in} \  L^\infty(0,T;H), \\
&\varphi_n \xrightarrow{w} \varphi^* \ \mathrm{in} \  L^2(0,T;V) ,\\
&(\varphi_n)_t \xrightarrow{w} \varphi^*_t \ \mathrm{in} \  L^2(0,T;V'),\\
&\u_n\xrightarrow{w} \u^* \ \mathrm{in} \  L^2(0,T;\V_{\mathrm{div}}). 
\end{align*}
By Aubin-Lions Compactness lemma we get 
\begin{align*}
\varphi_n \xrightarrow{s} \varphi^* \ \mathrm{in} \ C([0,T];H)
\end{align*}
which gives 
\begin{align*}
\mu_n = a \varphi_n -\J*\varphi_n + \F'(\varphi_n) \xrightarrow{s} a \varphi^* -\J*\varphi^* + \F'(\varphi^*) = \mu. 
\end{align*}
Using these convergences, we pass to the limit in the weak formulation of \eqref{cont1}-\eqref{cont2} like in \cite{CHB weak} written for each $n \in \mathbb{N}$ then we can see that $[ \varphi^*, \u^*] = S(\U^*)$. Since $\mathcal{J}$ is convex and continuous functional, it follows that $\mathcal{J}$ is weakly lower semi continuous. Hence we have 
\begin{align*}
\mathcal{J}(\varphi^*, \u^*, \U^*) \leq \liminf \mathcal{J}(\varphi_n, \u_n, \U_n)
\end{align*}
which implies 
\begin{align*}
l \leq \mathcal{J}(\varphi^*, \u^*, \U^*) \leq \liminf \mathcal{J}(\varphi_n, \u_n, \U_n) = \lim \mathcal{J}(\varphi_n, \u_n, \U_n)=l.
\end{align*}
Hence we conclude that $[\varphi^*, \u^*]$ is the optimal state with the optimal control $\U^*$.
\end{proof}	
	\subsection{Linearised system}
	Let $\U^*$ be an optimal control and $[ \varphi^*,\u^*]$ be corresponding strong solution of the system \eqref{cont1}-\eqref{cont2} in the sense of Theorem \ref{strongsol}. Let $\U \in \mathcal{U}$ be given. Consider the following system which is obtained by linearising the system \eqref{cont1}-\eqref{cont2} around the optimal state $[ \varphi^*,\u^*]$
\begin{align} 
	\psi_t + \w \cdot \nabla \varphi^* + \u^* \nabla \psi & = \Delta \tilde{\mu}, \quad \text{in} \quad \Omega \times (0,T) \label{2},\\
	\tilde{\mu} &= a \psi - \J * \psi +\F''(\varphi^*)\psi ,  \quad \text{in} \quad \Omega \times (0,T),\\
	-\nu \Delta \w +\w + \nabla \pi_{\w} &= \tilde{\mu} \nabla \varphi^* + \mu^* \nabla \psi + \U,\quad \text{in} \quad \Omega \times (0,T) ,\label{1}\\
	\text{div} \, (\w) & = 0,  \quad \text{in} \quad \Omega \times (0,T),\\
	\w = \frac{\partial \tilde{\mu}}{\partial \n}&  = 0 , \quad \text{on} \quad \partial\Omega \times (0,T),\\
	\psi (0)  & = \psi_0 (x), \quad \text{in} \quad \Omega, \label{lin1}
\end{align}	
where $\mu^*= a\varphi^*-\J*\varphi^* + \F'(\varphi^*)$.	
\begin{theorem} \label{thmlin}
Suppose that (\textbf{H1})-(\textbf{H5}) are satisfied. Then for every $\U \in \mathcal{U} $ there exists a unique weak solution for the problem \eqref{2}-\eqref{lin1} such that 
\begin{align*}
\psi \in C([0,T]; H) \cap L^2(0,T; V)\cap H^1 (0,T; V')), \ 
\w \in L^2(0,T; \V_{\mathrm{div}}).
\end{align*}
\end{theorem}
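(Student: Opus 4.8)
The plan is to construct the solution by a Faedo--Galerkin approximation, derive uniform a priori bounds that exploit the regularity of the reference strong solution $[\varphi^*,\u^*]$, pass to the limit, and then read off uniqueness from linearity. Let $\{w_j\}_{j\ge 1}$ be the eigenfunctions of $\mathcal B$ (an orthonormal basis of $H$, orthogonal in $V$) and $\{\y_j\}_{j\ge 1}$ the eigenfunctions of the Stokes operator on $\V_{\mathrm{div}}$ (orthonormal in $\G_{\mathrm{div}}$, orthogonal in $\V_{\mathrm{div}}$). Writing $\Pi_n$ for the $H$--orthogonal projection onto $\mathrm{span}\{w_1,\dots,w_n\}$, I look for $\psi_n(t)=\sum_{j=1}^n a^n_j(t)w_j$ and $\w_n(t)=\sum_{j=1}^n b^n_j(t)\y_j$ solving the projections of \eqref{2} and \eqref{1}, with $\tilde\mu_n:=a\psi_n-\J*\psi_n+\F''(\varphi^*)\psi_n$, $\psi_n(0)=\Pi_n\psi_0$, and $(\Delta\tilde\mu_n,w_k)$ read as $-(\nabla\tilde\mu_n,\nabla w_k)$; this is legitimate because $\nabla\tilde\mu_n=(a+\F''(\varphi^*))\nabla\psi_n+\psi_n\nabla a-\nabla\J*\psi_n+\F'''(\varphi^*)\psi_n\nabla\varphi^*\in\mathbb{L}^2$. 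Since \eqref{1} carries no time derivative, its projection is, for each $t$, an invertible linear algebraic system (matrix $\nu A_n+I$, with $A_n$ the diagonal of the relevant Stokes eigenvalues) expressing $b^n_j(t)$ linearly in $(a^n_j(t))$ with coefficients depending measurably and locally boundedly on $t$ through $\varphi^*,\mu^*,\U$; substituting this into the projection of \eqref{2} produces a linear Carath\'eodory ODE system for $(a^n_j)$, hence a unique absolutely continuous solution on $[0,T]$.

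\emph{A priori estimates.} Testing the projected \eqref{1} with $\w_n$ (the pressure drops out), using $\|\tilde\mu_n\|\le C\|\psi_n\|$ (from $a,\F''(\varphi^*)\in L^\infty$, $\J\in L^1$), $\mu^*\in L^\infty(\Omega\times(0,T))$, $\nabla\varphi^*\in L^\infty(0,T;\mathbb{L}^4)$, the Ladyzhenskaya inequality \eqref{poin} and Young's inequality, gives the elliptic bound
\[
\|\w_n(t)\|^2_{\V_{\mathrm{div}}}\le C\bigl(\|\mu^*\|^2_{L^\infty}\|\nabla\psi_n(t)\|^2+\|\nabla\varphi^*(t)\|^2_{\mathbb{L}^4}\|\psi_n(t)\|^2+\|\U(t)\|^2_{\V'_{\mathrm{div}}}\bigr)\quad\text{a.e. }t.
\]
Testing the projected \eqref{2} with $\psi_n$, the transport term $(\u^*\cdot\nabla\psi_n,\psi_n)$ vanishes because $\mathrm{div}\,\u^*=0$ and $\u^*|_{\partial\Omega}=0$; the term $(\nabla\tilde\mu_n,\nabla\psi_n)$ is treated exactly as in \eqref{135}--\eqref{92} (via \eqref{H3} and $\nabla\varphi^*\in L^\infty(0,T;\mathbb{L}^4)$), yielding a dissipative contribution $\tfrac{C_0}{2}\|\nabla\psi_n\|^2$ minus $C(1+\|\nabla\varphi^*\|^4_{\mathbb{L}^4})\|\psi_n\|^2$; the coupling term is controlled by $|(\w_n\cdot\nabla\varphi^*,\psi_n)|\le\delta\|\nabla\w_n\|^2+C_\delta\|\nabla\varphi^*\|^2_{\mathbb{L}^4}\|\psi_n\|^2$, after which the elliptic bound is inserted and $\delta$ is chosen small (depending on $\|\mu^*\|_{L^\infty}$) so that the $\|\nabla\w_n\|^2$ term is absorbed into $\tfrac{C_0}{2}\|\nabla\psi_n\|^2$. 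What remains is
\[
\frac{d}{dt}\|\psi_n\|^2+\frac{C_0}{2}\|\nabla\psi_n\|^2\le\alpha(t)\,\|\psi_n\|^2+C\,\|\U(t)\|^2_{\V'_{\mathrm{div}}},
\]
with $\alpha(t)=C(1+\|\nabla\varphi^*(t)\|^4_{\mathbb{L}^4})\in L^1(0,T)$ by Theorem \ref{strongsol} and \eqref{39}. Gronwall's lemma bounds $\psi_n$ in $L^\infty(0,T;H)\cap L^2(0,T;V)$ uniformly in $n$; the elliptic bound then bounds $\w_n$ in $L^2(0,T;\V_{\mathrm{div}})$; and comparing the terms of the projected \eqref{2} (using $\u^*\in L^\infty(0,T;\V_{\mathrm{div}})$, $\nabla\varphi^*\in L^\infty(0,T;\mathbb{L}^4)$ and the bounds just obtained) bounds $(\psi_n)_t$ in $L^2(0,T;V')$.

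\emph{Passage to the limit.} By Banach--Alaoglu and the Aubin--Lions--Simon lemma, along a subsequence $\psi_n\rightharpoonup\psi$ weakly-$*$ in $L^\infty(0,T;H)$, weakly in $L^2(0,T;V)$, with $(\psi_n)_t\rightharpoonup\psi_t$ weakly in $L^2(0,T;V')$ and $\psi_n\to\psi$ strongly in $L^2(0,T;H)$ and in $C([0,T];H)$, while $\w_n\rightharpoonup\w$ weakly in $L^2(0,T;\V_{\mathrm{div}})$. The strong convergence of $\psi_n$ lets me pass to the limit in $\tilde\mu_n$ and in every coefficient depending on $\varphi^*$, so $[\psi,\w]$ is a weak solution of \eqref{2}--\eqref{lin1}; moreover $\tilde\mu\in L^2(0,T;V)$, and $\psi\in C([0,T];H)\cap H^1(0,T;V')$ follows from $\psi\in L^2(0,T;V)$, $\psi_t\in L^2(0,T;V')$ by the standard interpolation/Lions--Magenes argument.

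\emph{Uniqueness.} The difference of two solutions with the same data solves \eqref{2}--\eqref{lin1} with $\psi_0=0$ and $\U=0$; since it has the regularity of a weak solution, the a priori estimate of the second step applies directly to it and forces $\psi\equiv 0$, hence $\tilde\mu\equiv 0$ and then $\w\equiv 0$ from \eqref{1}. The main obstacle in the argument is the bookkeeping in the a priori estimate: choosing the Young's-inequality weights so that \emph{both} $\|\nabla\psi_n\|^2$ and $\|\nabla\w_n\|^2$ get absorbed into the dissipation. Here the static (elliptic) nature of \eqref{1} is essential, since it permits estimating $\w_n$ in $\V_{\mathrm{div}}$ in terms of $\psi_n$ \emph{before} closing the $\psi_n$-estimate; the other point requiring care is verifying $\alpha\in L^1(0,T)$ and that the absorption constants are finite, which both rest on the strong-solution regularity $\varphi^*,\mu^*\in L^\infty(\Omega\times(0,T))$, $\nabla\varphi^*\in L^\infty(0,T;\mathbb{L}^4)$ and $\u^*\in L^\infty(0,T;\V_{\mathrm{div}})$ established earlier.
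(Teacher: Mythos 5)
Your proof is correct and follows essentially the same route as the paper's: a Faedo--Galerkin scheme on the same eigenbases, an energy estimate closed by Gronwall using the strong-solution regularity of $(\varphi^*,\u^*)$, weak compactness plus Aubin--Lions to pass to the limit, and uniqueness by linearity. The only differences are organizational: you eliminate $\w_n$ through the static Brinkman equation before closing the $\psi_n$-estimate, use the vanishing of $(\u^*\cdot\nabla\psi_n,\psi_n)$, and bound $(\mu^*\nabla\psi_n,\w_n)$ via $\mu^*\in L^\infty$, whereas the paper adds the two tested equations, estimates the transport term with $\|\u^*\|_{\H^2}$, and integrates by parts to use $\nabla\mu^*\in L^\infty(0,T;\mathbb{L}^4)$ --- all of which lead to the same uniform bounds.
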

\begin{proof}
We prove the existence of solution for linearized system using Faedo-Galerkin approximation scheme using the method in \cite{CHB weak}. We consider the families of functions $(\eta_k)$ and $(\v_k)$, eigenfunctions of the operator $-\Delta+ I : D(\mathcal{B}) \rightarrow H $ and of the Stokes operator respectively. Now, define a finite dimensional subspaces $\Psi_n:=\langle \eta_1, \cdots ,\eta_n \rangle$ and $\mathcal{V}_n := \langle \v_1, \cdots \v_n \rangle$ spanned by first $n$ functions of respective spaces, and orthogonal projectors on this spaces,  $\tilde{P}_n := P_{\Psi_n}$ and $P_n:=P_{\mathcal{V}_n}$. Then we look for the functions 
\begin{align*}
\psi_n (t) := \sum_{i=1}^{n} a_i^{(n)} (t) \eta_i , \qquad \w_n(t) := \sum_{i=1}^n b_i^{(n)}(t) \v_i.
\end{align*}
as a solution of the following approximation 
\begin{align}
&\langle (\psi_n)_t(t), \eta_i \rangle + (\w_n(t) \cdot \nabla \varphi^*(t), \eta_i ) + (\u^*(t) \cdot \nabla \psi_n(t), \eta_i)= - (\nabla \tilde{\mu}_n(t), \nabla \eta_i), \label{63}\\
&\nu (\nabla \w_n(t), \nabla \v_i) + (\w_n(t), \v_i)= (\tilde{\mu}_n(t) \nabla \varphi^*(t), \v_i) + (\mu^*(t) \nabla \psi_n(t), \v_i) + (\U(t), \v_i), \label{62}\\
& \hspace{5cm} \psi_n(0) = 0. \label{153} 
\end{align}
for $i=1, \cdots n$. This is nothing but a Cauchy problem for a system of 2$n$ ordinary differential equations in the $n$ unknowns $a_i^{(n)}$ and $b_i^{(n)}$ .  Using the Cauchy-Lipschitz theorem there exists a unique solution $(\psi_n, \w_n)$ to the approximated system. Now, multiply \eqref{63} and \eqref{62} by $a_i^{(n)}$ and $b_i^{(n)}$ respectively, and sum over $i=1, \cdots ,n$. We get 
\begin{align}
\nu \|\nabla \w_n\|^2 + \|\w_n\|^2+ \frac{\d}{\d t}\|\psi_n\| + (\w_n \cdot \nabla \varphi^*, \psi_n ) + (\u^* \cdot \nabla \psi_n, \psi_n)+(\nabla \tilde{\mu}_n, \nabla \psi_n) \no \\
= (\tilde{\mu}_n \nabla \varphi^*, \w_n) + (\mu^* \nabla \psi_n, \w_n) + (\U(t), \w_n) \label{69}
\end{align}
Now we have the following estimates,
\begin{align}
|(\w_n \cdot \nabla \varphi^*, \psi_n )| &\leq \|\w_n\|_{L^4} \|\nabla \varphi^* \|_{L^4} \|\psi\| \no \\
&\leq \|\nabla \w_n\| \|\nabla \varphi^*\|_{L^4} \|\psi\| \no \\
&\leq \frac{\nu}{4} \|\nabla \w_n\|^2 + \frac{1}{\nu} \|\nabla \varphi^* \|^2_{L^4} \|\psi\|^2, \label{64}
\end{align}
\begin{align}
(\u^* \cdot \nabla \psi_n, \psi_n) &\leq \|\u^*\|_{L^\infty} \| \nabla \psi_n\|\|\psi_n\| \no \\
&\leq \frac{C_0}{8} \|\nabla \psi_n\|^2 + C \|\u^*\|^2_{H^2} \|\psi_n\|^2 \label{70}
\end{align}
Using (\textbf{H3}), we get
\begin{align}
(\nabla \tilde{\mu}_n, \nabla \psi_n) &= (\nabla (a \psi_n-\J*\psi_n + \F''(\varphi^*)\psi_n), \nabla \psi_n) \no \\
 &= (a\nabla \psi_n + \psi_n \nabla a -\nabla \J*\psi_n + \F''(\varphi^*) \nabla \psi_n+ \F'''(\varphi^*) \nabla \varphi^* \psi_n, \nabla \psi_n)\no \\
&\geq C_0 \|\nabla \psi_n\|^2 + (\psi_n \nabla a , \nabla \psi_n)-(\nabla \J*\psi_n, \nabla \psi_n ) + (\F'''(\varphi^*)\nabla \varphi^* \psi_n, \nabla \psi_n), \label{149}
\end{align}
To estimate the right hand side terms of \eqref{149}
\begin{align}
(\psi_n \nabla a , \nabla \psi_n) &\leq \|\psi_n\|\|\nabla a\|_{\infty}\|\nabla \psi_n\| \no \\
&\leq \frac{C_0}{4} \|\nabla \psi_n\|^2 + \frac{1}{C_0}\|\psi_n\|^2\|\nabla a\|^2_{\infty}, \label{66}
\end{align}
\begin{align}
(\nabla \J*\psi_n, \nabla \psi_n ) &\leq \|\nabla \J\|_{L^1} \|\psi_n\| \|\nabla \psi_n\| \no \\
&\leq \frac{C_0}{4} \|\nabla \psi\|^2 + \frac{1}{C_0} \|\nabla \J\|^2_{L^1} \|\psi_n\|^2, \label{67}
\end{align}
using Gagliardo-Nirenberg inequality
\begin{align}
(\F'''(\varphi^*)\nabla \varphi^* \psi_n, \nabla \psi_n) &\leq C \|\nabla \varphi^*\|_{L^4} \|\psi_n\|
_{L^4} \| \nabla \psi_n\| \no \\
& \leq C \|\nabla \varphi^*\|_{L^4} (\|\psi_n\|+\|\psi_n\|^{1/2} \|\nabla \psi_n\|^{1/2} ) \| \nabla \psi_n\| \no \\
&\leq \frac{C_0}{4} \| \nabla \psi_n\|^2 + C \|\nabla \varphi^*\|^2_{L^4} \|\psi_n\|^2 + C \|\nabla \varphi^*\|^4_{L^4} \|\psi_n\|^2. \label{150}
\end{align}
Substituting \eqref{66}-\eqref{150} in \eqref{149} we get 
\begin{align}
(\nabla \tilde{\mu}_n, \nabla \psi_n) 
&\geq \frac{C_0}{4}\|\nabla \psi_n\|^2 - C \|\psi_n\|^2. \label{68}
\end{align}
Now to estimate right hand side terms of \eqref{69}, using \eqref{46}, we get
\begin{align}
|(\tilde{\mu}_n \nabla \varphi^*, \w_n)| &= \| \tilde{\mu}_n\|\| \nabla \varphi^*\|_{L^4} \|\w_n\|_{L^4} \no \\
& \leq C \|\varphi\| \|\nabla \varphi^*\|_{L^4} \|\nabla \w_n\| \no \\
& \leq \frac{\nu}{4} \|\nabla \w_n\|^2 + C \|\varphi\|^2 \|\nabla \varphi^*\|^2_{L^4}. \label{151}
\end{align}
\begin{align}
|(\mu^* \nabla \psi_n, \w_n)| &= |(\psi_n \nabla \mu^*  , \w_n) | \no \\
&\leq \| \nabla \mu^*\|_{L^4} \|\psi_n\| \|\w_n\|_{L^4} \no \\
&\leq \| \nabla \mu^*\|_{L^4} \|\psi_n\| \|\nabla \w_n\| \no \\
&\leq \frac{\nu}{4} \|\nabla \w_n\|^2 + \frac{1}{\nu} \| \nabla \mu^*\|^2_{L^4} \|\psi_n\|^2, \label{65}
\end{align}
\begin{align}
|(\U(t), \w_n)| \leq \|\U\| \| \w_n\| \leq \frac{1}{2} \|\w_n\|^2 + \frac{1}{2} \|\U\|^2 \label{152}
\end{align}
Substituting \eqref{64}, \eqref{70} and \eqref{68}-\eqref{152}  in \eqref{69} we arrive at
\begin{align} \label{71}
\frac{\d}{\d t}\|\psi_n\|^2 + \frac{\nu}{4} \|\nabla \w_n\|^2 + \frac{1}{2}\|\w_n\|^2+& \frac{C_0}{8} \|\nabla \psi_n\|^2 \no \\
\leq &C(1 + \|\u^*\|^2_{\H^2}) \|\psi_n\|^2 + \frac{1}{2} \|\U\|^2
\end{align}
By employing the Gronwall's lemma we conclude that
\begin{align*}
\|\psi_n(t)\|^2 \leq \exp\left( C \int_0^t (1 + \|\u^*(s)\|^2_{\H^2})d s\right) \left(\int_0^t \|\U(s)\|^2 ds \right)
\end{align*}
for all $t \in [0,T]$. Since $\u^* \in L^2(0,T; \H^2)$ we have 
\begin{align} \label{lin4}
\|\psi_n\|_{L^\infty(0,T; H)} \leq C \|\U\|_{\mathcal{U}} 
\end{align}
and integrating \eqref{71} from $0$ to $t$, we get 
\begin{align}\label{lin5}
\|\psi_n\|_{L^2(0,T;V)} \leq C \|\U\|_{\mathcal{U}} , \\
\| \w_n\|_{L^2(0,T; \V_{\mathrm{div}})} \leq C \|\U\|_{\mathcal{U}}, \label{lin6}
\end{align}
 moreover, from \eqref{63} we get 
that 
\begin{align*}
\|(\psi_n)_t\|_{V'} \leq C( \|\nabla \w_n \| \|\nabla \varphi^*\| + \|\nabla \u^*\| \|\nabla \psi_n\| + C \|\psi_n\|_V + \|\nabla \varphi^*\|_{L^4}\|\psi_n\|_V),
\end{align*}
that is 
\begin{align}
\|(\psi_n)_t\|_{L^2(0,T;V')}  \leq C \|\U\|_{\mathcal{U}}, \label{lin7}
\end{align}
for every $n \in \mathbb{N}$. From above uniform bounds we can obtain sub-sequences of $\{\psi_n\},\{(\psi_n)_t\}$ and $\{\w_n\}$, again denoted by $\{\psi_n\},\{(\psi_n)_t\}$ and $\{\w_n\}$ and functions $\psi \in L^\infty(0,T; H) \cap L^2(0,T;V) $, $\psi_t \in L^2(0,T;V') $ and $\w \in L^2(0,T; \V_{\mathrm{div}})$ such that
\begin{align*}
&\psi_n \xrightharpoonup{w^*} \psi \quad \text{in} \quad L^\infty (0,T; H),  \\
&\psi_n \xrightharpoonup{w} \psi \quad \text{in} \quad L^2 (0,T; V), \\
&(\psi_n)_t \xrightharpoonup{w} \psi_t \quad \text{in} \quad L^2 (0,T; V'), \\
&\w_n \xrightharpoonup{w} \w \quad \text{in} \quad L^2(0,T; \V_{\mathrm{div}}). 
\end{align*}
By passing to the limit in \eqref{63}-\eqref{153} we can say that there exists a weak solution $(\w, \psi) \in L^2(0,T; \V_{\mathrm{div}}) \times L^\infty (0,T; H) \cap L^2 (0,T; V)\cap H^1 (0,T; V')$. By Aubin's compactness lemma we have that $\psi_n \rightarrow \psi$ in $L^2(0,T;H)$ and $\psi \in C([0,T];H)$. This gives,
\begin{align*}
(\psi,\w) \in (C([0,T];H) \cap L^2 (0,T; V)\cap H^1 (0,T; V')) \times L^2(0,T; \V_{\mathrm{div}}).
\end{align*}
To prove that the solution $[\psi, \w]$ of  \eqref{2}-\eqref{lin1} is unique, let $[\psi_1,\w_1]$ and $[\psi_2, \w_2]$ be any two solutions of \eqref{2}-\eqref{lin1}. Let $\psi=\psi_1-\psi_2$ and $\w=\w_1-\w_2$. Then $[\psi, \w]$ satisfies 
\begin{align}
\psi_t + \w \cdot \nabla \varphi^* + \u^* \nabla \psi & = \Delta \tilde{\mu}, \label{lin2}\\
\tilde{\mu} &= a \psi - \J * \psi +\F''(\varphi^*)\psi,\\
	-\nu \Delta \w +\w + \nabla \tilde{\pi} &= \tilde{\mu} \nabla \varphi^* + \mu^* \nabla \psi  \label{lin3},\\
	\text{div} \, (\w) & = 0,  \\
	\w|_{\partial \Omega} = \frac{\partial \tilde{\mu}}{\partial \n} |_{\partial \Omega}&  = 0, \\
	\psi (0)  & = \psi_0 (x). 
\end{align}
We take inner product of \eqref{lin2} and \eqref{lin3} with $\psi$ and $\w$ respectively. Making use of the estimates derived for proving \eqref{69} we can prove that the weak solution of the system \eqref{2}-\eqref{lin1} is unique. From the estimates \eqref{lin4}-\eqref{lin7} we can also conclude that the mapping $\U \mapsto [\psi,\w]$ is a continuous linear mapping from $\mathcal{U}$ to $ L^\infty(0,T;H) \cap L^2(0,T;V) \cap H^1(0,T; V')\times L^2(0,T; \V_{\mathrm{div}})$.
\end{proof}

\subsection{Optimal condition} \label{control state}
In this section, we prove the differentiability of the control-to-state operator. For,  we also need  following assumption on $\F$  namely, \\
(\textbf{H7}) $\F \in C^4(\mathbb{R})$. 
\begin{theorem} \label{frech}
Suppose that the hypothesis (\textbf{H1})-(\textbf{H5}) and (\textbf{H7}) are satisfied. Then the control-to-state operator $ S: \mathcal{U} \rightarrow \mathcal{V}$ is Fr\'echet differentiable. 
Moreover, for any $\tilde{\U } \in \mathcal{U}$, the Fr\'echet derivative $S'$ at $\tilde{\U}$ in the direction of $\U$ is given by 
\begin{align*}
S'(\tilde{\U})(\U) = (\tilde{\psi},\tilde{\w}),
\end{align*}
for every $\U \in \mathcal{U}$, where $[\tilde{\w}, \tilde{\psi}]$ is the unique weak solution of the linearised system with control $\U$, which is linearised around strong solution of the controlled system \eqref{cont1}-\eqref{cont2} with control $\tilde{\U}$.

\end{theorem}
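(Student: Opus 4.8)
The plan is to establish Fréchet differentiability of $S$ at a given $\tilde{\U} \in \mathcal{U}$ by showing that the remainder
\[
r(\U) := S(\tilde{\U}+\U) - S(\tilde{\U}) - (\tilde{\psi},\tilde{\w})
\]
is of order $o(\|\U\|_{\mathcal{U}})$ in $\mathcal{V}$, where $[\tilde{\psi},\tilde{\w}]$ is the unique weak solution (Theorem \ref{thmlin}) of the linearised system around $[\varphi^*,\u^*] := S(\tilde{\U})$ with control $\U$. First I would fix notation: write $[\varphi^{\U},\u^{\U}] := S(\tilde{\U}+\U)$, which is a strong solution by Theorem \ref{strongsol}, and set $\varphi := \varphi^{\U} - \varphi^* - \tilde{\psi}$, $\u := \u^{\U} - \u^* - \tilde{\w}$, $\mu := \mu^{\U} - \mu^* - \tilde{\mu}$. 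Subtracting the three sets of equations (the controlled system at $\tilde{\U}+\U$, the controlled system at $\tilde{\U}$, and the linearised system), I obtain a system for $[\varphi,\u]$ driven by quadratic-type remainder terms: in the $\varphi$-equation the troublesome terms are $(\u^{\U}-\u^*)\cdot\nabla(\varphi^{\U}-\varphi^*)$, in the chemical potential the Taylor remainder $\F'(\varphi^{\U}) - \F'(\varphi^*) - \F''(\varphi^*)(\varphi^{\U}-\varphi^*)$, and in the Brinkman equation the bilinear term $(\mu^{\U}-\mu^*)\nabla(\varphi^{\U}-\varphi^*)$ together with the mixed terms coming from $\mu^{\U}\nabla\varphi^{\U} - \mu^*\nabla\varphi^* - \tilde{\mu}\nabla\varphi^* - \mu^*\nabla\psi$ regrouped around the linearisation point.

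Next I would run the energy estimate: test the $\varphi$-equation with $\varphi$ (or rather carry the $\Delta\mu$ term over and use $(\nabla\mu,\nabla\varphi)$ with hypothesis (\textbf{H3}) exactly as in \eqref{135}–\eqref{92}) and test the Brinkman equation with $\u$, add. The linear part produces $\tfrac12\tfrac{d}{dt}\|\varphi\|^2 + \tfrac{C_0}{2}\|\nabla\varphi\|^2 + \nu\|\nabla\u\|^2 + \|\u\|^2$ on the left, modulo absorbable terms, precisely as in the proof of Theorem \ref{diff1}. For the quadratic remainders I would use: the difference estimate of Theorem \ref{diff1}, namely $\|\varphi^{\U}-\varphi^*\|^2_{L^\infty([0,t];H)} + \|\nabla(\varphi^{\U}-\varphi^*)\|^2_{L^2} + \|\u^{\U}-\u^*\|^2_{L^2(\V_{\mathrm{div}})} \le C\|\U\|^2_{\mathcal{U}}$, and the higher-order difference estimate \eqref{bound2} giving control of $(\varphi^{\U}-\varphi^*)_t$ in $L^2(H)$ and $\varphi^{\U}-\varphi^*$ in $L^2(H^2)$, hence (via Agmon / the embedding $H^2 \hookrightarrow L^\infty$ in 2D, \eqref{poin}, and the strong-solution regularity \eqref{43}–\eqref{42}) in $L^2(W^{1,4})$ and $L^\infty(V)$. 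The Taylor remainder for $\F'$ is bounded pointwise by $C|\varphi^{\U}-\varphi^*|^2$ using $\F\in C^4$ (in fact $C^3$ suffices here, but (\textbf{H7}) is invoked so the same argument covers the second differentiability hidden in the adjoint analysis later), and the $L^4$–$L^4$–$L^2$ Hölder splitting plus Ladyzhenskaya (Lemma \ref{lady}) turns each bilinear remainder into something of the form $C\,\omega(t)\,\|\U\|^2_{\mathcal{U}}$ with $\omega\in L^1(0,T)$, plus a small multiple of $\|\nabla\varphi\|^2$ or $\|\nabla\u\|^2$ to be absorbed on the left. Then Gronwall (as in \eqref{100}) gives
\[
\|\varphi\|^2_{C([0,t];H)} + \|\nabla\varphi\|^2_{L^2(0,t;H)} + \|\u\|^2_{L^2(0,t;\V_{\mathrm{div}})} \le C\|\U\|^4_{\mathcal{U}},
\]
so $\|r(\U)\|_{\mathcal{V}} = o(\|\U\|_{\mathcal{U}})$, which is Fréchet differentiability with $S'(\tilde{\U})\U = (\tilde{\psi},\tilde{\w})$. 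Linearity and boundedness of $\U \mapsto (\tilde{\psi},\tilde{\w})$ is exactly the last assertion of Theorem \ref{thmlin}, so $S'(\tilde{\U}) \in \mathcal{L}(\mathcal{U},\mathcal{V})$.

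The main obstacle I anticipate is controlling the bilinear term $(\mu^{\U}-\mu^*)\nabla(\varphi^{\U}-\varphi^*)$ (and its regrouped relatives) in the Brinkman remainder: naively it sits in $L^2(0,T;H)$ only if one has $\nabla(\varphi^{\U}-\varphi^*)$ in a space better than $L^2(L^2)$, which is why the higher-order difference estimate \eqref{bound2} — giving $\varphi^{\U}-\varphi^* \in L^2(0,T;H^2)$ and hence $\nabla(\varphi^{\U}-\varphi^*) \in L^2(0,T;\mathbb{L}^4)$ in 2D — is essential rather than a convenience. A secondary delicate point is that $\mu^{\U}-\mu^*$ must be estimated in $L^\infty(0,T;H)$ or $L^4$ using $\|\mu^{\U}-\mu^*\| \le C\|\varphi^{\U}-\varphi^*\|$ (as in \eqref{46}) together with the uniform $L^\infty(0,T;V)$ bound on $\varphi^*, \varphi^{\U}$ from the strong-solution estimate \eqref{137} and \eqref{39}; pairing these correctly so that every remainder term carries two full powers of a difference (hence $\|\U\|^2_{\mathcal{U}}$) is the bookkeeping heart of the argument. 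Everything else is a routine repetition of the estimates already displayed in the proofs of Theorems \ref{strongsol} and \ref{diff1}.
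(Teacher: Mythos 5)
Your proposal follows essentially the same route as the paper: the same decomposition $\rho=\xi-\psi$, $\y=\z-\w$, the same energy estimate obtained by testing the remainder system with $\rho$ and $\y$, the same Taylor expansion of $\F'$ around $\varphi^*$, and the same reliance on the difference estimates \eqref{bound1} and \eqref{bound2} to show every source term carries $\|\U\|^4$, followed by Gronwall. One minor correction: (\textbf{H7}) is in fact used inside this very proof, not only later in the adjoint analysis --- after integrating the $\Delta\mu_\rho$ term by parts, the gradient of the Taylor remainder $\tfrac12\F'''(\theta\bar{\varphi}+(1-\theta)\varphi^*)\xi^2$ produces an $\F^{(4)}$ term (estimate \eqref{146}), so $\F\in C^3$ would not suffice for the paper's computation.
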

\begin{proof}
Note that the statement of the theorem states that for the optimal control $ \U^*$ whose existence is proved in the Theorem \ref{existenceofOptimalControl},
\begin{align*}
S'(\U^*)(\U) = (\psi, \w),
\end{align*}
for every $\U \in \mathcal{U}$, where $[\psi, \w]$ is the unique weak solution of the linearised system \eqref{2}-\eqref{lin1} with control $\U$. The Fr\'echet derivative at optimal control is going to be useful for us to characterize first-order optimality condition. Hence we will prove the theorem for optimal control $\U^*$. The general statement as stated above for any other $\tilde{\U} $, can be proved analogously.

 Let $[\varphi^*,\u^*]=S(\U^*)$ be the solution to the system \eqref{cont1}-\eqref{cont2} with control $\U^*$.
Let $[ \bar{\varphi},\bar{\u}]$ be solution of the system \eqref{cont1}-\eqref{cont2} with $\U^* + \U$. Let $\xi=\bar{\varphi}-\varphi^*,\ \z = \bar{\u} -\u^*$. Then $[\xi, \z]$ satisfies,
\begin{align*}
\xi_t + \z \cdot \nabla \xi + \z \cdot \nabla \varphi^*+ \u^* \nabla \xi & = \Delta \mu_\xi, \\
\mu_\xi &= a \xi -\J * \xi + \F'(\bar{\varphi})- \F' (\varphi^*) ,\\
-\nu \Delta \z + \z + \nabla \pi_\z& = \mu_\xi \nabla \varphi^* + \mu_\xi \nabla \xi + \mu^* \nabla \xi +  \U, \\
\text{div} \, \z & =0, \\
\z|_{\partial \Omega} = 0 , \ \frac{\partial \mu_\xi}{\partial \n}|_{\partial \Omega} & =0 \\
\xi (0) & = 0.
\end{align*}
where $ \pi_z=\pi_{\bar{\u}}-\pi_{\u^*}$. Now let us define $\rho=\xi- \psi , \ \y=\z - \w$ where $[\psi, \w]$ is the solution of the linearised system \eqref{2}-\eqref{lin1} corresponding to $\mathcal{\U}$. Then $(\y, \rho)$ satisfies 
\begin{align} 
\rho_t  + \y \cdot \nabla\varphi^* +\u^* \cdot \nabla \rho + &\z \cdot \nabla \xi  =\Delta \mu_\rho, \label{13} \\
\mu_\rho &=a \rho - \J*\rho+ \F'(\bar{\varphi}) - \F'(\varphi^*)- \F''(\varphi^*) \psi, \\
-\nu \Delta \y + \y + \nabla \pi_\y& = \mu_\rho \nabla \varphi^* + (a \xi- \J*\xi + \F'(\bar{\varphi}) -\F'(\varphi^*))\nabla \xi ,\no \\
& \quad + (a\varphi^* - \J*\varphi^* + \F'(\varphi^*)) \nabla \rho ,\label{12}\\
\text{div} \, \y & =0 ,\\
\y &= 0 , \ \frac{\partial \mu_\rho}{\partial \n}  =0, \\
\rho (0) & = 0.
\end{align}
where $\pi_y= \pi_{\bar{\u}}-\pi_{\u^*}-\pi_w$ with $\pi_{\bar{\u}}$ and $\pi_{\u^*}$ are the pressure terms appearing in \eqref{cont3} for $\U^*+ \U$ and $\U^*$ respectively and $\pi_w$ is the pressure term appearing in \eqref{1}. Now our aim is to prove that 
\begin{align}
\frac{\|[\rho, \y] \|_{\mathcal{V}}}{\|\U\|_{\mathcal{U}}} \rightarrow 0 \quad \mathrm{as} \quad \|\U\|_{\mathcal{U}} \rightarrow 0
\end{align}
 For, take inner product of \eqref{12} with $\y$ and of \eqref{13} with $\rho$ to get
\begin{align} \label{72}
\nu \| \nabla \y \|^2 + \| \y \|^2 =& (\mu_\rho \nabla \varphi^*, \y)+ ((a \xi- \J*\xi + \F'(\bar{\varphi}) -\F'(\varphi^*)) \nabla \xi, \y) \no \\ 
& + ((a \varphi^* - \J*\varphi^* + \F'(\varphi^*)) \nabla \rho, \y) 
\end{align}
\begin{align} \label{73}
\frac{1}{2} \frac{\d}{\d t} \| \rho (t) \|^2 + (\y \cdot \nabla \varphi^*, \rho )+ (\u^* \cdot \nabla \rho, \rho  ) + (\z \cdot \nabla \xi, \rho) \no \\
 =(\Delta (a \rho-\J*\rho+\F'(\bar{\varphi}) - \F'(\varphi^*)-\F''(\varphi^*) \psi), \rho ) 
\end{align}
We estimate right hand side terms of \eqref{72} one by one,
\begin{align*}
(\mu_\rho \nabla \varphi^*, \y) & = ((a \rho - \J * \rho +  \F'(\bar{\varphi})-\F(\varphi^*)-\F''(\varphi^*) \psi) \nabla \varphi^* , \y ) \\
& =  (a \rho  \nabla \varphi^*, \y)- ((\J * \rho)\nabla \varphi^*, \y)- ((  \F'(\bar{\varphi})-\F(\varphi^*)-\F''(\varphi^*) \psi) \nabla \varphi^*, \y) \\
& = I_1 + I_2 + I_3.
\end{align*}
\begin{align}
|I_1|  &\leq \| a\|_{\L^\infty} \| \rho \| \| \nabla \varphi^*\|_{\L^4} \| \y\|_{\L^4} \no \\
&\leq\| a\|_{\L^\infty} \| \rho \| \| \nabla \varphi^*\|_{L^4} \| \nabla \y\| \no \\
&\leq \frac{\nu}{10} \| \nabla \y\|^2 + C \| \nabla \varphi^*\|^2_{L^4} \| \rho \|^2, \label{76}
\end{align} 
\begin{align}
|I_2| &= \|\J * \rho\| \|\nabla \varphi^*\|_{\L^4} \| \y\|_{\L^4} \no\\
& \leq \|\J \|_{\L^1} \| \rho \| \|\nabla \varphi^*\|_{L^4} \| \nabla \y \|\no \\
& \leq \frac{\nu}{10} \| \nabla \y \|^2 + C  \|\nabla \varphi^*\|^2_{L^4} \| \rho \|^2, \label{77}
\end{align}
Notice that, since $\rho= \bar{\varphi}-\varphi^*-\psi$, using Taylor series we can write
\begin{align}\label{74}
  \F'(\bar{\varphi})-\F'(\varphi^*)-\F''(\varphi^*) \psi &= \F'(\bar{\varphi})-\F(\varphi^*)-\F''(\varphi^*) (\bar{\varphi}- \varphi^*) + \F''(\varphi^*) \rho \no \\
  & = \frac{1}{2}\F'''(\theta \bar{\varphi} + (1-\theta) \varphi^*)(\bar{\varphi}-\varphi^*)^2 + \F''(\varphi^*) \rho,
\end{align}
where $\theta \in (0,1)$. Then
\begin{align}
|I_3| &= |(\frac{1}{2}\F'''(\theta \bar{\varphi} + (1-\theta) \varphi^*)\xi^2 \nabla \varphi^*, \y) + (\F''(\varphi^*) \rho \nabla \varphi^*, \y)| \no \\
& \leq C_F \|\xi^2\| \| \nabla \varphi^*\|_{L^4} \|\y\|_{L^4} + C_F \|\rho\| \|\nabla \varphi^*\|_{L^4} \|\y\|_{L^4} \no \\
& \leq C_F \|\xi\|^2_{L^4} \| \nabla \varphi^*\|_{L^4} \|\nabla\y\| + C_F \|\rho\| \|\nabla \varphi^*\|_{L^4} \|\nabla \y\| \no \\
& \leq \frac{\nu}{10} \|\nabla \y\|^2 + C \|\xi\|^4_V \| \nabla \varphi^*\|^2_{L^4} + \frac{\nu}{10} \|\nabla \y\|^2 + C \|\rho\|^2 \|\nabla \varphi^*\|^2_{L^4}. \label{78}
\end{align}
Combining \eqref{76}, \eqref{77} and \eqref{78} we get
\begin{align}
|(\mu_\rho \nabla \varphi^*, \y)| \leq \frac{2 \nu}{5} \|\nabla \y\|^2 + C \| \nabla \varphi^*\|^2_{L^4}\| \rho \|^2 + C \|\xi\|^4_V \| \nabla \varphi^*\|^2_{L^4}. \label{79}
\end{align}
Using integration by parts, we estimate the second term on the right-hand side of \eqref{72} as follows
\begin{align*}
((a \varphi^* - \J*\varphi^* + \F'(\varphi^*)) \nabla \rho, \y) = -((a \nabla \varphi^* + \varphi^*\nabla a -\nabla \J *\varphi^*  + \F''(\varphi^*) \nabla \varphi^*) \rho,\y),
\end{align*}
\begin{align*}
|(a \nabla \varphi^* \rho , \y)| &\leq \|a\|_{\infty} \|\nabla \varphi^*\|_{L^4} \|\rho\| \|\y\|_{L^4} \\
& \leq \frac{\nu}{20} \|\nabla \y\|^2 + C \|\nabla \varphi^*\|^2_{L^4} \|\rho\|^2,
\end{align*}
\begin{align*}
|(\varphi^* \rho \nabla a ,\y)| &\leq \|\varphi^*\|_{L^4}\|\rho\| \|\nabla a \|_{\infty} \|\y\|_{L^4} \\
& \leq \frac{\nu}{20} \|\nabla \y\|^2 + C \|\varphi^*\|^2_{L^4} \|\rho\|^2,
\end{align*}
\begin{align*}
|((\nabla \J*\varphi^* )\rho, \y)| &\leq \|\nabla \J\|_{L^1} \|\varphi^*\|_{L^4} \|\rho\| \|\y\|_{L^4} \\
&\leq \frac{\nu}{20} \|\nabla \y\|^2 + C \|\varphi^*\|_{V} \|\rho\|^2,
\end{align*}
\begin{align*}
|(\F''(\varphi^*) \nabla \varphi^* \rho,\y)| &\leq C_F \|\nabla \varphi^* \|_{L^4} \|\rho\| \|\y\|_{L^4} \\
&\leq \frac{\nu}{20} \|\nabla \y\|^2 + C \|\nabla \varphi^*\|^2_{L^4} \|\rho\|^2,
\end{align*}
which gives
\begin{align}
|((a \varphi^* - \J*\varphi^* + \F'(\varphi^*)) \nabla \rho, \y)| \leq \frac{\nu}{5} \|\nabla \y\|^2 + C (\|\varphi^*\|_{V}^2 + \|\nabla \varphi^*\|_{L^4}^2)\|\rho\|^2. \label{80}
\end{align}
The third term can be estimated as 
\begin{align}
|((a \xi- \J* & \xi + \F'(\bar{\varphi}) -\F'(\varphi^*)) \nabla \xi, \y)| \no \\
& =|(\nabla a \frac{\xi^2}{2}, \y)| +|((\nabla \J * \xi) \xi,\y)|+ |(\F''(\theta \bar{\varphi} + (1-\theta)\varphi^*)\xi  \nabla \xi, \y) | \no \\
& \leq \|\nabla a \|_{\infty} \|\xi^2\| \|\y\| + \|\nabla \J\|_{L^1} \|\xi\|_{L^4} \| \xi\| \|\y\|_{L^4} +C_F \|\xi\|_{L^4} \|\nabla \xi\| \|\y\|_{L^4} \no \\
& \leq \frac{\nu}{20} \|\nabla \y\|^2 + C \|\xi\|^4_{L^4}+ \frac{\nu}{20} \|\nabla \y\|^2 + C \|\xi\|^4_{V} + \frac{\nu}{10} \|\nabla \y\|^2 + C \|\xi\|^4_V \no \\
& \leq \frac{\nu}{5} \|\nabla \y\|^2 + C \|\xi\|_V^4 .\label{81}
\end{align}
Substituting \eqref{79}, \eqref{80} and \eqref{81} in \eqref{72} we get
\begin{align*}
\nu \| \nabla \y \|^2 + \| \y \|^2 & \leq \frac{2 \nu}{5} \|\nabla \y\|^2 + C \| \nabla \varphi^*\|^2_{L^4}\| \rho \|^2 + C \|\xi\|^4_{V} \| \nabla \varphi^*\|^2_{L^4} \\
&+ \frac{\nu}{5} \|\nabla \y\|^2 + C (\|\varphi^*\|_{V}^2 + \|\nabla \varphi^*\|_{L^4}^2)\|\rho\|^2+ \frac{\nu}{5} \|\nabla \y\|^2 + C \|\xi\|_V^4,
\end{align*} 
which implies 
\begin{align}
\frac{\nu}{5} \|\nabla \y\|^2 + \|\y\|^2 \leq C  (\| \nabla \varphi^*\|^2_{L^4}+\|\varphi^*\|_{V}^2 )\| \rho \|^2  + C \|\xi\|^4_{L^4} \| \nabla \varphi^*\|^2_{L^4} 
+ C \|\xi\|_V^4. \label{85}
\end{align} 
We estimate the terms in \eqref{73} as follows
\begin{align}
|(\y \cdot \nabla \varphi^*, \rho )| & \leq \| \y \|_{\L^4} \| \nabla \varphi^* \|_{\L^4} \| \rho \| \no \\
& \leq \frac{\nu}{10} \| \nabla \y \|^2 + C \| \nabla \varphi^* \|^2_{L^4} \| \rho \|^2, \label{82}
\end{align}
Observe that, 
\begin{align*}
\int_\Omega (\u^* \cdot \nabla \rho) \rho \ \d x = \int_\Omega \u^* \cdot \nabla\left(\frac{\rho^2}{2} \right) \d x = - \int_\Omega (\textrm{div}(\u^*)) \frac{\rho^2}{2} =0,
\end{align*}
\begin{align}
|(\z \cdot \nabla \xi, \rho)| &\leq \| \z \|_{\L^4} \| \nabla \xi\| \| \rho \|_{\L^4} \no  \\
& \leq C_\Omega \| \nabla \z \| \| \nabla \xi \|(\|\nabla \rho\| + \|\rho\|) \no \\
& \leq \frac{C_0}{10} \| \nabla \rho \|^2 + C \| \nabla \z \|^2 \| \nabla \xi \|^2 + \frac{1}{2}\|\rho\|^2 + C\| \nabla \z \|^2 \| \nabla \xi \|^2 \no \\
& \leq \frac{C_0}{10} \| \nabla \rho \|^2 +\frac{1}{2}\|\rho\|^2+ C \| \nabla \z \|^2 \| \nabla \xi \|^2, \label{83}
\end{align}
From \eqref{74}, using \eqref{H3} we can write
\begin{align} \label{75}
&(\Delta (a \rho-\J*\rho+\F'(\bar{\varphi}) - \F'(\varphi^*)-\F''(\varphi^*) \psi), \rho )\hspace{4cm} \no \\
  &=-(\nabla (a \rho-\J*\rho+\frac{1}{2}\F'''(\theta \bar{\varphi} + (1-\theta) \varphi^*)\xi^2 + \F''(\varphi^*) \rho), \nabla \rho )\no \\
  &=-(a \nabla \rho + \rho \nabla a,\nabla \rho )+( \nabla \J * \rho , \nabla \rho) -(\F'''(\theta \bar{\varphi} + (1-\theta) \varphi^*)\xi \nabla \xi, \nabla \rho)  \no \\
  & \quad - (\frac{1}{2}\F^{(4)}(\theta \bar{\varphi} + (1-\theta) \varphi^*)((\theta \nabla \bar{\varphi} + (1-\theta)\nabla \varphi^*))\xi^2 , \nabla \rho)- (\F'''(\varphi^*) \nabla \varphi^* \rho, \nabla \rho)\no \\
  & \quad - (\F''(\varphi^*) \nabla\rho, \nabla \rho ) \no \\
  & \leq -C_0 \|\nabla \rho\|^2-(\rho \nabla a,\nabla \rho )+( \nabla \J * \rho , \nabla \rho)  -(\F'''(\theta \bar{\varphi} + (1-\theta) \varphi^*)\xi \nabla \xi, \nabla \rho) \no \\
  & \quad - (\frac{1}{2}\F^{(4)}(\theta \bar{\varphi} + (1-\theta) \varphi^*)((\theta \nabla \bar{\varphi} + (1-\theta)\nabla \varphi^*))\xi^2 , \nabla \rho)- (\F'''(\varphi^*) \nabla \varphi^* \rho, \nabla \rho).
\end{align}
Now we estimate right hand terms of \eqref{75} using H\"older and Sobolev inequalities 
\begin{align} \label{144}
|(\rho \nabla a,\nabla \rho )| &\leq \|\nabla a\|_{\infty} \|\rho\| \|\nabla \rho \|\no \\
& \leq \frac{C_0}{10} \|\nabla \rho\|^2 + C \|\rho\|^2,
\end{align}
\begin{align} \label{145}
|( \nabla \J * \rho , \nabla \rho)| &\leq \|\nabla \J\|_{L^1} \| \rho\| \|\nabla \rho\| \no\\
& \leq \frac{C_0}{10} \|\nabla \rho\|^2 + C \|\rho\|^2,
\end{align}
\begin{align} \label{146}
|(\frac{1}{2}\F^{(4)}(\theta \bar{\varphi} + (1-\theta) \varphi^*)((\theta \nabla \bar{\varphi} & + (1-\theta)\nabla \varphi^*))\xi^2 , \nabla \rho)|\no \\
&\leq C_F (\|\nabla\bar{\varphi}\|_{L^4}+ \|\nabla \varphi^*\|_{L^4})\|\xi^2\|_{L^4} \|\nabla \rho\| \no\\
&\leq \frac{C_0}{10} \|\nabla \rho\|^2 + C(\|\nabla\bar{\varphi}\|_{L^4}+ \|\nabla \varphi^*\|_{L^4})^2 \|\xi\|^4_{L^8}\no \\
&\leq \frac{C_0}{10} \|\nabla \rho\|^2 + C(\|\nabla\bar{\varphi}\|^2_{L^4}+ \|\nabla \varphi^*\|_{L^4}^2) \|\xi\|^4_V ,
\end{align}
\begin{align} \label{147}
|(\F'''(\theta \bar{\varphi} + (1-\theta) \varphi^*)\xi \nabla \xi, \nabla \rho)| & \leq C_F \|\xi\|_{L^4}\|\nabla \xi\|_{L^4}\|\nabla \rho\| \no\\
& \leq C_F \|\xi\|_V \|\xi\|_{H^2} \|\nabla \rho\|\no \\
& \leq \frac{C_0}{10} \|\nabla \rho\|^2 + C \|\xi\|_V ^2\|\xi\|_{H^2}^2,
\end{align}
and using Gagliardo-Nirenberg inequality we get
\begin{align} \label{148}
|(\F'''(\varphi^*) \nabla \varphi^* \rho, \nabla \rho)| &\leq C_F \|\nabla \varphi^*\|_{L^4} \|\rho\|_{L^4} \|\nabla \rho\| \no\\
&\leq C \|\nabla \varphi^*\|_{L^4} (\|\rho\|^{1/2}\|\nabla \rho\|^{1/2}+\|\rho\| ) \|\nabla \rho\| \no\\
&\leq \frac{C_0}{10} \|\nabla \rho\|^2 + C \|\nabla \varphi^*\|_{L^4}^2 \|\rho\|^2 +\frac{C_0}{10} \|\nabla \rho\|^2 + \|\nabla \varphi^*\|^4_{L^4}\|\rho\|^2 \no \\
&\leq \frac{C_0}{5} \|\nabla \rho\|^2 + C(\|\nabla \varphi^*\|_{L^4}^2+\|\nabla \varphi^*\|^4_{L^4})\|\rho\|^2,
\end{align}
Substituting \eqref{144}-\eqref{148} in \eqref{75}, we get
\begin{align}
|&(\Delta (a \rho-\J*\rho +\F'(\bar{\varphi}) - \F'(\varphi^*)-\F''(\varphi^*) \psi), \rho )| + \frac{4C_0}{10} \|\nabla \rho\|^2  \no \\
&\leq C(1 + \|\nabla \varphi^*\|^2_{L^4}+ \|\nabla \varphi^*\|^4_{L^4}) \|\rho\|^2 + C(\|\nabla\bar{\varphi}\|^2_{L^4}+ \|\nabla \varphi^*\|_{L^4}^2) \|\xi\|^4_V + C \|\xi\|_V ^2\|\xi\|_{H^2}^2 \label{84}
\end{align}
Using \eqref{82}, \eqref{83} and \eqref{84} in \eqref{73} we get 
\begin{align}
\frac{1}{2} \frac{\d}{\d t} &\| \rho (t) \|^2 + \frac{3C_0}{10}\|\nabla \rho\|^2 \no \\
\leq &\frac{\nu}{10} \|\nabla \y\|^2 + C(1 +\| \nabla \varphi^*\|^2_{L^4} + \| \nabla \varphi^*\|^4_{L^4} ) \|\rho\|^2 + C \|\nabla \z\|^2 \|\nabla \xi\|^2 \no \\
& + C (\|\nabla\bar{\varphi}\|^2_{L^4}+ \|\nabla \varphi^*\|_{L^4}^2) \|\xi\|^4_V+C \|\xi\|_V ^2\|\xi\|_{H^2}^2 \label{86}
\end{align}
Combining \eqref{85} and \eqref{86}, we get
\begin{align}
\frac{1}{2} \frac{\d}{\d t} \| \rho (t) \|^2 +&\frac{\nu}{10} \|\nabla \y\|^2 + \|\y\|^2 + \frac{3C_0}{10}\|\nabla \rho\|^2 \no  \\
&\leq  C(1 +\| \nabla \varphi^*\|^2_{H^1} + \| \nabla \varphi^*\|^4_{H^1} + \|\varphi^*\|^2_{H^1}) \|\rho\|^2 + C \|\nabla \z\|^2 \|\nabla \xi\|^2 \no \\
& + C (\|\nabla\bar{\varphi}\|^2_{L^4}+ \|\nabla \varphi^*\|_{L^4}^2) \|\xi\|^4_V+C \|\xi\|_V ^2\|\xi\|_{H^2}^2 \no \\
 &+ C \|\xi\|^4_{V} \| \nabla \varphi^*\|^2_{L^4} 
+ C \|\xi\|_V^4 \label{134}
\end{align}
By Gronwall's lemma (differential form) we get
\begin{align*}
\|\rho (t)\|^2 \leq \exp \left(C\int_0^t (1 +\| \nabla \varphi^*\|^2_{L^4} + \| \nabla \varphi^*\|^4_{L^4} + \|\varphi^*\|^2_{V})\d s \right) \left(\int_0^T\alpha(t)\d t \right)
\end{align*} 
where $\alpha(t) = C \|\nabla \z\|^2 \|\nabla \xi\|^2 
 + C (\|\nabla\bar{\varphi}\|^2_{L^4}+ \|\nabla \varphi^*\|_{L^4}^2) \|\xi\|^4_V+C \|\xi\|_V ^2\|\xi\|_{H^2}^2+ C \|\xi\|^4_{V} \| \nabla \varphi^*\|^2_{L^4} 
+ C \|\xi\|_V^4$. \\
Using the bounds \eqref{bound1} and \eqref{bound2} we can show that 
\begin{align*}
\int_0^T \alpha (t) \d t \leq \|\U\|^4_{\L^2(0,T;\V_{\mathrm{div}}')}
\end{align*}
which implies
\begin{align*}
\|\rho (t)\|^2 \leq \|\U\|^4_{\L^2(0,T;\V_{\mathrm{div}}')}
\end{align*}
Integrating \eqref{134} we get
\begin{align*}
\|\rho\|^2_{L^2(0,T;V)} + \|\u \|^2_{L^2(0,T;\V_ \mathrm{div})} \leq C \|\U\|^4_{\L^2(0,T;\V_{\mathrm{div}}')}
\end{align*}
which gives 
\begin{align*}
\|\rho\|^2_{L^\infty(0,T;H)}+\|\rho\|^2_{L^2(0,T;V)} + \|\u \|^2_{L^2(0,T;\V_ \mathrm{div})} \leq C \|\U\|^4_{\L^2(0,T;\V_{\mathrm{div}}')}
\end{align*}
Hence using the inclusions $\mathcal{U} \subset L^\infty(0,T;\G_{\mathrm{div}}) \subset L^2(0,T; \V'_{\mathrm{div}})$, as $\|\U\|_\mathcal{U}\rightarrow 0$
\begin{align*}
\frac{\|S(\U^*+\U)-S(\U^*)-[\w,\psi] \|_\mathcal{V} }{\|\U\|_\mathcal{U}} \leq C\|\U\|_\mathcal{U}\rightarrow 0.
\end{align*}
Hence the proof of the theorem.
\end{proof}

\section{Characterisation of Optimal Control} \label{adjoint}
In this section, we derive the variational inequality satisfied by the optimal control.  Further, we introduce the adjoint system \eqref{ad2}-\eqref{ad18} and discuss its solvability. Finally, we characterize the optimal control in terms of adjoint variables by eliminating the linearised variables $[\psi,\w]$ from \eqref{variational}.  
\subsection{First order optimality condition}
We prove the following theorem using the result of  Theorem \ref{frech}.
\begin{theorem}
Suppose that the hypothesis (\textbf{H1})-(\textbf{H5})  and (\textbf{H7}) are satisfied. Let us assume that $U^* \in \mathcal{U}_{ad}$ is an optimal control for (OCP) such that $S(U^*)=[ \varphi^*,\u^*]$ . Then optimal triplet satisfies 
\begin{align}
\int_0^T \int_\Omega (\varphi^*-\varphi_d) \psi \, dxdt +\int_0^T \int_\Omega (\u^*-\u_d)\cdot \w \, dxdt + \int_\Omega (\varphi^*(T)-\varphi_\Omega) \psi (T) dx \no  \\
+ \int_0^T \int_\Omega \U^* \cdot (\U - \U^*) dx dt \ge 0, \quad \forall \U \in \mathcal{U}_{ad}. \label{variational}
\end{align}
where $[\psi,\w]$ is the unique weak solution of the linearised system \eqref{2}-\eqref{lin1} but replacing $\U$ with $\U-\U^*$ in \eqref{1}.
\end{theorem}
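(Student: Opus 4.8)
The plan is to use the convexity of $\mathcal{U}_{ad}$ together with the Fr\'echet differentiability of the control-to-state map established in Theorem \ref{frech}, and then apply the chain rule to the reduced cost functional. Define the reduced functional $j(\U) := \mathcal{J}(S(\U),\U)$ on $\mathcal{U}_{ad}$. By definition of (OCP), $\U^*$ minimizes $j$ over $\mathcal{U}_{ad}$. Since $\mathcal{U}_{ad}$ is convex, for any $\U \in \mathcal{U}_{ad}$ and any $\lambda \in (0,1]$ the element $\U^*_\lambda := \U^* + \lambda(\U - \U^*)$ again belongs to $\mathcal{U}_{ad}$, so that
\begin{align*}
\frac{j(\U^*_\lambda) - j(\U^*)}{\lambda} \ge 0 .
\end{align*}
First I would let $\lambda \to 0^+$ in this quotient. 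The existence of the limit, namely the directional derivative $j'(\U^*)(\U-\U^*)$, follows by the chain rule: $\mathcal{J}$ is a continuous quadratic functional on $\mathcal{V}\times\mathcal{U}$, hence Fr\'echet differentiable, and $S$ is Fr\'echet differentiable from $\mathcal{U}$ to $\mathcal{V}$ by Theorem \ref{frech}, with $S'(\U^*)(\U-\U^*) = (\psi,\w)$ the unique weak solution (Theorem \ref{thmlin}) of the linearised system \eqref{2}-\eqref{lin1} in which $\U$ is replaced by $\U-\U^*$.

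Next I would compute $j'(\U^*)(\U-\U^*)$ term by term. Differentiating the four contributions in \eqref{cost}: the term $\int_0^T \|\varphi - \varphi_d\|^2\,\d t$ contributes $2\int_0^T (\varphi^* - \varphi_d,\psi)\,\d t$; the term $\int_0^T \|\u - \u_d\|^2\,\d t$ contributes $2\int_0^T (\u^* - \u_d,\w)\,\d t$; the terminal term $\int_\Omega |\varphi(\cdot,T)-\varphi_\Omega|^2\,\d x$ contributes $2\int_\Omega (\varphi^*(T)-\varphi_\Omega)\,\psi(T)\,\d x$, which is legitimate because $\varphi^*,\psi \in C([0,T];H)$ so the evaluation at $t=T$ is a bounded linear map on $\mathcal{V}$; and finally $\int_0^T \|\U\|^2\,\d t$ contributes $2\int_0^T(\U^*,\U-\U^*)\,\d t$. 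Summing these and dividing the resulting inequality $j'(\U^*)(\U-\U^*)\ge 0$ by $2$ yields exactly \eqref{variational}, for every $\U \in \mathcal{U}_{ad}$.

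The main point requiring care is the passage to the limit in the difference quotient, i.e.\ verifying that $j$ is indeed (Gateaux) differentiable at $\U^*$ along admissible directions and that the remainder is $o(\lambda)$ in the relevant norms. This rests entirely on the estimate from Theorem \ref{frech}, which controls $\|S(\U^*+\U)-S(\U^*)-[\w,\psi]\|_{\mathcal{V}}$ by $C\|\U\|_{\mathcal{U}}^2$, combined with the Lipschitz continuity of $S$ from Section \ref{linear}; once these are in hand the chain rule computation above is routine, and the continuity of the trace $\mathcal{V}\ni(\varphi,\u)\mapsto \varphi(T)\in H$ takes care of the terminal observation. I expect no genuine obstruction beyond bookkeeping, since all the hard analytic work (existence of the optimal control, solvability of the linearised system, and Fr\'echet differentiability) has already been carried out.
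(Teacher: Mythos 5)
Your proposal is correct and follows essentially the same route as the paper: the paper also forms the reduced functional $\mathrm{G}(\U)=\mathcal{J}(S(\U),\U)$, invokes the standard variational inequality $\mathrm{G}'(\U^*)(\U-\U^*)\ge 0$ on the convex set $\mathcal{U}_{ad}$ (citing Lemma 2.21 of \cite{fredi}, which is exactly your difference-quotient argument), and then applies the chain rule with $S'(\U^*)(\U-\U^*)=[\psi,\w]$ from Theorem \ref{frech} to expand the derivative of the quadratic cost term by term. The only cosmetic difference is your explicit factor of $2$ (which divides out of the inequality) and your remark on the continuity of the trace at $t=T$, both of which the paper leaves implicit.
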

\begin{proof}
Let us denote $\mathrm{G} (\U) = \mathcal{J}(S(\U), \U)$ for all $ \U \in \mathcal{U}_{ad}$. Since $\mathcal{U}_{ad}$ is a convex set, for any minimiser $\U^* \in \mathcal{U}_{ad}$ of $\mathcal{J}$, we have from Lemma 2.21 in \cite{fredi} that 
\begin{align*}
\mathrm{G}'(\U^*) (\U-\U^*) \ge 0, \forall \U \in \mathcal{U}_{ad}.
\end{align*}
where $\mathrm{G}'$ is Fr\'echet derivative. Since $\mathcal{J}$ is in the quadratic functional form, using chain rule we can write the Fr\'echet derivative of $\mathrm{G}$  at every $\U^* \in \mathcal{U}$ as follows 
\begin{align*}
\mathrm{G}'(\U^*) = \mathcal{J}^\prime_{[\varphi^*,\u^*]}(S(\U^*), \U^*) \circ S'(\U^*) + \mathcal{J}'_{\U^*} (S(\U^*), \U^*)
\end{align*}
 and Gateaux derivative of $\mathcal{J}$ in the direction of $( h_1, \h_2$ can be written as
\begin{align*}
\mathcal{J}^\prime_{[\varphi^*,\u^*]}( \varphi^*,\u^*,\U^* )(h_1,\h_2) = \int_0^T \int_\Omega (\varphi^* - \varphi_d)h_1 dxdt+\int_0^T \int_\Omega (\u^* - \u_d) \cdot \h_2 dxdt \\
+ \int_\Omega (\varphi^*(T)-\varphi_\Omega) h_1(T) dx 
\end{align*}
 for all $[h_1,\h_2] \in \mathcal{V}$ and 
\begin{align*}
\mathcal{J}^\prime_{\U^*} (\varphi^*,\u^*, \U^*) (\W)= \int_0^T \int_\Omega \U^* \cdot \W dxdt \quad \forall \W \in \mathcal{U}. 
\end{align*}
Using the fact from Theorem \ref{frech} that $S'(\U^*)(\U-\U^*) = [\psi, \w]$ we get that, 
\begin{align*}
0 \le& ( \mathcal{J}'_{[\varphi^*,\u^*]}(S(\U^*), \U^*) \circ S'(\U^*) + \mathcal{J}'_{\U^*} (S(\U^*), \U^*), \U - \U^*)\\
 =& \int_0^T \int_\Omega (\varphi - \varphi_d)\psi dxdt 
+ \int_0^T \int_\Omega (\u^* - \u_d) \cdot \w dxdt + \int_\Omega (\varphi(T)-\varphi_\Omega) \psi(T) dx \\
&+\int_0^T \int_\Omega \U^* \cdot (\U-\U^*)dxdt.
\end{align*}
Hence follows \eqref{variational}.
\end{proof}
\subsection{Adjoint system}
	
%
Consider the adjoint system corresponding to the optimal control $\U^*$ and corresponding state $(\varphi^*,\u^*)$ 
	\begin{align}
	-\eta_t + \v \cdot \nabla a \varphi^*+ \J*(\v \cdot \nabla \varphi^*) & - (\nabla \J * \varphi^*) \cdot \v - \u^* \cdot \nabla \eta \no \\ 
	-a \Delta \eta + \nabla \J * \nabla \eta - \F''(\varphi^*) \Delta \eta & = \varphi^* - \varphi_d, \label{ad2}\\
	-\nu \Delta \v+\v + \eta \nabla \varphi^* + \nabla q &= \u^*-\u_d,  \label{ad1}\\
	\text{div} \, (\v) & =0, \\
		\v \cdot \n |_{\partial \Omega} = \frac{\partial \eta }{\partial \n}|_{\partial \Omega}& = 0,	\\
	 \eta(T, \cdot) & = \varphi^* (T) - \varphi_\Omega. \label{ad18}
	\end{align}
The weak formulation of the system \eqref{ad1}-\eqref{ad18} is  as follows
\begin{align}
 -_{V'}\langle \eta_t, \chi \rangle_{V} &+ (\v \cdot \nabla a \varphi^*,\chi )+ (\J*(\v \cdot \nabla  \varphi^*),\chi ) - ((\nabla \J  * \varphi^*) \cdot  \v, \chi)  \no\\
- (\u^* \cdot & \nabla \eta, \chi)-(a \Delta \eta,\chi )+ (\nabla \J * \nabla \eta ,\chi)- (\F''(\varphi^*) \Delta \eta, \chi )  = (\varphi^* - \varphi_d, \chi) ,\label{ad20}\\
&\nu(\nabla \v, \nabla \z) +(\v,\z) +(\eta \nabla \varphi^*, \z)= (\u^*-\u_d, \z). \label{ad19}
\end{align}
for every $\chi \in V$ and $\z \in \V_{\mathrm{div}}$ for every $t \in [0,T]$.
\begin{theorem}
Assume that the hypothesis (\textbf{H1})-(\textbf{H5}) and (\textbf{H7}) are satisfied. Then the adjoint system \eqref{ad1}-\eqref{ad18} has a unique weak solution $[\v, \eta]$ satisfying 
\begin{align*}
&\eta \in L^\infty(0,T;H) \cap L^2(0,T;V)\cap H^1(0,T; V'),\\
&\v \in L^2(0,T; \V_{\mathrm{div}}).
\end{align*}
\end{theorem}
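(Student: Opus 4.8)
The plan is to reproduce the Faedo--Galerkin argument of Theorem \ref{thmlin}, the only genuine new feature being that \eqref{ad1}--\eqref{ad18} is posed \emph{backward} in time and has coefficients built from the strong solution $(\varphi^*,\u^*)$. First I would perform the change of variable $t\mapsto T-t$, turning the system into a forward one with initial datum $\eta(0)=\varphi^*(T)-\varphi_\Omega\in H$; by Theorem \ref{strongsol} the (time-reversed) pair $(\varphi^*,\u^*)$ still satisfies $\varphi^*\in L^\infty(0,T;H^2(\Omega))\hookrightarrow L^\infty(\Omega\times(0,T))$, $\nabla\varphi^*,\mu^*,\nabla\mu^*\in L^\infty(0,T;L^p(\Omega))$ for all $p<\infty$ by \eqref{39}, and $\u^*\in L^\infty(0,T;\H^2)$. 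I would then project onto the eigenspaces $\Psi_n$ and $\mathcal V_n$ of $-\Delta+I$ (Neumann) and of the Stokes operator used above, write $\eta_n=\sum a_i^{(n)}\eta_i$, $\v_n=\sum b_i^{(n)}\v_i$, and solve the resulting linear system of ODEs by Cauchy--Lipschitz; linearity in $(\eta,\v)$ means no stopping time is needed and local solutions are global once the a priori bound below is established.

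For the a priori estimate I would test the Galerkin $\eta$-equation with $\eta_n$ and the $\v$-equation with $\v_n$. Integrating by parts the three second-order terms gives $-(a\Delta\eta_n,\eta_n)-(\F''(\varphi^*)\Delta\eta_n,\eta_n)$ equal to $\big((a+\F''(\varphi^*))\nabla\eta_n,\nabla\eta_n\big)+(\eta_n\nabla a,\nabla\eta_n)+(\F'''(\varphi^*)\eta_n\nabla\varphi^*,\nabla\eta_n)$, so (\textbf{H3}) produces the coercive lower bound $C_0\|\nabla\eta_n\|^2$, while the two remainders are absorbed using $\|\nabla a\|_{L^\infty}\le\|\nabla\J\|_{L^1}$, \eqref{39} and $\|\eta_n\|_{L^4}\le C(\|\eta_n\|^{1/2}\|\nabla\eta_n\|^{1/2}+\|\eta_n\|)$. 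The convolution term is handled by Young's inequality, $\|\nabla\J*\nabla\eta_n\|\le\|\nabla\J\|_{L^1}\|\nabla\eta_n\|$ (this uses $\J\in W^{2,1}$), the term $(\u^*\cdot\nabla\eta_n,\eta_n)$ vanishes since $\mathrm{div}\,\u^*=0$, and the three $\v$-dependent lower-order terms are estimated by H\"older together with $\|\v_n\|_{L^4}\le C\|\nabla\v_n\|$ and the boundedness of $\varphi^*,\nabla\varphi^*$. Testing the $\v$-equation with $\v_n$ yields $\nu\|\nabla\v_n\|^2+\|\v_n\|^2\le\frac{\nu}{2}\|\nabla\v_n\|^2+C\|\nabla\varphi^*\|^2_{L^4}\|\eta_n\|^2+C\|\u^*-\u_d\|^2$. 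Adding the two identities and choosing the absorbing constants so the $\|\nabla\eta_n\|^2$ and $\|\nabla\v_n\|^2$ terms on the right are swallowed, one arrives at
\[
\frac{\d}{\d t}\|\eta_n\|^2+\frac{C_0}{2}\|\nabla\eta_n\|^2+\nu\|\nabla\v_n\|^2\le g(t)\,\|\eta_n\|^2+C\big(\|\varphi^*-\varphi_d\|^2+\|\u^*-\u_d\|^2\big),
\]
with $g\in L^1(0,T)$ depending on $\|\nabla\varphi^*\|^2_{L^4}$ and $\|\u^*\|^2_{\H^2}$. Gronwall's lemma then gives uniform bounds for $\{\eta_n\}$ in $L^\infty(0,T;H)\cap L^2(0,T;V)$ and $\{\v_n\}$ in $L^2(0,T;\V_{\mathrm{div}})$, and, reading $\eta_t$ off the equation (all second-order terms after integration by parts are controlled in $V'$ by $\|\nabla\eta_n\|$), a uniform bound for $\{(\eta_n)_t\}$ in $L^2(0,T;V')$.

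With these bounds I would extract subsequences $\eta_n\rightharpoonup\eta$ in $L^2(0,T;V)$ and weakly-$*$ in $L^\infty(0,T;H)$, $(\eta_n)_t\rightharpoonup\eta_t$ in $L^2(0,T;V')$, $\v_n\rightharpoonup\v$ in $L^2(0,T;\V_{\mathrm{div}})$, and by the Aubin--Lions lemma $\eta_n\to\eta$ strongly in $L^2(0,T;H)$, whence $\eta\in C([0,T];H)$. Passing to the limit in the (time-reversed) Galerkin identities, using the strong convergence to treat the products with $\varphi^*,\u^*$, identifies $(\eta,\v)$ as a weak solution of \eqref{ad20}--\eqref{ad19}; undoing the change of variable restores the stated regularity and the terminal condition $\eta(T)=\varphi^*(T)-\varphi_\Omega$. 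Uniqueness follows from linearity: the difference of two solutions solves the homogeneous system, and the energy estimate above with zero data and zero initial value forces it to vanish via Gronwall. The main obstacle I anticipate is the coercivity step, namely controlling the non-constant-diffusion terms $a\Delta\eta+\F''(\varphi^*)\Delta\eta$ and the commutator $(\F'''(\varphi^*)\eta\nabla\varphi^*,\nabla\eta)$ they generate: absorbing the latter into $C_0\|\nabla\eta\|^2$ relies precisely on $\nabla\varphi^*\in L^\infty(0,T;L^4(\Omega))$, which is available only because $(\varphi^*,\u^*)$ is a \emph{strong} solution in the sense of Theorem \ref{strongsol}, and the term $\nabla\J*\nabla\eta$ is what forces the hypothesis $\J\in W^{2,1}(\mathbb{R}^d)$.
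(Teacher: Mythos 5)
Your proposal matches the paper's proof in all essentials: a Faedo--Galerkin scheme as in Theorem \ref{thmlin}, the same integration by parts turning $-(a\Delta\eta,\eta)-(\F''(\varphi^*)\Delta\eta,\eta)$ into a coercive term via (\textbf{H3}) plus remainders absorbed using $\nabla\varphi^*\in L^\infty(0,T;L^4)$, the vanishing of $(\u^*\cdot\nabla\eta,\eta)$, Gronwall applied backward in time (the paper integrates over $(t,T)$ rather than reversing time, which is equivalent), recovery of $\eta_t\in L^2(0,T;V')$ from the equation, and uniqueness by rerunning the energy estimate on the difference of two solutions. The only quibble is your claim that the term $\nabla\J*\nabla\eta$ forces $\J\in W^{2,1}$: the bound $\|\nabla\J*\nabla\eta\|\le\|\nabla\J\|_{L^1}\|\nabla\eta\|$ needs only $\J\in W^{1,1}$ as in (\textbf{H2}), exactly as the paper uses in its estimate of $(\nabla\J*\nabla\eta,\eta)$.
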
 
\begin{proof}
We can prove the existence of a weak solution using Faedo-Galerkin approximation as in the proof of Theorem \ref{thmlin}.
We derive the basic estimates that a weak solution should satisfy. Let us take $\chi =\eta$ and $\z=\v$ in \eqref{ad20} and \eqref{ad19} respectively. This leads to
	\begin{align}
		  -\frac{1}{2} \frac{d}{dt}\|\eta\|^2 + (\v \cdot \nabla a \varphi^* , \eta) + (\J*(\v \cdot \nabla \varphi^*),\eta)& - ((\nabla \J * \varphi^*) \cdot \v,\eta) - (\u^* \cdot \nabla \eta, \eta) , \no  \\
	-(a \Delta \eta ,\eta)+ (\nabla \J * \nabla \eta,\eta) &- (\F''(\varphi^*) \Delta \eta ,\eta)   = (\varphi^* - \varphi_d,\eta), \label{ad6} \\
	\nu \|\nabla \v \|^2 + \|\v\|^2 + (\eta \nabla \varphi^*, \v )&= (\u^*-\u_d, \v), \label{ad5}
	\end{align}
Terms in \eqref{ad6} and \eqref{ad5} can be estimated using Gagliardo-Nirenberg and Holder inequalities as follows 
\begin{align}
|(\v \cdot \nabla a \varphi^* , \eta)| &\leq C_J \|\v\|_{L^4} \|\varphi^*\|_{L^4} \|\eta\|  \no \\
& \leq \frac{\nu}{10} \|\nabla \v\|^2 + C \|\varphi^*\|^2_{L^4} \|\eta\|^2, \label{ad7}
\end{align}
\begin{align}
|(\J*(\v \cdot \nabla \varphi^*),\eta)| &\leq C_J \|\v\|_{L^4} \|\nabla \varphi^*\|_{L^4} \|\eta\| \no \\
&\leq \frac{\nu}{10} \|\nabla \v\|^2 + C\|\nabla \varphi^*\|^2_{L^4} \|\eta\|^2 , \label{ad8}
\end{align}
\begin{align}
|((\nabla \J * \varphi^*) \cdot \v,\eta)| &\leq C_J \|\varphi^*\|_{L^4} \|\v\|_{L^4} \|\eta\| \no \\
&\leq \frac{\nu}{10} \|\nabla \v \|^2 + C \|\varphi^*\|^2_{L^4} \|\eta\|^2, \label{ad9}
\end{align}
Using integration by parts and divergence free condition we get
\begin{align}
|(\u \cdot \nabla \eta, \eta)=\int_\Omega \u \left(\frac{\eta^2}{2}\right)' = \int_\Omega \mathrm{div} (\u) \frac{\eta^2}{2}=0, \label{ad10} 
\end{align}
Observe that by integration by parts we get
	\begin{align}
	-(a \Delta \eta ,\eta) - (\F''(\varphi^*) \Delta \eta ,\eta) &= (a + \F''(\varphi^*) \nabla \eta,\nabla \eta) +( \nabla a, \eta \nabla \eta) + (\F'''(\varphi^*) \nabla \varphi^*, \eta \nabla \eta) \no \\
	&\geq C_0 \|\nabla \eta\|^2 + ( \nabla a, \eta \nabla \eta) + (\F'''(\varphi^*) \nabla \varphi^*, \eta \nabla \eta), \label{ad24}
	\end{align}
Right hand side terms of \eqref{ad24} are estimated as follows 	
	\begin{align}
	|(\nabla a, \eta \nabla \eta)| &\leq C_J \|\eta\| \|\nabla \eta\|
	 \leq\frac{C_0}{6} \|\nabla \eta\|^2 + C_J \|\eta\|^2, \label{ad11}
	\end{align}
\begin{align}
	|(\F'''(\varphi^*) \nabla \varphi^*, \eta \nabla \eta)| & \leq C_F \|\nabla \varphi^*\|_{L^4} \|\eta\|_{L^4}\|\nabla \eta \|\no \\
& \leq C_F \|\nabla \varphi^*\|_{L^4}(\|\eta\|^{1/2} \|\nabla \eta\|^{1/2} + \|\eta\| )\|\nabla \eta \|\no \\
& \leq \frac{C_0}{12} \|\nabla \eta\|^2 + \|\nabla \varphi^*\|^4_{L^4} \|\eta\|^2+  \frac{C_0}{12} \|\nabla \eta\|^2 + \|\nabla \varphi^*\|^2_{L^4} \|\eta\|^2 \no\\
& \leq \frac{C_0}{6} \|\nabla \eta\|^2 + (\|\nabla \varphi^*\|^4_{L^4}+\|\nabla \varphi^*\|^2_{L^4})\|\eta\|^2, \label{ad12}
\end{align}
Substituting \eqref{ad11} and \eqref{ad12} in \eqref{ad24} we get 
\begin{align} \label{ad25}
-(a \Delta \eta ,\eta) - (\F''(\varphi^*) \Delta \eta ,\eta) \geq \frac{2C_0}{3} \|\nabla \eta\|^2 -C \|\eta\|^2 - (\|\nabla \varphi^*\|^4_{L^4}+\|\nabla \varphi^*\|^2_{L^4})\|\eta\|^2.
\end{align}
\begin{align}
|(\nabla \J * \nabla \eta,\eta)| &\leq C_J \|\nabla \eta \| \|\eta\| \no\\
& \leq \frac{C_0}{6} \|\nabla \eta\|^2 + C_J \|\eta\|^2, \label{ad13}
\end{align}
\begin{align}
|(\varphi - \varphi_d,\eta)| &\leq \|\varphi - \varphi_d\| \|\eta \| \no \\
&\leq \frac{1}{2} \|\varphi-\varphi_d\|^2 + \frac{1}{2}\|\eta\|^2.
\end{align}
Using \eqref{ad7}, \eqref{ad8}, \eqref{ad9}, \eqref{ad10}, \eqref{ad25}, \eqref{ad13} in \eqref{ad6} we get 
\begin{align}
-\frac{1}{2} \frac{d}{dt} \|\eta\|^2 &\leq \frac{3\nu}{10} \|\nabla \v\|^2  - \frac{C_0}{2} \|\nabla \eta\|^2 + C(1+ \|\varphi^*\|^2_{L^4} + \|\nabla\varphi^*\|^2_{L^4}\no\\
& \quad +\|\nabla\varphi^*\|^4_{L^4}  )\|\eta\|^2 +\frac{1}{2} \|\varphi^*-\varphi_d\|^2. \label{ad15}
\end{align}
For the right hand side of \eqref{ad5} we have the estimates 
\begin{align}
	|(\eta \nabla \varphi^*, \v )| &\leq \|\eta\| \|\nabla \varphi^*\|_{L^4} \|\v\|_{L^4} \no \\
	&\leq \|\eta\| \|\nabla \varphi^*\|_{L^4} \|\nabla \v\| \no \\
	&\leq \frac{\nu}{10} \|\nabla \v\|^2 + \|\eta\|^2 \|\nabla \varphi^*\|^2_{L^4}, \label{ad3}
	\end{align}
	and
\begin{align}
|(\u-\u_d,\v)| &\leq \|\u^* -\u_d\| \|\v\| \no \\
& \leq C_\Omega \|\u^* -\u_d\| \|\nabla \v\| \no \\
& \leq \frac{\nu}{10} \|\nabla \v\|^2 + C \|\u^*-\u_d\|^2. \label{ad4}
\end{align}	
Using \eqref{ad3} and \eqref{ad4} in \eqref{ad5} we get 
\begin{align}
\frac{4\nu}{5} \|\nabla \v \|^2 + \|\v\|^2 \leq C \|\u^*-\u_d\|^2+\|\eta\|^2 \|\nabla \varphi^*\|^2_{L^4}\label{ad14}
\end{align}
Combining \eqref{ad15} and \eqref{ad14} we get 
\begin{align}
&-\frac{1}{2} \frac{d}{dt} \|\eta\|^2 +\frac{\nu}{5} \|\nabla \v\|^2+\|\v\|^2 +\frac{C_0}{2} \|\nabla \eta\|^2 \no \\
&\ \leq C(1+ \|\varphi^*\|^2_{L^4} + \|\nabla\varphi^*\|^2_{L^4}+\|\nabla\varphi^*\|^4_{L^4} )\|\eta\|^2 
+ C (\|\u^*-\u_d\|^2 + \|\varphi^*-\varphi_d\|^2). \label{ad26}
\end{align}
Integrating \eqref{ad26} over $(t,T)$, we get 
\begin{align}
\|\eta(t)\|^2 + \frac{\nu}{5} \int_t^T & \|\nabla \v(s)\|^2 + \int_t^T \|\v(s)\|^2 +\frac{C_0}{2} \int_t^T\|\nabla \eta (s)\|^2 \no \\
&\leq \|\eta(T)\|^2 + C\int_t^T \alpha (s)\|\eta(s)\|^2 ds+C \int_t^T \beta(s) ds, \label{ad17}
\end{align}
where $\alpha(t)=1+ \|\varphi^*\|^2_{L^4} + \|\nabla\varphi^*\|^2_{L^4}+\|\nabla\varphi^*\|^4_{L^4}$ and $\beta(t)=\|(\u^*-\u_d)(t)\|^2 + \|(\varphi^*-\varphi_d)(t) \|^2$.
Using classical Gronwall's inequality 
\begin{align}
\|\eta(t)\|^2 \leq \left[\|\eta(T)\|^2 +C \int_0^T \beta(s) ds\right]\exp \left(C\int_0^T \alpha(s)ds\right). \label{ad16}
\end{align}
Since $\alpha \in L^1(0,T)$ we have that
\begin{align*}
\eta \in L^\infty(0,T; H).
\end{align*}
Using \eqref{ad16} in \eqref{ad17} we get that
\begin{align} \label{ad21}
\v \in L^2(0,T; \V_{\mathrm{div}}), \quad \eta \in L^2(0,T;V). 
\end{align}
In fact, from the estimate
\begin{align*}
\nu \|\nabla \v \|^2 + \|\v\|^2 &=- (\eta \nabla \varphi^*, \v )+(\u^*-\u_d, \v) \\
&\leq C(\|\eta\| \|\nabla \varphi^*\|_{L^4}  + \|\u^* - \u_d\|) \|\nabla \v\|,
\end{align*}
we get 
\begin{align*}
\v \in L^\infty (0,T; \V_{\mathrm{div}}).
\end{align*}
From \eqref{ad20} we also have the following estimate,
\begin{align*}
\| \eta_t\|_{V'} \leq C(\|\varphi^*\| \|\nabla \v\|+\|\nabla \v\| \|\nabla \varphi^*\|+\|\nabla \u^*\| \|\nabla \eta\| +  \|\nabla \eta\| +\| \varphi^* - \varphi_d\| ),
\end{align*}
from which we can deduce, using \eqref{ad21} that 
\begin{align*}
\eta_t \in L^2(0,T;V').
\end{align*}
To prove the uniqueness of the system \eqref{ad2}-\eqref{ad18}, consider two solutions $[\eta_1, \v_1]$ and $[\eta_2, \v_2]$ of the system \eqref{ad2}-\eqref{ad18}. Denoting $\eta=\eta_1-\eta_2, \v=\v_1-\v_2$ and $q=q_1-q_2$, we get
\begin{align}
-\eta_t + \v \cdot \nabla a \varphi^*&+ \J*(\v \cdot \nabla \varphi^*) - (\nabla \J * \varphi^*) \cdot \v - \u^* \cdot \nabla \eta & \no  \\
	-a \Delta \eta + \nabla \J * \nabla \eta - \F''(\varphi^*) \Delta \eta  &= 0, \label{ad27} \\
	-\nu \Delta \v+\v + \eta \nabla \varphi^* + \nabla q &=0, \label{ad28}\\
	\text{div} \, (\v) & =0, \\
		\v \cdot \n |_{\partial \Omega} = \frac{\partial \eta }{\partial \n}|_{\partial \Omega}& = 0,	\\
	 \eta(T, \cdot) & = \varphi^* (T) - \varphi_\Omega.
\end{align}
Taking inner product of \eqref{ad27} and \eqref{ad28} with $\eta$ and $\v$ respectively and recalculating  same estimates above we conclude that the solution to the system \eqref{ad2}-\eqref{ad18} is unique.
\end{proof}
Using the adjoint system \eqref{ad2}-\eqref{ad18}, now we can remove $\psi, \w$ from \eqref{variational}. We have the following lemma.
\begin{lemma}
Suppose (\textbf{H1})-(\textbf{H5}) and (\textbf{H7}) are satisfied. Let $\U^* \in \mathcal{U}_{ad}  $ be an optimal control for (OCP) with corresponding solution $[\varphi^*, \u^*]$ and the solution of adjoint system $[\eta, \v]$. Then we have the following variational inequality.
\begin{align*}
\int_0^T \int_\Omega (\v + \U^*)\cdot(\U - \U^*) \geq 0, \quad \forall \U \in \mathcal{U}_{ad}.
\end{align*}
\end{lemma}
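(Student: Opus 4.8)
The plan is to remove the linearised variables $[\psi,\w]$ from the variational inequality \eqref{variational} by pairing the adjoint system against $[\psi,\w]$ and the linearised system against $[\eta,\v]$, which should produce the duality identity
\begin{align*}
\int_0^T(\v,\U-\U^*)\,\d t = \int_0^T(\varphi^*-\varphi_d,\psi)\,\d t &+ \int_0^T(\u^*-\u_d,\w)\,\d t \\
&+ \int_\Omega(\varphi^*(T)-\varphi_\Omega)\,\psi(T)\,\d x .
\end{align*}
Its right-hand side is exactly the sum of the first three terms appearing in \eqref{variational}, so inserting the identity into \eqref{variational} would give $\int_0^T(\v+\U^*,\U-\U^*)\,\d t\ge 0$ for all $\U\in\mathcal U_{ad}$, i.e.\ the asserted inequality. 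Here $[\psi,\w]$ denotes the weak solution of the linearised system \eqref{2}--\eqref{lin1} with $\U$ replaced by $\U-\U^*$, and $[\eta,\v]$ is the adjoint solution constructed in the preceding theorem.

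To obtain the identity I would first take the weak adjoint equation \eqref{ad20} with $\chi=\psi(t)$, integrate over $(0,T)$, and integrate $-\int_0^T\langle\eta_t,\psi\rangle\,\d t$ by parts in time; since $\psi(0)=0$ and $\eta(T)=\varphi^*(T)-\varphi_\Omega$, this yields $-\int_\Omega(\varphi^*(T)-\varphi_\Omega)\psi(T)\,\d x+\int_0^T\langle\psi_t,\eta\rangle\,\d t$, and the last pairing can be rewritten through the linearised equation \eqref{2}. Simultaneously I would take \eqref{ad19} with $\z=\w(t)$ and the linearised momentum equation \eqref{1} (with control $\U-\U^*$) tested with $\v(t)$, the pressure terms dropping because $\mathrm{div}\,\v=\mathrm{div}\,\w=0$. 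Adding everything, I expect all coefficient-dependent terms to cancel in pairs: the transport terms $(\u^*\cdot\nabla\psi,\eta)$ and $(\u^*\cdot\nabla\eta,\psi)$ match by integration by parts and $\mathrm{div}\,\u^*=0$; the Stokes pairings $\nu(\nabla\w,\nabla\v)$ and $(\w,\v)$ are common to the two momentum equations; $(\Delta\tilde{\mu},\eta)$ is rewritten as $(\tilde{\mu},\Delta\eta)$ using $\partial_{\n}\tilde{\mu}=\partial_{\n}\eta=0$, and its pieces cancel $-(a\Delta\eta,\psi)$, $-(\F''(\varphi^*)\Delta\eta,\psi)$ and $(\nabla\J*\nabla\eta,\psi)$ — for the last one using $\J(x)=\J(-x)$ from (\textbf{H2}) (self-adjointness of the convolution) together with $\partial_{\n}\eta=0$, which give $\J*\Delta\eta=(\nabla\J)*\nabla\eta$; the couplings $(\w\cdot\nabla\varphi^*,\eta)$ and $(\eta\nabla\varphi^*,\w)$ coincide; and finally $(\tilde{\mu}\nabla\varphi^*,\v)+(\mu^*\nabla\psi,\v)$, after expanding $\tilde{\mu}=a\psi-\J*\psi+\F''(\varphi^*)\psi$ and $\mu^*=a\varphi^*-\J*\varphi^*+\F'(\varphi^*)$ and integrating $(\mu^*\nabla\psi,\v)$ by parts, cancels $(\v\cdot\nabla a\,\varphi^*,\psi)+(\J*(\v\cdot\nabla\varphi^*),\psi)-((\nabla\J*\varphi^*)\cdot\v,\psi)$. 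What remains is precisely the displayed identity, which I would then substitute into \eqref{variational}.

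The step I expect to be the main obstacle is this term-by-term bookkeeping of the nonlocal contributions and the associated sign tracking: one must repeatedly use Fubini together with $\J(x)=\J(-x)$ to transfer convolutions off $\psi$, the identity $\nabla(\J*f)=(\nabla\J)*f$, and the boundary conditions to justify every integration by parts — in particular reconciling the $-\J*\psi$ inside $\tilde{\mu}$ and the three $\nabla\J$-terms of \eqref{ad2} with the adjoint diffusion term $\nabla\J*\nabla\eta$. The analytic justification that all these pairings are finite is routine: $[\psi,\w]$ and $[\eta,\v]$ lie in $(C([0,T];H)\cap L^2(0,T;V)\cap H^1(0,T;V'))\times L^2(0,T;\V_{\mathrm{div}})$ by Theorem \ref{thmlin} and the preceding theorem, while $\varphi^*\in L^\infty(0,T;H^2(\Omega))$ and $\u^*\in L^\infty(0,T;\H^2)$ by Theorem \ref{strongsol} and the remark following it, and $\J\in W^{2,1}$, $a\in W^{1,\infty}$, $\F''(\varphi^*),\F'''(\varphi^*)\in L^\infty$ control all the coefficients.
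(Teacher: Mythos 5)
Your proposal is correct and follows essentially the same route as the paper: test the adjoint system with $[\psi,\w]$ and the linearised system (with control $\U-\U^*$) with $[\eta,\v]$, subtract, integrate in time using $\psi(0)=0$ and $\eta(T)=\varphi^*(T)-\varphi_\Omega$, and substitute the resulting duality identity into \eqref{variational}. In fact you spell out the term-by-term cancellations (the nonlocal $\J$-pairings, the transport and Stokes terms) in more detail than the paper, which simply subtracts \eqref{ad22} from \eqref{ad23} and asserts the conclusion.
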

\begin{proof}

Let us take inner product of \eqref{ad2} and \eqref{ad1} with $\psi$ and $ \w$ respectively and add them, where $(\psi,\w)$ is the solution of the linearised system around $( \varphi^*,\u^*)$ with control $\U^*$. We get
\begin{align}
-\langle \eta_t, \psi \rangle + (\v \cdot \nabla a \varphi^*,\psi)+ (\J*(\v \cdot \nabla \varphi^*),\psi) - ((\nabla \J * \varphi^*) \cdot \v,\psi) - (\u^* \cdot \nabla \eta, \psi) \no  \\
	-(a \Delta \eta,\psi ) + (\nabla \J * \nabla \eta ,\psi)- (\F''(\varphi^*) \Delta \eta,\psi)  +
	\nu (\nabla \v, \nabla \w )+(\v, \w) \no \\
	+ (\eta \nabla \varphi^*, \w)  =(\varphi^* - \varphi_d,\psi)+ (\u^*-\u_d,\w) \label{ad22}
\end{align}
Similarly take inner product of \eqref{2} and \eqref{1} with $\eta$ and $ \v$ where $\U$ in the equation for $ \w$ is  replaced with $\U-\U^*$.
\begin{align}
\langle\psi_t, \eta \rangle + (\w \cdot \nabla \varphi^*,\eta) + (\u^* \nabla \psi, \eta)+&\nu (\nabla \w, \nabla \v) +(\w,\v) \no \\
+(\nabla (a \psi - \J * \psi +\F''(\hat{\varphi})\psi), \nabla \eta)& =((a \psi - \J * \psi +\F''(\hat{\varphi})\psi) \nabla \varphi^*, \v)\no \\
 + ((a \varphi - \J * \varphi &+\F'(\varphi^*))\nabla \psi,\v) + (\U-\U^*, \v). \label{ad23}
\end{align}
Subtract \eqref{ad22} from \eqref{ad23} and integrate from $0$ to $T$. Using \eqref{variational} we arrive at 
\begin{align}
\int_0^T \int_\Omega (\v + \U^*)\cdot(\U - \U^*) \geq 0, \quad \forall \U \in \mathcal{U}_{ad}. \label{opcondition}
\end{align}
Since $\mathcal{U}_{ad}$ is a non empty convex closed subset of $\mathcal{U}$, from the first order optimality condition \eqref{opcondition} we can write the optimal control $\U^*$ (see \cite{fredi}), in terms of $\v$, using the projection onto $\mathcal{U}_{ad}$  as
\begin{align*}
\U^* = P_{\mathcal{U}_{ad}} (-\v). 
\end{align*}
Moreover, from the above projection property we can write point wise condition for $\U^*$ as follows
\begin{align*}
\U ^*(x,t) = \max \left\{ \U_1(x,t), \min \left\{ \v(x,t) , \U_2(x,t)  \right\} \right\} 
\end{align*} 
for a.e. $(x,t) \in \Omega \times (0,T)$.

\end{proof}


\bibliographystyle{apa}

\end{document}